\newtheorem{lemma}{Lemma}
\newtheorem{theorem}{Theorem}
\newtheorem{corollary}{Corollary}
\newtheorem{proposition}{Proposition}
\newtheorem{definition}{Definition}
\newcommand{\pn}{P^{(n)}}
\newcommand{\hair}[2]{#1^{(+ #2)}}
\newcommand{\ball}{\mathcal{B}_n}
\newcommand{\oneik}{i=\overline{1,k}}
\newcommand{\inv}[1]{\frac{1}{#1}}
\newcommand{\pfx}{\mathcal{P}_f(X)}
\newcommand{\pfxf}{\pfx \rtimes F}
\newcommand{\suml}{\sum\limits}
\newcommand{\sumr}[3]{\sum\limits_{#1=#2}^{#3}}
\newcommand{\obj}{\textit{object}}
\newcommand{\objs}{\textit{objects}}
\newcommand{\lampprod}{\bigoplus\limits_X A \rtimes G}
\title[Superharmonic functions on the Lamplighter graph]{Superharmonic functions on the Lamplighter graph of Thompson's group $F$}
\author{Maksym Chornyi}\thanks{Graduate student at Northwestern University, supervised by Kate Juschenko.}
\begin{document}

\begin{abstract}
	The goal is to extend a non-standard amenability test for groups, based on random walks and superharmonic functions, to group actions on sets, and to apply it to Thompson's group $F$ using certain properties of extensive amenability. While no conclusive answer regarding the amenability of $F$ is given, the approach is helpful in developing a new potentially useful criterion and testing it on a significant subclass of superharmonic functions.
\end{abstract}

\maketitle

\section{Introduction and definitions}

Amenability of discrete groups has a strong connection to the notion of random walks.

Let $\mu$ be a measure on a group $G$. This measure can be seen as the source of a left random walk on $G$, where the transition probabilities are defined by $p(g,h)=\mu(hg^{-1})$. 

Similarly, if a discrete group $G$ acts on a countable set $X$, the induced random walk is given by transition probabilities $p(x,y)= \sum\limits_{gx=y} \mu(g) $. The summation sign is needed in case several group elements map $x$ to $y$.

\begin{definition}
	Let $\mu$ and $\nu$ be two measures on a discrete group $G$. The convolution of $\mu$ and $\nu$ is defined by $(\mu*\nu)(g) = \sum\limits_{h \in G} \mu(h)\nu(h^{-1}g) $.
\end{definition}

A convolution of two probability measures is always a probability measure, and the operation of taking convolutions is associative in the sense $\mu * (\nu * \rho) = (\mu * \nu) * \rho.$

The {\it $n$-th convolution} of measure $\mu$ is defined by $\mu^{*n} = \underbrace{\mu * \ldots * \mu}_{n \text{ times}}$.

\begin{definition}
	A measure $\mu$ on a group $G$ is called finitely supported if there is a finite set $S \subset G$ such that $\mu(S)=1$.
\end{definition}

\begin{definition}
	A measure $\mu$ on a group $G$ is called generating if $G = \bigcup\limits_{n \ge 0} \emph{supp}(\mu^{*n})$.
\end{definition}

\begin{definition}
	A measure $\mu$ on a group $G$ is called symmetric if $\mu(A) = \mu(A^{-1})$ for any $A \subset G$.
\end{definition}

For a given random walk on a countable set $X$, denote $P_n(x,y) = \sum\limits_{gx=y} \mu^{*n} (g)$, which represents the probability of reaching point $y$ from point $x$ in exactly $n$ steps.

\textbf{Standing assumption.} Unless specified otherwise, further on in this dissertation we assume that all described group actions are transitive and the corresponding measures are finitely supported, generating and symmetric.

\begin{definition}
	Green's function of points $x$ and $p$ of a random walk with a parameter $z$ is given by $G(x,p|z) = \sum \limits _{n=0}^{\infty} P_n(x,p)z^n$.
\end{definition}

For $z=1$, Green's function represents the expected number of times we visit $p$ when starting from $x$. For $z<1$, we can think of Green's function as the same expected number of visits but with the condition that before each step the walk is terminated with probability $z$.

Let $x \in X$ be an arbitrary point. The radius of convergence of the random walk is given by 
\[r(X,\mu)=\rho(X,\mu)^{-1} = \left(\lim\limits_{n\rightarrow \infty} \sqrt[n]{P_n(x,x)}\right)^{-1}\].

According to basic facts from random walk theory (\cite{woess}, Chapter 1), the limit above is always defined and does not depend on the point $x$, even though it can depend on the precise generating set. We call $\rho(X,\mu)$ the {\it inverse spectral radius} of the random walk.

\begin{definition}
	The linear operator $P : l^2(X) \rightarrow l^2(X)$ defined by  $Pf(x) = \linebreak = \sum\limits_g \mu(g) f(gx)$ is called the Markov operator with respect to $\mu$.
\end{definition}

Informally, the Markov operator represents the mathematical expectation of the value of $f$ on the point reached after the first step defined by $\mu$.

\begin{definition}\label{crit}
	A function $f:X \rightarrow \mathbb{R}$ is called $\mathit{superharmonic}$ with respect to a probability measure $\mu$ if $f(x) \ge Pf(x)$ for all $x \in X$. If $f(x)=Pf(x)$ for all $x$, the function is called harmonic.
\end{definition}

It turns out that there is a very explicit connection between random walks and amenability criteria. It is summarized by Kesten's amenability test:

\begin{theorem}\label{kesten}
	Let a group $G$ act on a countable set $X$ and let $\mu$ be a measure on $G$. The following are equivalent:
	
	\begin{enumerate}
		\item The action of $G$ on $X$ is amenable.
		\item The Markov operator corresponding to the random walk defined by $\mu$ has norm $||P||=1$.  
		\item The inverse spectral radius $\rho (X, \mu)$ of the random walk is equal to 1.
	\end{enumerate}
	
\end{theorem}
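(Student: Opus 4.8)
The plan is to prove the two substantive equivalences, taking for granted the universal facts that $P$ is self-adjoint with $\|P\|\le 1$. These follow from the standing hypotheses: writing the unitary action $(U_g f)(x)=f(g^{-1}x)$ on $\ell^2(X)$, one checks from Definition \ref{crit} that $P=\sum_g \mu(g)U_g$, a convex combination of unitaries, so $\|P\|\le\sum_g\mu(g)=1$; and the symmetry $\mu(g)=\mu(g^{-1})$ gives $P=P^{*}$. I would record these at the outset so that in each direction only the reverse inequality $\|P\|\ge 1$, or the combinatorial conclusion, is at issue.

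For (2) $\Leftrightarrow$ (3) I would invoke the spectral-radius formula for self-adjoint operators. Since $P^{*}=P$, the operator norm equals the spectral radius, and the return probability is a diagonal matrix coefficient, $P_n(x,x)=\langle P^n\delta_x,\delta_x\rangle$. The upper bound $P_n(x,x)\le\|P\|^n$ is immediate from Cauchy--Schwarz, giving $\rho(X,\mu)^{-1}\le\|P\|$; the matching lower bound follows from the spectral theorem together with transitivity, which makes every $\delta_y$ a translate of $\delta_x$ and thereby forces the spectral measure of $\delta_x$ to charge every neighborhood of $\|P\|$. This is exactly the content cited from \cite{woess}, Chapter 1, so I would lean on that reference rather than reprove it.

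For (1) $\Rightarrow$ (2), amenability of the action supplies a F\o lner sequence $F_n\subset X$ with $|gF_n\bigtriangleup F_n|/|F_n|\to 0$ for each $g$. Taking the unit vectors $f_n=|F_n|^{-1/2}\mathbf{1}_{F_n}$, a direct computation gives $\langle U_g f_n,f_n\rangle=|gF_n\cap F_n|/|F_n|\to 1$, hence $\langle Pf_n,f_n\rangle=\sum_g\mu(g)\langle U_g f_n,f_n\rangle\to 1$ since $\mu$ is finitely supported. Thus $\|P\|\ge 1$, and combined with $\|P\|\le 1$ this yields $\|P\|=1$.

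The crux is (2) $\Rightarrow$ (1), where an analytic equality must be converted into combinatorial F\o lner sets. Here I would first exploit the nonnegativity of the kernel of $P$: for every real $f$ one has $|\langle Pf,f\rangle|\le\langle P|f|,\,|f|\rangle$, which forces the top of the spectrum to equal $\|P\|=1$ and simultaneously disposes of the sign ambiguity (there is no need to separately exclude $-1$). Consequently there exist nonnegative unit vectors $f_n\ge 0$ with $\|Pf_n-f_n\|\to 0$. I would then extract F\o lner sets by a co-area argument: writing $f_n=\int_0^\infty \mathbf{1}_{\{f_n>t\}}\,dt$ and summing the set-symmetric-difference estimate over levels, one shows that $\int_0^\infty |g\{f_n>t\}\bigtriangleup\{f_n>t\}|\,dt$ is small relative to $\int_0^\infty|\{f_n>t\}|\,dt$, so some super-level set is an approximate F\o lner set; an averaging/diagonal argument over the finitely many generators then produces an honest F\o lner sequence, i.e.\ amenability of the action. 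The main obstacle is precisely this level-set extraction: running it so that a \emph{single} sequence of sets works simultaneously for all generators, with the $\ell^2$ almost-invariance of the $f_n$ translated cleanly into $\ell^1$ (set-theoretic) almost-invariance.
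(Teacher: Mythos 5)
The paper offers no proof of this theorem at all --- it is quoted as Kesten's test with a citation to \cite{woess} (Chapters 10 and 12) --- so the comparison is against the standard argument in that reference, which is indeed the route you chose: $P=P^*$, $\|P\|\le 1$ from symmetry of $\mu$, the spectral theorem for $(2)\Leftrightarrow(3)$, F\o lner vectors for $(1)\Rightarrow(2)$, and level-set extraction for $(2)\Rightarrow(1)$. Your first three parts are correct as sketched, with one caveat in $(2)\Leftrightarrow(3)$: the justification ``transitivity makes every $\delta_y$ a translate of $\delta_x$ and thereby forces the spectral measure of $\delta_x$ to charge every neighborhood of $\|P\|$'' is not an argument, because $U_g$ does not commute with $P$, so translating $\delta_x$ does not transport spectral measures. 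The actual proof in Woess uses nonnegativity of the kernel, Cauchy--Schwarz ($P_{2n}(x,y)\le\sqrt{P_{2n}(x,x)P_{2n}(y,y)}$), Fekete's lemma, and density of finitely supported functions. Since you explicitly defer to the reference there, this is tolerable, but the sentence as written should not survive into a final text.

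The genuine gap is in $(2)\Rightarrow(1)$, at exactly the step you label ``the main obstacle'' and then leave unresolved. You apply the layer-cake decomposition to $f_n$ itself and want $\int_0^\infty|g\{f_n>t\}\bigtriangleup\{f_n>t\}|\,dt$ to be small relative to $\int_0^\infty|\{f_n>t\}|\,dt$. These two integrals are $\|U_gf_n-f_n\|_1$ and $\|f_n\|_1$, and $\ell^2$-almost-invariance simply does not control that ratio (Cauchy--Schwarz goes the wrong way). Concretely, on $X=\mathbb{Z}$ with $g$ the shift by one, take
\[ f_n=\mathbf{1}_{[0,n]}+n^{-2}\sum_{k=1}^{n^4}\delta_{n+2k}. \]
Then $\|U_gf_n-f_n\|_2^2/\|f_n\|_2^2\le 4/n\rightarrow 0$, while $\|U_gf_n-f_n\|_1/\|f_n\|_1\rightarrow 2$; so the level-averaged inequality you invoke is false in general. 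The missing idea --- and the classical fix --- is to square before integrating. Put $F_n=f_n^2$, so that $\|F_n\|_1=\|f_n\|_2^2=1$ and
\[ \|U_gF_n-F_n\|_1\le\|U_gf_n-f_n\|_2\,\|U_gf_n+f_n\|_2\le 2\,\|U_gf_n-f_n\|_2\rightarrow 0. \]
The super-level sets of $F_n$ are the same sets as those of $f_n$ (only the level is reparametrized, $\{F_n>t\}=\{f_n>\sqrt{t}\}$), and they are finite for $t>0$ since $F_n\in\ell^1$. Now the co-area identity $\|U_gF_n-F_n\|_1=\int_0^\infty|g\{F_n>t\}\bigtriangleup\{F_n>t\}|\,dt$, summed over the finitely many $g\in\operatorname{supp}\mu$ \emph{before} choosing the level, yields a single $t_n$ with $\sum_{g\in\operatorname{supp}\mu}|g\{F_n>t_n\}\bigtriangleup\{F_n>t_n\}|\le\varepsilon_n\,|\{F_n>t_n\}|$, which also answers your worry about one sequence serving all generators simultaneously; since $\mu$ is generating and symmetric, these sets form a F\o lner sequence for the action, i.e.\ the action is amenable. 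With this replacement your proof is complete and coincides with the argument the paper cites.
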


The proof is based on the spectral theorem for self-adjoint operators and can be found in \cite{woess} (Chapters 10 and 12). Another proof of Kesten's criterion for the left action of $G$ on itself (i.e. the case of amenable groups) can be found in \cite{J-book}.

The following amenability criterion was proved by Sam Northshield in his 1993 paper \cite{northshield}:

\begin{theorem}
	Let $G$ be a countable group and $\mu$ be a measure defined on $G$. Then $G$ is amenable if and only if for any positive superharmonic function $f$ there exists a sequence $\{x_n\}$ in $G$ such that for any $z \in G$, $\frac{f(zx_n)}{f(x_n)} \rightarrow 1$, $n \rightarrow \infty$.
\end{theorem}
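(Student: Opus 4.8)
The plan is to prove both directions of the equivalence, connecting the existence of the "approximately invariant" sequence $\{x_n\}$ to Kesten-type characterizations of amenability already available to us via Theorem~\ref{kesten}. The condition $\frac{f(zx_n)}{f(x_n)} \to 1$ strongly resembles the defining property of a F\o lner-type or asymptotically invariant sequence, so I expect the heart of the argument to be translating between the analytic language of superharmonic functions and the combinatorial/operator language of amenability.

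\textbf{The "only if" direction (amenable $\Rightarrow$ sequence exists).} Suppose $G$ is amenable and let $f$ be a positive superharmonic function. First I would normalize by considering $h = \log f$, which converts the multiplicative condition $\frac{f(zx_n)}{f(x_n)} \to 1$ into the additive statement $h(zx_n) - h(x_n) \to 0$. The superharmonicity $f(x) \ge \sum_g \mu(g) f(gx)$ combined with amenability should force $f$ to be "almost constant along F\o lner sets." Concretely, I would use the fact (from Theorem~\ref{kesten}) that amenability gives $\|P\| = 1$ and $\rho = 1$, so the random walk is recurrent-like in its spectral behavior; a positive superharmonic function then cannot decay too rapidly. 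The plan is to extract $\{x_n\}$ either from a F\o lner sequence directly or by a compactness/diagonal argument applied to the sequence of ratios $z \mapsto \frac{f(zx_n)}{f(x_n)}$, using separability of $G$ to handle all $z$ simultaneously.

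\textbf{The "if" direction (sequence exists $\Rightarrow$ amenable).} For the contrapositive, assume $G$ is nonamenable, so by Theorem~\ref{kesten} we have $\rho(G,\mu) > 1$, equivalently $\|P\| < 1$. The strategy is to exhibit a single positive superharmonic function $f$ that admits \emph{no} such sequence. The natural candidate is a Green's function $f(x) = G(x, p \mid z_0)$ for a suitable $z_0$ with $1 < z_0 < \rho$, or a $\rho$-harmonic/minimal positive eigenfunction of $P$. Such functions exist precisely in the nonamenable regime and satisfy $Pf = \frac{1}{z_0} f \le f$, so they are superharmonic with a definite gap. The gap $\frac{1}{z_0} < 1$ should obstruct the ratio condition: if $\frac{f(zx_n)}{f(x_n)} \to 1$ held for all $z$, averaging against $\mu$ would yield $\frac{Pf(x_n)}{f(x_n)} \to 1$, contradicting $\frac{Pf(x_n)}{f(x_n)} = \frac{1}{z_0} < 1$.

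\textbf{Main obstacle.} I expect the principal difficulty to lie in the "only if" direction, specifically in producing the sequence $\{x_n\}$ from amenability alone. Amenability supplies F\o lner sets but does not \emph{a priori} control the behavior of an arbitrary positive superharmonic function along them; superharmonic functions can be unbounded and badly behaved. The key technical step will be showing that on a F\o lner set the superharmonic inequality propagates into near-harmonicity on average, so that the "defect" $f(x) - Pf(x)$ must be small somewhere, and then upgrading this averaged smallness to the pointwise ratio convergence for \emph{every} fixed $z$. Handling the uniformity over all $z \in G$ simultaneously—rather than for a single $z$—is where a careful diagonalization or a clever choice of F\o lner exhaustion will be essential.
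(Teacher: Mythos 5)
Your ``if'' direction is essentially correct and coincides with the paper's argument: in the non-amenable case one exhibits a positive superharmonic function with a definite spectral gap, and your candidate $f = G(\cdot, e \mid z_0)$ with $1 < z_0 < r$ is a legitimate witness (note $Pf = \frac{1}{z_0}(f - \delta_e) \le \frac{1}{z_0}f$, with equality only away from the base point, which is all you need; finite support of $\mu$ justifies passing the limit through the average $\frac{Pf(x_n)}{f(x_n)} = \sum_g \mu(g)\frac{f(gx_n)}{f(x_n)}$). The paper does the same thing more abstractly, invoking Lemma \ref{one} to produce $f > 0$ with $Pf \le \lambda f$, $\lambda < 1$.

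The ``only if'' direction, however, contains a genuine gap, and it is precisely the step you flag as the ``main obstacle'': nothing in your outline actually produces the sequence $\{x_n\}$. The assertion that amenability forces an arbitrary positive superharmonic function to be ``almost constant along F\o lner sets'' is not a citable fact --- it is essentially the content of the theorem, and F\o lner sets give no pointwise control over $f$; the whole difficulty is locating the good points, and neither the substitution $h = \log f$ nor a diagonalization over an exhaustion supplies a selection mechanism. The paper's proof (following Northshield) does not use F\o lner sets at all. First, for each $n$ one shows $\sup_x P^{(n)}f(x)/f(x) = 1$: otherwise $P^{(n)}f \le \varepsilon^n f$ for some $\varepsilon \in (0,1)$, and then $g = \sum_{i=0}^{n-1}\varepsilon^{-i}P^{(i)}f$ satisfies $Pg \le \varepsilon g$, contradicting the spectral characterization of amenability (Lemma \ref{one}). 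Second --- and this is the key device missing from your plan --- one sets $f_x(z) = \sqrt{f(zx)/f(x)}$, checks via Jensen's inequality that $f_x$ is again positive superharmonic, computes $P^{(n)}f_x(e) = P^{(n)}[f^{1/2}](x)/f(x)^{1/2}$, and applies the first step to the superharmonic function $f^{1/2}$ to choose points $x_k$ with $P^{(n)}f_{x_k}(e) \to 1$ for every $n$. The second-moment identity $\sum_z \mu^{(n)}(z)\bigl(f_{x_k}(z) - 1\bigr)^2 = P^{(n)}[f_{x_k}]^2(e) - 2P^{(n)}f_{x_k}(e) + 1 \to 0$ then forces $f_{x_k} \to 1$ pointwise on the support of each $\mu^{(n)}$, hence on all of $G$ since $\mu$ is generating. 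Note also that your concern about ``uniformity over all $z$'' is a non-issue: the theorem asks only for pointwise (in $z$) convergence along the sequence, which the variance argument delivers directly, with no diagonalization required.
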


We extend the statement to all amenable actions of a discrete group $G$ on a graph $X$. The structure of our proof is based on the original.

The following fact is proven in \cite{woess} (Lemma 7.2, p. 81):

\begin{lemma}
	Let $\mu$ be a generating and symmetric measure on $G$, and $P$ be the corresponding Markov operator. Then the spectral radius of $P$ satisfies $r(P)^{-1} = \inf\{ \lambda : \exists f > 0: \ Pf \le \lambda f \}$.
\end{lemma}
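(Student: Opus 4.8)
The plan is to establish the two opposite inequalities relating
\[\rho := r(P)^{-1} = \lim_{n\to\infty}\sqrt[n]{P_n(x,x)}, \qquad m := \inf\{\lambda : \exists\, f>0,\ Pf\le\lambda f\},\]
so that the asserted identity $r(P)^{-1}=m$ reads simply $\rho=m$. Here I am reading the paper's convention $r(X,\mu)=\rho(X,\mu)^{-1}=\bigl(\lim_n\sqrt[n]{P_n(x,x)}\bigr)^{-1}$, so that the reciprocal $r(P)^{-1}$ of the spectral radius is exactly $\lim_n\sqrt[n]{P_n(x,x)}$. The easy inclusion is $m\ge\rho$; the substantial one is $m\le\rho$, and the latter is where Green's function enters.

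First I would prove $m\ge\rho$. Suppose $f>0$ satisfies $Pf\le\lambda f$ (note $\lambda>0$, since $Pf(x)=\sum_y P_1(x,y)f(y)>0$). As $P$ has nonnegative kernel $P_1(x,y)\ge0$ it is monotone, and iterating $Pf\le\lambda f$ gives $P^nf\le\lambda^n f$ for every $n$, where $P^n$ has kernel $P_n$ by Chapman--Kolmogorov. Evaluating at a fixed point $x$ and keeping only the diagonal term,
\[\lambda^n f(x)\ \ge\ P^nf(x)\ =\ \sum_y P_n(x,y)f(y)\ \ge\ P_n(x,x)f(x),\]
so $P_n(x,x)\le\lambda^n$. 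Taking $n$-th roots and letting $n\to\infty$ yields $\rho\le\lambda$; since $\lambda$ ranges over all admissible values, $m\ge\rho$.

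The reverse inequality $m\le\rho$ is the heart of the matter. Fix a base point $o$ and, for an arbitrary $\lambda>\rho$, set $t:=1/\lambda<1/\rho=r(X,\mu)$ and define
\[f(x)\ :=\ G(x,o\mid t)\ =\ \sum_{n=0}^{\infty}P_n(x,o)\,t^n .\]
Because for an irreducible walk the Green function $G(\cdot,o\mid z)$ has the common radius of convergence $r(X,\mu)$ at every point, and $t<r(X,\mu)$, the series converges and $f$ is finite; irreducibility (guaranteed by the standing assumption that the action is transitive and $\mu$ generating) also forces $P_N(x,o)>0$ for some $N$, so $f(x)>0$ for all $x$. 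Splitting off the first step via $P_n(x,o)=\sum_y P_1(x,y)P_{n-1}(y,o)$ for $n\ge1$ gives the resolvent identity
\[f\ =\ \delta_o + t\,Pf,\qquad\text{equivalently}\qquad Pf=\tfrac1t\bigl(f-\delta_o\bigr)\le\tfrac1t f=\lambda f .\]
Thus $f$ is a strictly positive $\lambda$-superharmonic function, whence $m\le\lambda$; letting $\lambda\downarrow\rho$ gives $m\le\rho$ and closes the proof.

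The step I expect to be the main obstacle is the construction in the third paragraph: one must know that the power series defining $f$ converges for every $t$ strictly below the radius of convergence and at every base point --- i.e. that the off-diagonal Green functions all share the common radius $r(X,\mu)=\rho^{-1}$ --- and that $f$ is genuinely everywhere positive. Both facts rest on irreducibility of the walk, which comes from the standing assumptions (transitivity of the action together with $\mu$ generating); symmetry of $\mu$ is not actually needed for the identity, but it does make $P$ self-adjoint on $\ell^2(X)$, so that $r(P)$ is literally the spectral radius that the statement names. The remaining ingredients --- monotonicity of $P$ and the first-step resolvent identity --- are routine once the Green function is under control.
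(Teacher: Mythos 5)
Your proof is correct. There is actually no internal proof to compare it against: the paper quotes this lemma from Woess (Lemma 7.2, p.~81) without reproving it, and your argument is essentially the one behind that citation. The easy direction (iterate $Pf \le \lambda f$ to get $P^n f \le \lambda^n f$ and keep the diagonal term) is the standard one, and for the substantial direction you use the same device as Woess: the Green function $f = G(\cdot,o\mid t)$ with $t = 1/\lambda < r(X,\mu)$, which is finite and strictly positive by irreducibility and satisfies the resolvent identity $f = \delta_o + tPf$, hence $Pf \le \lambda f$. The one point worth flagging is that Woess proves a \emph{stronger} statement, namely that the infimum is attained: there exists $f>0$ with $Pf \le \rho f$, where $\rho = r(X,\mu)^{-1}$. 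That attainment requires letting $z \uparrow r(X,\mu)$ and extracting a pointwise limit of normalized Green functions $G(\cdot,o\mid z)/G(o,o\mid z)$ via a Harnack-type inequality, to cover the $\rho$-recurrent case where $G(\cdot,o\mid r)=\infty$. Since the lemma as stated in the paper asserts only the infimum identity, your shortcut --- working with $\lambda$ strictly above $\rho$ and letting $\lambda \downarrow \rho$ --- is fully sufficient and cleanly avoids that compactness argument. You also resolved the paper's ambiguous notation correctly: $r(P)$ must be read as the radius of convergence $r(X,\mu)=\rho^{-1}$, so that $r(P)^{-1}=\rho$; under the literal reading of $r(P)$ as the operator spectral radius the stated identity would contradict the paper's own corollary on amenability.
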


\begin{corollary}
	The action of $G$ on $X$ is amenable if and only if $\inf\{ \lambda : \exists f > 0: \ Pf \le \lambda f \} = 1$.
\end{corollary}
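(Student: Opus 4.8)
The plan is to read off the corollary immediately by chaining the preceding Lemma with Kesten's test (Theorem~\ref{kesten}); the only genuine work is to recognize that the various quantities named in those two results are one and the same spectral invariant. Both tools are available here: the standing assumption makes $\mu$ generating and symmetric, so the Lemma applies, and it makes the action transitive, so Kesten's test applies.

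First I would use the Lemma to trade the variational expression for a spectral one,
\[
\inf\{ \lambda : \exists f > 0: \ Pf \le \lambda f \} = r(P)^{-1},
\]
where $r(P)$ is the radius of convergence of the Green's function $G(x,p\,|\,z)=\sum_n P_n(x,p)z^n$. The key identification is $r(P)^{-1}=\rho(X,\mu)$: by the root test the power series $\sum_n P_n(x,x)z^n$ has radius of convergence $\bigl(\lim_n \sqrt[n]{P_n(x,x)}\bigr)^{-1}=\rho(X,\mu)^{-1}$, so its reciprocal is exactly the inverse spectral radius $\rho(X,\mu)$. Since $\mu$ is symmetric, $P$ is self-adjoint, and this same number equals the operator norm $\|P\|$ that appears in item~(2) of Kesten's test.

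Combining these gives $\inf\{ \lambda : \exists f > 0: \ Pf \le \lambda f \}=\rho(X,\mu)=\|P\|$. By Theorem~\ref{kesten} the action of $G$ on $X$ is amenable precisely when $\rho(X,\mu)=1$ (equivalently $\|P\|=1$), and substituting the displayed equality shows this occurs exactly when the infimum equals $1$, which is the assertion. I expect the main --- indeed the only --- difficulty to be bookkeeping rather than analysis: one must confirm that the radius of convergence $r(P)$ of the Lemma, the limit $\lim_n \sqrt[n]{P_n(x,x)}$ defining $\rho(X,\mu)$, and the norm $\|P\|$ of Kesten's test are genuinely the same number. Once $r(P)^{-1}=\rho(X,\mu)=\|P\|$ is established the equivalence follows with no further estimate.
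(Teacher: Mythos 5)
Your proposal is correct and takes essentially the same route the paper intends: the corollary is stated there without an explicit proof, precisely because it is the immediate concatenation of the preceding Lemma with Kesten's test (Theorem \ref{kesten}). Your only added content --- checking that $r(P)^{-1}$, $\rho(X,\mu)$, and $\|P\|$ are the same number --- is exactly the bookkeeping the paper leaves implicit, and it is carried out correctly.
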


\begin{theorem}
	The action of $G$ on $X$ is amenable if and only if for all positive superharmonic functions $f$ with respect to $\mu$ there exists a sequence $\{E_n\}$ in $X$ such that for all $x \in G$, $\frac{f(xE_n)}{f(E_n)} \rightarrow 1$ as $n \rightarrow \infty$.
\end{theorem}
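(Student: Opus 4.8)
The plan is to reduce both implications to a single pointwise inequality relating a positive superharmonic function $f$ to its square root, and then to read them off from the Lemma and the Corollary. Write $S=\operatorname{supp}(\mu)$ and, for a positive function $f$, set
\[
D_f(x)=\sum_{g}\mu(g)\left(\sqrt{\tfrac{f(gx)}{f(x)}}-1\right)^{2}\ge 0 .
\]
Expanding the square and using $\sum_{g}\mu(g)=1$ gives the identity $D_f(x)=\frac{Pf(x)}{f(x)}-2\frac{P\sqrt f(x)}{\sqrt{f(x)}}+1$. When $f$ is superharmonic, $Pf(x)\le f(x)$, so $\frac{Pf(x)}{f(x)}\le 1$, and therefore, with $h:=\sqrt f>0$,
\[
Ph(x)\le\left(1-\tfrac12 D_f(x)\right)h(x)\qquad(x\in X).
\]
This is the engine of the whole argument: $D_f(x)$ is small exactly when the ratios $f(gx)/f(x)$ for $g\in S$ are all close to $1$, and it is tied to $P$ through this sub-eigenfunction inequality.

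For the easy direction ($\Leftarrow$) I argue by contraposition. If the action is non-amenable then $\|P\|<1$ by Kesten's theorem, and by the Lemma $\inf\{\lambda:\exists\phi>0,\ P\phi\le\lambda\phi\}=\|P\|<1$, so there are a positive $\phi$ and a constant $\lambda_0<1$ with $P\phi\le\lambda_0\phi$; in particular $\phi$ is superharmonic. If some sequence $E_n$ satisfied $\phi(gE_n)/\phi(E_n)\to 1$ for every $g$, then, the sum being finite, $\sum_{g}\mu(g)\frac{\phi(gE_n)}{\phi(E_n)}\to\sum_{g}\mu(g)=1$, contradicting $\sum_{g}\mu(g)\frac{\phi(gE_n)}{\phi(E_n)}=\frac{P\phi(E_n)}{\phi(E_n)}\le\lambda_0<1$. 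Thus $\phi$ is a positive superharmonic function with no such sequence, which is the contrapositive of the claim.

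For the main direction ($\Rightarrow$), suppose the action is amenable, so $\|P\|=1$, and fix a positive superharmonic $f$. Taking the infimum over $x$ in the displayed inequality and invoking the Lemma gives $\|P\|\le 1-\tfrac12\inf_x D_f(x)$, whence $\inf_x D_f(x)=0$. This already produces points at which $f(sx)/f(x)\to 1$ for all generators $s\in S$, but the theorem asks for convergence for every $x\in G$, and naive telescoping along a word $s_1\cdots s_\ell$ fails, since I have no control of $D_f$ at the shifted points $s_2\cdots s_\ell E_n$. I expect this propagation step to be the main obstacle. To overcome it I replace $\mu$ by averaged convolutions: as $P$ is monotone, $P^{k}f\le f$ for all $k$, so $f$ is superharmonic for every measure $\nu=\sum_{k=1}^{K}c_k\mu^{*k}$ with $c_k>0$ and $\sum_k c_k=1$; such a $\nu$ is a finite, symmetric, generating probability measure whose support is $\bigcup_{k\le K}\operatorname{supp}(\mu^{*k})$, and since $\mu$ is generating, enlarging $K$ makes this support contain any prescribed finite subset of $G$. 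Crucially, replacing $\psi$ by $|\psi|$ can only increase $\langle P\psi,\psi\rangle$, so the top of the spectrum of the self-adjoint $P$ equals $\|P\|$; amenability then gives $1\in\sigma(P)$ and hence $\|Q\|=1$ for $Q=\sum_k c_k P^{k}$. Applying the same inequality to $\nu$ therefore yields $\inf_x D^{\nu}_f(x)=0$, where $D^{\nu}_f$ is defined as above with $\mu$ replaced by $\nu$.

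Finally I assemble the sequence by a diagonal choice. Enumerate $G=\{g_1,g_2,\dots\}$; for each $m$ pick $K_m$ so that $g_1,\dots,g_m$ lie in the support of $\nu^{(m)}=\sum_{k\le K_m}c_k\mu^{*k}$, and then, using $\inf_x D^{\nu^{(m)}}_f(x)=0$, choose $E_m$ with $D^{\nu^{(m)}}_f(E_m)$ so small that $\bigl|f(g_jE_m)/f(E_m)-1\bigr|<1/m$ for every $j\le m$; this is possible because each weight $\nu^{(m)}(g_j)$ is a fixed positive number for $j\le m$. For a fixed $x=g_j$ and all $m\ge j$ the discrepancy is below $1/m$, so $f(xE_m)/f(E_m)\to 1$, as required. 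The only points needing care beyond routine estimates are the spectral fact $\sup\sigma(P)=\|P\|$ used to keep the averaged operators at norm $1$, and the bookkeeping that makes the averaged supports exhaust $G$.
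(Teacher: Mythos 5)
Your proposal is correct, and while your easy direction is essentially the paper's (both exhibit a positive $\phi$ with $P\phi\le\lambda_0\phi$, $\lambda_0<1$, from non-amenability and Lemma~\ref{one}, and derive a contradiction by summing the ratios over the finite support of $\mu$), your argument for the amenable direction takes a genuinely different route. The paper, following Northshield, first uses the geometric-series test function $g=\sum_{0\le i\le n-1}\varepsilon^{-i}P^{(i)}f$ together with Lemma~\ref{one} to force $\sup_E \pn f(E)/f(E)=1$ for every $n$, then introduces $f_E(x)=\sqrt{f(xE)/f(E)}$ (superharmonic by Jensen), uses the identity $\pn f_E(e)=\pn[f^{1/2}](E)/f(E)^{1/2}$ and monotonicity of $\pn f_E(e)$ in $n$ to choose one sequence $E(k)$ good for all $n$ simultaneously, and finally expands $\sum_x\mu^{(n)}(x)\left(f_{E(k)}(x)-1\right)^2=\pn[f_{E(k)}]^2(e)-2\pn f_{E(k)}(e)+1\rightarrow 0$, reaching an arbitrary $x\in G$ because some convolution power $\mu^{(n)}$ charges it. Your $D_f$ is exactly this second-moment quantity (for $\mu^{*n}$ it is the displayed sum above), but you make it small by a different mechanism: the one-step sub-eigenfunction inequality $P\sqrt f\le\left(1-\tfrac12 D_f\right)\sqrt f$ plus Lemma~\ref{one} gives $\inf_x D_f(x)=0$ immediately, with no series trick and no monotonicity-in-$n$ argument; and instead of convolution powers of the fixed measure you pass to finite averages $\nu^{(m)}=\sum_{k\le K_m}c_k\mu^{*k}$ whose supports exhaust $G$. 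That substitution is the one place where you need an ingredient the paper does not: you must know the averaged Markov operator still has norm one, and your positivity/spectral-mapping argument ($\sup\sigma(P)=\|P\|$, hence $1\in\sigma(P)$, hence $\|Q\|=1$) supplies it correctly; alternatively you could simply note that Kesten's theorem and Lemma~\ref{one} apply verbatim to any finitely supported generating symmetric measure, and amenability of the action is measure-independent. The trade-off: the paper's route needs $\mu^{(n)}$ to charge each point (generating, plus the aperiodicity assumed in that section), while your averages contain the identity in their support once $K_m\ge2$ and so sidestep periodicity; conversely you invoke a spectral fact the paper never uses. Two trivial points to tighten: renormalize the truncated weights $c_k$ so that each $\nu^{(m)}$ is a probability measure, and record explicitly that $(\sqrt t-1)^2<\delta$ with $\delta<1$ implies $|t-1|<3\sqrt{\delta}$, which is what converts smallness of $D^{\nu^{(m)}}_f(E_m)$ into the bound $\left|f(g_jE_m)/f(E_m)-1\right|<1/m$.
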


\begin{proof}
	Let $G$ act amenably on $X$ and let $f$ be a positive superharmonic function. Define $P^{(n)}$ to be the $n$-fold convolution of $P$ with itself and suppose that $\sup\limits_E \frac{\pn f(E)} {f(E)} \neq 1$. Then there exists $\varepsilon \in (0,1) $ such that $\pn f \le \varepsilon^n f$. Let $g = \sum\limits_{0 \le i \le n-1} \frac{P^{(i)}f}{\varepsilon^i}$. Then $Pg - \varepsilon g = \frac{\pn f(E)}{\varepsilon^{n-1}} - \varepsilon f \le 0 $, which is, by Lemma \ref{one}, a contradiction. Hence $\sup\limits_E \frac{\pn f(E)} {f(E)} = 1$.
	
	For $E \in X$, define $f_E(x)=\sqrt{\frac{f(xE)}{f(E)}}$, where $x \in G$. $f_E^2$ is positive superharmonic, since \[ \sum\limits_g \mu(g) \frac{f(gxE)}{f(E)} = \frac{1}{f(E)} \sum\limits_g  \mu(g) f(g \cdot xE) =  \frac{Pf(xE)}{f(E)} \le \frac{f(xE)}{f(E)}. \] 
	
	Since $t \rightarrow t^{1/2}$ is an increasing concave function, by Jensen's inequality $f_E$ is also positive and superharmonic. Note that
	
	\begin{multline*}
	\pn f_E(e) = \sum\limits_x \mu^{(n)} (x) f_E(x) = \sum\limits_x \mu^{(n)} (x) \sqrt{\frac{f(xE)}{f(E)}} = \\ = \sum\limits_F \pn (E,F) \sqrt{\frac{f(F)}{f(E)}} = \frac{\pn [f^{1/2}](E)}{f(E)^{1/2}} .
	\end{multline*}
	
	Thus, $\sup\limits_E \pn f_E(e) = 1$.
	
	Since, for all $E$, $\pn f_E(e)$ is decreasing as a function of $n$, we can choose $E(k)$ such that, for all $n$, $\pn f_{E(k)}(e) \rightarrow 1$ as $k$ goes to infinity. Then we have: 
	
	$1 = \left( f_{E(k)}(e) \right)^2 \ge \pn [f_{E(k)}]^2(e) \ge [\pn f_{E(k)}(e)]^2 \rightarrow 1$. Thus:
	
	$\sum\limits_x \mu^{(n)} (x) \left(f_{E(k)}(x)-1\right)^2 = \pn [f_{E(k)}]^2(e) - 2 \pn f_{E(k)}(e)+1 \rightarrow 0$, $k \rightarrow \infty$. Since $\mu$ is generating, we have $f_{E(k)} \rightarrow 1$ pointwise.
	
	To prove the opposite direction, assume that for every positive superharmonic function $f$ there is a sequence $\{E_n\}$ satisfying the condition of the theorem. Then $\frac{Pf(E_n)}{f(E_n)} = \sum\limits_x \mu(x) \frac{f(xE_n)}{f(E_n)} \rightarrow 1$, which implies amenability by Lemma \ref{one}.
	
\end{proof}

This criterion can be rewritten in terms of words of certain length. We note that each element $g \in G$ can be represented by a word $w(g) \in S^*$, where $S$ is the support of $\mu$ and hence a generating set by convention. Abusing notation, we will normally substitute $g$ for $w(g)$ whenever no confusion arises. 

\begin{corollary}\label{north-equiv}
	Let $\beta: \mathbb{N} \rightarrow \mathbb{R}_{>0}$ be a non-increasing function satisfying $\lim\limits_{n \rightarrow \infty} \beta(n) = 0$.
	The action of $G$ on $X$ is amenable if and only if for any positive superharmonic function $f$ with respect to $\mu$ there exists a sequence $\{E_n\}$ such that any word $g$ of length not exceeding $n$ satisfies $\left|\frac{f(gE_n)}{f(E_n)}-1\right| < \beta(n)$.
\end{corollary}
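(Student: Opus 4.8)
The plan is to show that the $\beta$-condition stated here is equivalent to the convergence condition of the preceding theorem, and hence equivalent to amenability. The two implications will use the two defining properties of $\beta$ separately: its strict positivity drives the ``amenable implies the $\beta$-estimate'' direction, while its decay to $0$ drives the converse.

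For the forward direction I would suppose the action is amenable and fix a positive superharmonic $f$. By the previous theorem there is a sequence $\{F_m\}$ in $X$ with $\frac{f(gF_m)}{f(F_m)} \to 1$ as $m \to \infty$ for every fixed $g \in G$. The key observation is that, because $\mu$ is finitely supported and generating (our standing assumption), the set $B_n = \{g \in G : |w(g)| \le n\}$ of words of length at most $n$ is finite for every $n$. Pointwise convergence over the finite set $B_n$ is automatically uniform, so for each $n$ I can choose an index $m(n)$ with $\max_{g \in B_n} \left| \frac{f(gF_{m(n)})}{f(F_{m(n)})} - 1 \right| < \beta(n)$; here the only property of $\beta$ I use is $\beta(n) > 0$. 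Setting $E_n = F_{m(n)}$ then produces a sequence with exactly the required property.

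For the converse I would suppose that for every positive superharmonic $f$ there is a sequence $\{E_n\}$ satisfying the $\beta$-estimate. Fix such an $f$ and an arbitrary $g \in G$; for all $n \ge |w(g)|$ the word $g$ lies in $B_n$, so $\left| \frac{f(gE_n)}{f(E_n)} - 1 \right| < \beta(n) \to 0$. Hence $\frac{f(gE_n)}{f(E_n)} \to 1$ for every $g$, which is precisely the hypothesis of the preceding theorem, and amenability follows. Alternatively one can argue directly: since $\mu$ has finite support $S$, for $n$ larger than the maximal word length of elements of $S$ one gets $\left| \frac{Pf(E_n)}{f(E_n)} - 1 \right| \le \sum_g \mu(g) \left| \frac{f(gE_n)}{f(E_n)} - 1 \right| \le \beta(n) \to 0$, and amenability follows from the corollary to the cited lemma of Woess.

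The only genuine obstacle is the diagonalization in the forward direction, and it is a mild one: it rests entirely on the finiteness of the balls $B_n$, which is guaranteed by finite generation. Without finite support of $\mu$ one could not upgrade the merely pointwise convergence supplied by the theorem to the uniform-on-$B_n$ estimate that the corollary demands. The monotonicity of $\beta$ is not actually needed for the argument and serves only to keep the estimate coherent as $n$ grows; all that is used is $\beta > 0$ in the forward direction and $\beta(n) \to 0$ in the converse.
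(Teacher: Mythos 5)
Your proof is correct and follows essentially the same route as the paper: the forward direction exploits the finiteness of the ball of words of length at most $n$ to upgrade the pointwise convergence from Theorem \ref{north-strong} to a uniform estimate and then reindexes (the paper phrases this as choosing a subsequence $\{E_{n_i}\}$, you as choosing $m(n)$), and the converse fixes a word $g$ and notes that once $n \ge |w(g)|$ the bound $\beta(n) \to 0$ yields the pointwise convergence required by that theorem. Your alternative ending via the estimate on $\frac{Pf(E_n)}{f(E_n)}$ and Lemma \ref{one} is a harmless shortcut but not a substantively different argument.
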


\begin{proof}
	If there is a sequence $\{E_n\}$ satisfying the conditions of Theorem \ref{north-strong} and $k$ is a positive integer, then $\left|\frac{f(gE_n)}{f(E_n)}-1\right| \rightarrow 0$ for any $g$ of length at most $k$. Since there are only finitely many words of length at most $k$, this implies
	\[\max\limits_{|g| \le k}  \left|\frac{f(gE_n)}{f(E_n)}-1\right| \rightarrow 0.\] It remains to choose a subsequence $\{E_{n_i}\}$, $i \in \mathbb{N}$ such that \[\max\limits_{|g| \le k}  \left|\frac{f(gE_{n_i})}{f(E_{n_i})}-1\right| < \beta(k).\] 
	
	On the other hand, if any word $g \in S^*$ of length not exceeding $n$ satisfies $\left|\frac{f(gE_n)}{f(E_n)}-1\right| < \beta(n)$, then for a word $g$ of length $k$ we have that $\left|\frac{f(gE_n)}{f(E_n)}-1\right| < \frac{1}{n}$ as long as $n \ge k$. This immediately applies convergence to 0.
\end{proof}

As an example, we note that all bounded positive harmonic functions have an $E_n$-approximation.

\begin{lemma}
	Let $h$ be a bounded positive harmonic function with respect to the simple random walk on a locally finite connected graph $X$. Then there is a sequence $\{x_n\}$ in $X$ such that for any word $g \in S$ the sequence $\frac{h(gx_n)}{h(x_n)}$ converges to 1.
\end{lemma}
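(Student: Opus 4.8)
The plan is to exploit the boundedness of $h$ together with the mean-value (harmonicity) property by working at the supremum of $h$, where positivity guarantees a nonzero value. Set $M = \sup_{x \in X} h(x)$; since $h$ is positive and bounded, $0 < M < \infty$. Because $M$ is a supremum, there is a maximizing sequence $\{x_n\}$ in $X$ with $h(x_n) \to M$ as $n \to \infty$, and this sequence is my candidate. The whole argument then reduces to showing that $h(g x_n) \to M$ for every fixed $g$, from which $\frac{h(g x_n)}{h(x_n)} \to M/M = 1$ follows at once. I choose the supremum rather than the infimum precisely because $M > 0$ is guaranteed, whereas $\inf h$ could vanish and make the limiting ratio indeterminate.

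The key step is a telescoping/averaging inequality. Fix a word $g$ of length $\ell$ in the generators $S$. Since $h$ is harmonic, $Ph = h$ and hence $P^{\ell} h = h$; unwinding the operator gives the identity
\[ h(x_n) = \sum_{g'} \mu^{*\ell}(g')\, h(g' x_n). \]
Every value $h(g' x_n)$ is bounded above by $M$, so subtracting each side from $M$ yields
\[ M - h(x_n) = \sum_{g'} \mu^{*\ell}(g')\,\bigl(M - h(g' x_n)\bigr), \]
a sum of nonnegative terms whose total tends to $0$. Because $g = s_1 \cdots s_\ell$ is itself a length-$\ell$ factorization with each $s_i \in S = \mathrm{supp}(\mu)$ receiving positive mass, we have $\mu^{*\ell}(g) > 0$, so the single term indexed by $g' = g$ is dominated by the whole sum: $0 \le \mu^{*\ell}(g)\bigl(M - h(g x_n)\bigr) \le M - h(x_n) \to 0$. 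Dividing by the positive constant $\mu^{*\ell}(g)$ forces $h(g x_n) \to M$.

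Finally, for any fixed $g$ — in particular any single generator $g \in S$, which is the case the statement asks for, though the argument costs nothing extra for arbitrary words of any fixed length — combining $h(g x_n) \to M$ with $h(x_n) \to M > 0$ gives $\frac{h(g x_n)}{h(x_n)} \to 1$, as required.

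I expect no genuine obstacle here; the only delicate points are bookkeeping. The two things to verify carefully are that $M > 0$ (immediate from positivity, and what keeps the denominator away from zero) and the claim $\mu^{*\ell}(g) > 0$, i.e. $g \in \mathrm{supp}(\mu^{*\ell})$, which relies on $\mu$ being supported on the generating set $S$ with each generator carrying strictly positive weight. The standing assumptions that $\mu$ is finitely supported, generating, and symmetric make both verifications routine, so the maximizing-sequence argument goes through directly.
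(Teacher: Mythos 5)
Your proof is correct and is essentially the paper's own argument: both take points where $h$ is nearly its supremum and use harmonicity to force the values at their translates to be nearly the supremum too, and your key bound $M - h(gx_n) \le \bigl(M - h(x_n)\bigr)/\mu^{*\ell}(g) \le d^{\ell}\bigl(M - h(x_n)\bigr)$ is exactly the paper's inductive estimate $1 - h(gy_n) > 1 - d^k\varepsilon$ in disguise. The only difference is packaging: the paper runs an explicit induction on word length with the quantitative choice $\varepsilon = \frac{1}{nd^n}$, giving the error $\frac{1}{n}$ uniformly over all words of length at most $n$, while your soft maximizing-sequence formulation reaches the stated limit directly (and works verbatim for non-uniform, non-lazy step distributions).
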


\begin{proof}
	We will give a proof for the case of a lazy simple random walk: the generating set $S$ is symmetric, contains the identity element and has $d$ elements in total. We also assume $\mu(g) = \frac{1}{d}$ for any $g \in S$. The more general case can be proven in a similar but slightly more technical way.
	
	Put $\beta(n) = \frac{1}{n}$. Without loss of generality, assume $\sup h = 1$. If the function is constant, the statement is obvious. Otherwise, take a point $y_n$ such that $f(y_n)>1-\varepsilon$, where $\varepsilon = \frac{1}{n d^n}$.
	
	If $g$ is a one-letter word, by harmonicity we get $h(gy_n)>1-d\varepsilon$, because $h(gy_n)>1-\varepsilon$ is the average of $d$ values not exceeding $1$. In analogy, if $g$ is a two-letter word, we get $h(gy_n)>1-d\cdot d\varepsilon=1-d^2\varepsilon$. Continuing by induction, it can be seen that for any word $g$ of length $n$ or less $1 \ge h(gy_n) > 1 - d^n \varepsilon = 1-\frac{1}{n} $, which implies that $\left| \frac{h(gy_n)}{h(y_n)} -1 \right| < \frac{1}{n}$. 
\end{proof}

The following "weak" version of the criterion is also true.

\begin{theorem}
	The action of $G$ on $X$ is amenable if and only if for all positive superharmonic functions $f$ with respect to $\mu$ there exists a sequence $\{E_n\}$ in $X$ such that for all $x \in G$, $\frac{Pf(E_n)}{f(E_n)} \rightarrow 1$ as $n \rightarrow \infty$.
\end{theorem}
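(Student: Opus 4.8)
The plan is to obtain both directions from material already in hand: the forward implication from the strong criterion (Theorem~\ref{north-strong}), and the converse from the characterization of amenability via $\inf\{\lambda : \exists f > 0,\ Pf \le \lambda f\} = 1$ established from Lemma~\ref{one}. I would first observe that the ratio $Pf(E_n)/f(E_n)$ carries no dependence on $x$, so the quantifier ``for all $x \in G$'' in the statement is vacuous; the condition to be verified is simply $Pf(E_n)/f(E_n) \to 1$.

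For the forward direction, suppose the action is amenable and fix a positive superharmonic $f$. Theorem~\ref{north-strong} supplies a sequence $\{E_n\}$ with $f(xE_n)/f(E_n) \to 1$ for every $x \in G$. Writing the Markov operator out, $Pf(E_n)/f(E_n) = \sum_x \mu(x)\, f(xE_n)/f(E_n)$ is a \emph{finite} sum because $\mu$ is finitely supported, so I may pass to the limit term by term and conclude $Pf(E_n)/f(E_n) \to \sum_x \mu(x) = 1$. The same sequence therefore witnesses the weak condition, so this direction is immediate.

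For the converse I would argue by contrapositive. Assume the action is not amenable. By Lemma~\ref{one} the quantity $\inf\{\lambda : \exists f > 0,\ Pf \le \lambda f\}$ equals $r(P)^{-1}$, and by the corollary to that lemma it equals $1$ precisely when the action is amenable; since the constant function already shows this infimum is at most $1$, non-amenability forces it to be strictly less than $1$. Hence there exist $\lambda_0 < 1$ and a strictly positive $f_0$ with $Pf_0 \le \lambda_0 f_0$. Because $\lambda_0 < 1$ and $f_0 > 0$, we have $Pf_0 \le \lambda_0 f_0 \le f_0$, so $f_0$ is (strictly) superharmonic, and moreover $Pf_0(E)/f_0(E) \le \lambda_0 < 1$ for \emph{every} $E \in X$. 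Thus $\sup_E Pf_0(E)/f_0(E) \le \lambda_0 < 1$, and no sequence $\{E_n\}$ can drive $Pf_0(E_n)/f_0(E_n)$ to $1$. This produces a positive superharmonic function for which the weak condition fails, which is exactly the contrapositive.

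The whole argument is short precisely because the analytic content sits in Lemma~\ref{one} and Theorem~\ref{north-strong}. The only routine care needed is the interchange of limit and summation in the forward direction, which is legitimate since the sum is finite. The genuinely interesting point—and the closest thing to an obstacle—is conceptual rather than computational: one must notice that although the weak hypothesis is formally weaker than the strong one, it still detects non-amenability, and the reason is that non-amenability manufactures a \emph{uniform} multiplicative gap $\lambda_0 < 1$, so the obstruction $Pf_0/f_0 \le \lambda_0$ holds simultaneously at all points rather than merely pointwise. I would double-check only that this gap function $f_0$ is indeed admissible (positive and superharmonic), so that it constitutes a legitimate test function.
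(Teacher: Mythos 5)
Your proposal is correct and takes essentially the same route as the paper: the forward direction specializes the strong criterion of Theorem~\ref{north-strong} through the finite sum $\frac{Pf(E_n)}{f(E_n)}=\sum_x \mu(x)\frac{f(xE_n)}{f(E_n)}$, and the converse rests on Lemma~\ref{one}, where non-amenability produces $\lambda_0<1$ and $f_0>0$ with $Pf_0\le\lambda_0 f_0$, a positive superharmonic function whose ratio is uniformly bounded away from $1$. The paper gives this converse only in outline (``using Lemma~\ref{one} in a way similar to Theorem~\ref{north-strong}''), so your write-up merely fills in the details the paper leaves implicit.
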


\begin{proof}
	If the action is amenable, then for any suitable function $f$ there is a sequence $\{E_n\}$ satisfying the stronger condition from Theorem \ref{north-strong}, which implies in particular the statement of this theorem.
	
	The opposite direction is proven using Lemma \ref{one} and the counterexample from the proof of Theorem \ref{north-strong}.
\end{proof}

We first show the proof of existence of $E_n$-approximations for generic min-functions using several lemmas which are going to be used throughout the paper. After that, we prove similar results for their finite and then countable sums, as well as for linear combinations of their images under the Markov operator. We finish by defining \textit{generalized min-functions} and constructing a sequence of $E_n$ for this new class.

The obtained results do not give a conclusive answer to the question of the (non)-amenability of the Thompson group $F$ since we do not exhaust the class of \textit{all} superharmonic functions, but they might indicate a new direction to research. 

In particular, we do not know how to construct the approximations for infinite convergent sums $\sum\limits_{i=0}^\infty P^{n_i} f_i$. Our proof for finite sums (Lemma \ref{sigen} and Theorem \ref{fourth}) uses the exact expression for iterations of the Markov operator and cannot be easily extended to the infinite case. Finding an approximation for this class could be an interesting question since some of its functions seem to be more closely related to Green's functions.

In section \ref{almost-f2}, we also note that this test can be used to show that some similarly defined actions on other graphs are not extensively amenable.

\textbf{Acknowledgement.} The author is thankful to his advisor Kate Juschenko for helpful research advice, productive discussions and reviewing the presented results.

\section{Overview of Thompson's group}
 Thompson's group $F$ was introduced by Richard Thompson in unpublished handwritten notes in 1965 together with two other related (non-amenable) groups which are not covered by this paper. The definition of Thompson's group $F$, its basic properties with proofs and some more elementary results can be found in various papers and publications: \cite{burillo}, \cite{cfp}, \cite{yeow}.
 
 The question regarding the amenability of $F$ has been open for a long time. If it is amenable, it would be an example of a finitely presented group which is amenable but not elementary amenable (the latter is proven, for instance, in \cite{bcw}). If it is not amenable, it would be an example of a finitely presented non-amenable group not containing the free group $\mathbb{F}_2$ (see \cite{burillo}).
 
 In 2009, a paper published by Azer Akhmedov (\cite{akhmedov}) claimed to prove the non-amenability of $F$, while another paper published the same year by Evgeni Shavgulidze (\cite{shavg}) claimed to prove its amenability. Both proofs turned out to be erroneous.

\section{Extensive amenability as a tool}\label{ext-am}

Extensively amenable actions have several equivalent definitions, see \cite{JMMS}.

Let a discrete group $G$ act on a set $X$ and let $\pfx$ denote the class of all finite subsets of $X$.

\begin{definition}
	Let $G$ act on a set $X$. The action can be canonically extended to $\pfx$ by the rule $g(E)=gE=\{g(x)|x \in E \}$. The original action of $G$ on $X$ is called \emph{extensively amenable} if there is a $G$-invariant mean on $\pfx$ giving the full weight to the collection of subsets containing some (=any) given element of $X$.
\end{definition}

\begin{definition}
	The semidirect product $\pfx \rtimes G$ equipped with multiplication given by $(E,g)(F,h) = (E \Delta gF, gh)$ is called the \emph{Lamplighter group} of the action of $G$ on $X$.
\end{definition}

\begin{definition}\label{ext-lamp}
	An action of $G$ on $X$ is called \emph{extensively amenable} if the action of the Lamplighter group $\pfx \rtimes G$ on $\pfx$, defined by $(E,g)(F) = E \Delta gF$, is amenable.
\end{definition}

Extensively amenable actions are amenable unless $X = \varnothing$, and every action of an amenable group is extensively amenable. Neither converse is true: in Chapter \ref{almost-f2} we show an example of an amenable action which is non extensively amenable, while some examples of extensively amenable actions by non-amenable groups can be found in \cite{JdlS} and \cite{J-book}.

The following lemma states that the set of finite subsets can be replaced with a broader set of finitely supported functions:

\begin{theorem}\label{ext-am-gen}
	Let $G$ act on a set $X$. Then the action is extensively amenable if and only if for some (=any) non-trivial amenable group $A$ the canonical action of $\bigoplus\limits_X A \rtimes G$ on $\bigoplus\limits_X A$ is amenable.
\end{theorem}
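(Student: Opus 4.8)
The plan is to recognize that the statement is really an assertion of \emph{independence of the coefficient group}: since $\pfx = \lampset$ when $A=\mathbb{Z}/2\mathbb{Z}$ (symmetric difference being coordinatewise addition modulo $2$), extensive amenability is exactly the displayed property for $A=\mathbb{Z}/2\mathbb{Z}$, and the theorem asserts that this property is the same for every nontrivial amenable $A$. First I would translate amenability of $\lampprod \curvearrowright \lampset$ into means: a $\lampprod$-invariant mean on $\ell^\infty(\lampset)$ is the same thing as a mean $m$ on $\lampset$ that is invariant both under left translation by $\lampset$ (a left-invariant mean on this amenable group) and under the coordinate-permutation $G$-action $\sigma_g$. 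Call this property $P(A)$. I would then prove the theorem through two implications, using as a bridge the equivalent description of extensive amenability by a $G$-invariant mean $m_0$ on $\pfx$ that charges every family $\{E : x\in E\}$ with full weight: (I) extensive amenability implies $P(A)$ for every amenable $A$; and (II) $P(A)$ for some nontrivial amenable $A$ implies extensive amenability. Inserting $A=\mathbb{Z}/2\mathbb{Z}$ into (II) and feeding the output into (I) closes the loop and yields the ``some $=$ any''.

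For (I), I would fix a left-invariant mean $\mu$ on the amenable group $A$ and, for $f\in\ell^\infty(\lampset)$ and a finite set $E$, average the lamp values over configurations supported on $E$: set $\Phi_f(E)=\mu^{\otimes E}(f|_{A^E})$, a bounded function on $\pfx$, and define $m_A(f)=m_0(\Phi_f)$. Positivity and normalization are immediate, and $G$-invariance follows because the product mean $\mu^{\otimes E}$ is unchanged under relabelling coordinates, so $\Phi_{f\circ\sigma_g}=\Phi_f\circ\sigma_g$ and $m_0$ is $G$-invariant. The one delicate point is left invariance: for $w_0\in\lampset$ and every $E\supseteq\mathrm{supp}(w_0)$, translation by $w_0$ stays inside the subgroup of configurations supported on $E$, so invariance of $\mu^{\otimes E}$ gives $\Phi_{L_{w_0}f}(E)=\Phi_f(E)$; since $m_0$ gives full weight to $\{E:E\supseteq\mathrm{supp}(w_0)\}=\bigcap_{x\in\mathrm{supp}(w_0)}\{E:x\in E\}$, a finite intersection of conull sets, it cannot see the complement and $m_A$ is left invariant.

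For (II), the natural first move is to push a mean $m_A$ witnessing $P(A)$ forward along the $G$-equivariant support map $\mathrm{supp}:\lampset\to\pfx$, which produces a $G$-invariant mean on $\pfx$. Here lies the main obstacle: the support map is not a homomorphism, and the pushforward charges $\{E:x\in E\}$ only with weight $1-p_x$, where $p_x=m_A(\{w:w(x)=e\})\le\tfrac12$ by left invariance (the translate by a nontrivial lamp at $x$ is disjoint and of equal mean), so it fails to have full weight when $A$ is finite. I would overcome this by \emph{amplification}: identifying $\bigoplus_X A^N=\bigl(\lampset\bigr)^N$ and using the tensor mean $m_A^{\otimes N}$ (again left invariant and $G$-invariant), a point $x$ is now lit unless all $N$ coordinates are trivial there, an event of mean $p_x^N\le 2^{-N}$. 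Pushing $m_A^{\otimes N}$ forward by the support map yields $G$-invariant means $n_N$ on $\pfx$ with $n_N(\{E:x\in E\})=1-p_x^N\to 1$ for every $x$ simultaneously, so any weak-$*$ cluster point $n_\infty$ is a $G$-invariant mean charging the supersets of each point with full weight, i.e.\ a certificate of extensive amenability.

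The crux of the whole argument is exactly this last passage from ``almost every lamp lit'' to ``every lamp lit'' by tensorizing and taking a limit; everything else is either the bookkeeping of (I) or the standard verification that pushforwards and tensor products of invariant means remain invariant means. Assembling (I) and (II), and specializing (II) to $A=\mathbb{Z}/2\mathbb{Z}$, gives the equivalence of extensive amenability with $P(A)$ for all (equivalently, some) nontrivial amenable $A$, which is the assertion of the theorem.
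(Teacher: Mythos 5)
Your proposal necessarily takes a different route from the paper, because the paper does not prove Theorem \ref{ext-am-gen} at all: it simply cites Theorem 1.3 of \cite{JMMS} (and Lemma 5.5 of \cite{J-book} for the \emph{only if} direction). What you give is essentially a self-contained reconstruction of that cited argument, and its core is right. In particular your step (II), which you correctly single out as the crux, is correct as written: the iterated tensor mean $m_A^{\otimes N}$ only needs invariance under the product group $\bigl(\lampset\bigr)^N$ and under the diagonal $G$-action, and both do hold for iterated means (no Fubini-type symmetry is needed there); iterated means multiply on rectangle events, giving $m_A^{\otimes N}(\{\text{all } w_i(x)=e\})=p_x^N\le 2^{-N}$ via the disjoint-translate bound $p_x\le\tfrac{1}{2}$ (this is where nontriviality of $A$ enters); and the pushforward under the $G$-equivariant support map followed by a weak-$*$ cluster point indeed produces a $G$-invariant mean on $\pfx$ giving every $\{E: x\in E\}$ full weight. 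A pleasant by-product of your architecture is that, specializing (II) to $A=\mathbb{Z}/2\mathbb{Z}$, it also proves the equivalence of the paper's two stated definitions of extensive amenability (the invariant-mean definition and Definition \ref{ext-lamp}), which the paper likewise leaves to the references.

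There is, however, one genuine gap, in step (I): the claim that ``the product mean $\mu^{\otimes E}$ is unchanged under relabelling coordinates.'' Means, unlike probability measures, have no canonical product; the generally available construction is iterated integration, and iterated product means are \emph{not} permutation-invariant in general, because Fubini fails for invariant means. Concretely, on $A=\mathbb{Z}$ let $\mu$ be a weak-$*$ cluster point of the uniform measures on $\{n,\dots,2n\}$ and let $f(x,y)=\mathbf{1}[y\ge x]$; then the two iterated means of $f$ are $1$ and $0$, so $\mu\otimes\mu$ is not invariant under the coordinate swap. Since the $G$-invariance of your $m_A$ rests exactly on the identity $(\sigma_g)_*\mu^{\otimes E}=\mu^{\otimes gE}$, and $g$ has no reason to respect whatever enumerations define the iterated products, step (I) is unjustified whenever $A$ is infinite --- and (I) is precisely the direction that must cover \emph{every} nontrivial amenable $A$ (the paper's intended application even takes $A\cong\mathbb{Z}$, the multiplicative powers of $2$); for finite $A$, in particular $\mathbb{Z}/2\mathbb{Z}$, there is no issue, as the uniform measures are canonical. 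The repair is standard and local: define $\mu^{\otimes E}$ as a weak-$*$ cluster point of uniform measures on F\o lner boxes $F_i^E\subset A^E$, which are exactly permutation-invariant, asymptotically $A^E$-invariant, and relabelling-equivariant by construction; alternatively, for each $n$ symmetrize one $A^n$-invariant mean over the finite group $S_n$ and transport it to $A^E$ by an arbitrary bijection, which the $S_n$-invariance makes well defined. With that fix, your proof is complete.
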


This lemma is a direct corollary of Theorem 1.3 from \cite{JMMS}. For the \textit{only if} direction, a simpler proof is given in Lemma 5.5 of \cite{J-book}.

As mentioned before, it is a well-known open question to decide whether Thompson's group $F$ is amenable. In this chapter, we intend to paraphrase the question in terms of extensive amenability of a certain action of $F$. In its turn, extensive amenability can be reformulated in terms of classic amenability of the action produced by the corresponding Lamplighter group by Definition \ref{ext-lamp}. The core idea is to apply Northshield's test to the action.

The following theorem was proved in \cite{JMMS}:

\begin{theorem}\label{embeddings}
	Let $G \curvearrowright X$ be an extensively amenable action and $A$ a non-trivial amenable group. If there exists an embedding $G \hookrightarrow \lampprod $ of the form $g \rightarrow (c(g), g)$ with the property that the kernel $\{g \in G : c(g)=id \}$ is an amenable subgroup of $G$, then $G$ is amenable.
\end{theorem}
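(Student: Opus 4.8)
The plan is to reduce the statement to a standard fact about amenable groups: a group is amenable as soon as it contains a subgroup that is both amenable and co-amenable. The natural candidate is the subgroup $K = \{g \in G : c(g) = id\}$, which is amenable by hypothesis; essentially all of the work will go into showing that it is co-amenable in $G$.

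First I would unwind the hypotheses. Write $\Gamma = \lampprod$ and let it act on $\lampset$ as in Definition \ref{ext-lamp}, so that $(\xi,g)\cdot\eta = \xi\cdot(g\cdot\eta)$. Since $G \curvearrowright X$ is extensively amenable, Theorem \ref{ext-am-gen} guarantees that this action of $\Gamma$ on $\lampset$ is amenable; equivalently there is a $\Gamma$-invariant mean $m$ on $\ell^\infty(\lampset)$. The assumption that $g \mapsto (c(g),g)$ is a homomorphism is exactly the cocycle identity $c(gh) = c(g)\cdot(g\cdot c(h))$, and the $G$-coordinate shows $\phi$ is automatically injective; thus $\phi(G) \le \Gamma$ is a copy of $G$ acting on $\lampset$ by $\phi(g)\cdot \eta = c(g)\cdot(g\cdot\eta)$.

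Next I would locate $K$ inside this picture. A direct computation with the cocycle identity shows that the $\phi(G)$-orbit of the identity configuration is exactly $c(G)$, that the stabilizer of the identity is precisely $K$, and that $gK \mapsto c(g)$ is a $G$-equivariant bijection of $G/K$ onto this orbit. Restricting the $\Gamma$-invariant mean $m$ to $\phi(G)$ yields a $\phi(G)$-invariant mean on $\ell^\infty(\lampset)$, and the goal becomes to convert it into a $G$-invariant mean on $\ell^\infty(G/K)$, i.e. to prove that $K$ is co-amenable in $G$. Granting this, I would finish by the composition-of-means argument: feeding a $K$-invariant mean on $\ell^\infty(K)$ (which exists since $K$ is amenable) into a $G$-invariant mean on $\ell^\infty(G/K)$ along a set-theoretic section $G/K \to G$ produces a left-invariant mean on $\ell^\infty(G)$, so $G$ is amenable.

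The hard part will be the transfer of the mean to a single orbit. One cannot simply restrict $m$ to $c(G) \cong G/K$, because for a general amenable action the invariant mean may assign zero weight to a prescribed orbit; for instance, a free group fixing one adjoined point acts amenably on the disjoint union (the Dirac mass at the fixed point is invariant) while acting freely, hence non-amenably, on its own translation orbit. So co-amenability of $K$ is not a formal consequence of amenability of the action together with amenability of the stabilizer, and the specific lamplighter structure must be used. Here I expect to exploit that the base $N = \lampset$ is itself amenable and normal in $\Gamma$, acting by translations, together with the recurrence features encoded in extensive amenability, to force the mass of a suitable Reiter sequence for the $\phi(G)$-action to concentrate along the orbit $c(G)$ in the limit. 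Making this concentration rigorous — equivalently, showing directly that $\phi(G)$ is co-amenable in $\Gamma$ even though it is not conjugate to the obvious complement $\{(id,g) : g \in G\}$ — is the crux, and it is precisely the content drawn from \cite{JMMS}.
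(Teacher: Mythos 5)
There is nothing in the paper to compare your argument against: Theorem \ref{embeddings} is stated with the remark that it ``was proved in \cite{JMMS}'' and no proof is given. So your proposal has to stand on its own, and it does not: it is a correct reduction followed by an explicit deferral of the only substantive step. Your framing is sound --- it does suffice to show that $K=\ker c$ is co-amenable in $G$, since a $G$-invariant mean on $G/K$ composed with a left-invariant mean on $K$ along a section yields a left-invariant mean on $G$ --- and your free-group example correctly shows that one cannot obtain this by restricting the invariant mean of the $\phi(G)$-action on $\lampset$ to the orbit of the trivial configuration. But when you write that making the concentration rigorous ``is precisely the content drawn from \cite{JMMS},'' you have conceded that the theorem is left unproved; what remains at that point \emph{is} the theorem. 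Moreover, your two hints about how to close the gap are misdirected. Co-amenability of $\phi(G)$ in $\Gamma=\lampprod$ is neither equivalent to nor sufficient for what you need: co-amenability of a subgroup says nothing about the amenability of that subgroup (every group is co-amenable in itself), so it only yields the circular statement that $\Gamma$ is amenable if and only if $G$ is, and converting it into co-amenability of $K$ in $G$ runs into exactly the orbit-charging obstruction you yourself identified. Also, no mean invariant under the full lamplighter group can concentrate near the orbit in the sense of being supported on large configurations: for $A=\mathbb{Z}/2$, invariance under the switch at $x$ forces such a mean to give weight $\tfrac12$ to $\{E : x \in E\}$.

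The missing idea is to abandon the lamplighter-invariant mean entirely and use the other characterization of extensive amenability (the first definition in Section \ref{ext-am}): a $G$-invariant mean $m$ on $\pfx$ for the \emph{untwisted} action $E \mapsto gE$ giving full weight to $\{E : E \supseteq F\}$ for every finite $F \subseteq X$. One then transfers functions, not means: since $hK \mapsto c(h)$ is injective, for finite $A$ the set $D_E = \{hK \in G/K : \operatorname{supp} c(h) \subseteq E\}$ is finite and contains $eK$, so one may define $\hat f(E) = |D_E|^{-1}\sum_{hK \in D_E} f(hK)$ for $f \in \ell^\infty(G/K)$ (for infinite amenable $A$ the uniform average is replaced by an averaging procedure over $A^E$, which is where amenability of $A$ enters). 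The cocycle identity gives $s^{-1}D_E = D_{s^{-1}E}$ whenever $E \supseteq \operatorname{supp} c(s)$, a collection of full $m$-measure; hence $\widehat{s\cdot f}$ and $E \mapsto \hat f(s^{-1}E)$ agree $m$-almost everywhere, and $f \mapsto m(\hat f)$ is a $G$-invariant mean on $G/K$. This yields co-amenability of $K$, and with amenability of $K$, amenability of $G$. Note how the hypotheses enter: amenability of $A$ and finiteness of $E$ make the local averages exist, amenability of $K$ finishes via the fiberwise composition, and the full-weight form of extensive amenability kills the equivariance errors. This local-averaging construction, not concentration of a Reiter sequence along $c(G)$, is the step your proposal is missing.
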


\begin{corollary}
	Thompson's group $F$ is amenable if and only if its action on $X = \mathbb{Z}[\frac{1}{2}] \cap (0,1)$ is extensively amenable.
\end{corollary}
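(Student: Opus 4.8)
The plan is to treat the two implications separately, relying on the two facts already in hand: first, the statement recorded just before Theorem \ref{embeddings} that every action of an amenable group is extensively amenable; second, Theorem \ref{embeddings} itself. The forward implication is then immediate. If $F$ is amenable, then, being an action of an amenable group, its action on $X = \mathbb{Z}[\frac{1}{2}] \cap (0,1)$ is extensively amenable, and there is nothing further to prove.

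The content lies in the reverse implication, which I would obtain as a single application of Theorem \ref{embeddings} with $G = F$ and $A = \mathbb{Z}$. Assuming the action on $X$ is extensively amenable, it therefore suffices to exhibit an embedding $F \hookrightarrow \bigoplus_X \mathbb{Z} \rtimes F$ of the form $g \mapsto (c(g), g)$ whose associated cocycle $c$ has amenable kernel. I would work with $F$ in its standard realization as the group of orientation-preserving piecewise-linear homeomorphisms of $[0,1]$ with finitely many dyadic breakpoints and all slopes equal to integer powers of $2$; the action on $X$ is then the restriction to dyadic rationals. For $g \in F$ and $x \in X$, write $g'_{\pm}(x)$ for the one-sided derivatives and set $j_g(x) = \log_2 g'_+(x) - \log_2 g'_-(x) \in \mathbb{Z}$, the jump in the logarithmic slope at $x$; this is supported on the finite breakpoint set of $g$. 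The candidate cocycle is $c(g) \in \bigoplus_X \mathbb{Z}$ given by $c(g)(x) = j_g(g^{-1}x)$, which reads off the slope jumps of $g$ at their image points and is again finitely supported.

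The key verification is that $c$ is a $1$-cocycle for the permutation action of $F$ on $\bigoplus_X \mathbb{Z}$, meaning $c(gh) = c(g) + g \cdot c(h)$ with $(g \cdot \phi)(x) = \phi(g^{-1}x)$; equivalently, that $g \mapsto (c(g), g)$ is a homomorphism. This reduces to the chain rule: since $h$ is increasing we have $(gh)'_{\pm}(x) = g'_{\pm}(h(x)) \, h'_{\pm}(x)$, and taking logarithms yields $j_{gh}(x) = j_g(h(x)) + j_h(x)$, which after the substitution $x \mapsto (gh)^{-1}x$ rearranges into the cocycle identity. The map $g \mapsto (c(g), g)$ is injective automatically, since the second coordinate recovers $g$, so it is the desired embedding. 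Finally, an element of the kernel $\{g : c(g) = 0\}$ has no slope jumps, hence constant logarithmic slope, hence is a globally affine homeomorphism of $[0,1]$; fixing the endpoints $0$ and $1$ forces it to be the identity. Thus the kernel is trivial, in particular amenable, and Theorem \ref{embeddings} delivers the amenability of $F$.

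The genuinely substantive input that one must supply by hand is exactly this explicit cocycle with amenable kernel; everything heavier is black-boxed in Theorem \ref{embeddings}. Within the construction, the routine parts are the finiteness of the supports and the chain-rule computation, while the one place that genuinely requires care — and where an off-by-one in the left/right convention is easy to commit — is matching the translated argument in $j_{gh}(x) = j_g(h(x)) + j_h(x)$ to the semidirect-product multiplication $(\phi, g)(\psi, h) = (\phi + g \cdot \psi, gh)$ on $\bigoplus_X \mathbb{Z} \rtimes F$, so that $c$ emerges as an honest $1$-cocycle rather than a crossed map correct only up to a coboundary. That compatibility is precisely what makes $g \mapsto (c(g), g)$ a homomorphism, and hence what licenses the appeal to Theorem \ref{embeddings}.
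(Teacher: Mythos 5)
Your proof is correct and takes essentially the same route as the paper: the forward direction from the general fact that actions of amenable groups are extensively amenable, and the reverse direction by applying Theorem \ref{embeddings} to the slope-jump cocycle with trivial (hence amenable) kernel, the paper's choice of $A$ being the multiplicative powers of $2$ rather than $\mathbb{Z}$, which is the same group up to isomorphism. If anything, your write-up is more careful on the one delicate point: the paper writes $c(g)(x)=g'_{+}(x)/g'_{-}(x)$ and asserts the cocycle identity ``can be checked,'' whereas your evaluation of the jump at $g^{-1}x$ is exactly the twist needed to make $g\mapsto(c(g),g)$ an honest homomorphism for the left action.
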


\begin{proof}
	The \textit{only if} part is obvious because any action of an amenable group is extensively amenable. 	
	
	For the \textit{if} part, the idea is to construct an embedding $F \hookrightarrow \bigoplus\limits_X A \rtimes F$ of the form $g \rightarrow (c(g),g)$ and apply Theorem \ref{embeddings}. 
	
	Let $A$ be the group of multiplicative integer powers of 2 and let $X$ be the set $\mathbb{Z}[\frac{1}{2}] \cap (0,1)$ of dyadic numbers. Define $c(g) : X \rightarrow A$ by $c(g)(x)=\frac{g_{+}^{'} (x)}{g_{-}^{'} (x)}$. It can be checked that the embedding $g \rightarrow (c(g),g)$ is a well-defined cocycle. The kernel of the cocycle is trivial and hence amenable, which proves the amenability of $F$ under the initial assumptions. 
	
\end{proof}

\section{Schreier graph of Thompson's group}

We consider the left action of $F$ on the set of dyadic rationals $X = \mathbb{Z}[\frac{1}{2}] \cap (0,1)$. The Schreier graph of this action can depend on the generating set. Some example of such graphs can be found in \cite{notstram} and \cite{savchuk}.

The following lemma is proven in several overview papers of the group, for example, \cite{cfp}.

\begin{lemma}\label{f2gen}
	Let $g_0,g_1 \in F$ be defined by:
	
	\[
	g_0(x) = \begin{cases}
	\frac{x}{2}, & 0 \le x \le \frac{1}{2} \\
	x-\frac{1}{4}, & \frac{1}{2} < x \le \frac{3}{4} \\
	2x - 1, & \frac{3}{4} < x \le 1
	\end{cases}
	\]
	
	and 
	
	\[
	g_1(x) = \begin{cases}
	x, & 0 \le x \le \frac{1}{2} \\
	\frac{x}{2}+\frac{1}{4}, & \frac{1}{2} < x \le \frac{3}{4} \\
	x-\frac{1}{8}, & \frac{3}{4} < x \le \frac{7}{8} \\
	2x - 1, & \frac{7}{8} < x \le 1.
	
	\end{cases}
	\]
	
	Then $g_0$ and $g_1$ generate $F$.
\end{lemma}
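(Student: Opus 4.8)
The plan is to recognize $g_0$ and $g_1$ as the two standard generators of $F$, usually denoted $x_0$ and $x_1$, and then reduce the statement to the classical fact that the infinite family $x_0, x_1, x_2, \ldots$ generates $F$. First I would set $x_0 = g_0$ and $x_1 = g_1$ and verify directly from the piecewise-linear formulas that $g_1$ is the identity on $[0,1/2]$ and, on the remaining interval, is the affine rescaling of $g_0$ via the map $t \mapsto (t+1)/2$ carrying $[0,1]$ onto $[1/2,1]$. This is exactly the standard description of the second generator, so the identification costs only a routine check of the four branches of $g_1$ against the three branches of $g_0$.

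Next I would introduce, for each $n \ge 2$, the element $x_n = x_0^{-(n-1)} x_1 x_0^{\,n-1}$, which is supported on $[1 - 2^{-n}, 1]$ where it acts as a rescaled copy of $x_0$. Equivalently, these satisfy the conjugation relations $x_{n+1} = x_0^{-1} x_n x_0$ for $n \ge 1$. By construction every $x_n$ lies in the subgroup $H = \langle g_0, g_1 \rangle$, so it suffices to prove that the full family $\{x_n : n \ge 0\}$ generates $F$. This reduction is immediate once the conjugation relations are in hand.

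For that last step I would use the tree-pair (equivalently, dyadic-subdivision) description of $F$: every $f \in F$ is encoded by a pair $(T_-, T_+)$ of finite rooted binary trees with equally many leaves, where $f$ maps the standard dyadic intervals cut out by $T_-$ affinely onto those cut out by $T_+$. The key observation is that the elementary operation of attaching a single caret at the $n$-th leaf corresponds to multiplication by a specific $x_n^{\pm 1}$. Building $T_-$ and $T_+$ up from the trivial one-leaf tree by a sequence of caret attachments therefore expresses $f$ as a word in the $x_n$, all of which lie in $H$; hence $H = F$. I would formalize this as an induction on the total number of carets, peeling off the last-attached caret at each stage.

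The main obstacle is the bookkeeping in this last step: one must match each caret-attachment move precisely to the correct generator $x_n$ and track how the subdivisions transform under left multiplication, which is where the combinatorics of the normal form genuinely lives. By contrast, the passage from the infinite generating family to the pair $\{g_0, g_1\}$ is essentially formal, so nearly all of the work is concentrated in establishing that the standard infinite family generates $F$.
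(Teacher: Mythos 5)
Your proposal is correct, and it is exactly the classical argument: identify $g_0,g_1$ with the standard generators $x_0,x_1$, pass to the infinite family $x_n = x_0^{-(n-1)}x_1x_0^{\,n-1}$ supported on $[1-2^{-n},1]$, and prove by caret/tree-pair induction that this family generates $F$. The paper itself gives no proof of this lemma --- it explicitly defers to the standard expository references such as \cite{cfp} --- and the proof found there is essentially the one you outline, so there is nothing to reconcile beyond noting that your remaining ``bookkeeping'' step (matching caret attachments to the generators $x_n^{\pm 1}$) is precisely the normal-form argument carried out in that reference.
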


Let $g_0$ and $g_1$ be the generators of $F$ as defined in Lemma \ref{f2gen}.

Take the Schreier graph $X$ corresponding to generators $a = g_1 g_0^{-1}$ and $b = g_1$, with the right-to-left order of multiplication.

The graph consists of a binary tree (which we call its \textit{skeleton} and denote by $X_{sk}$) and \textit{hairs} attached to its vertices, two to the root (point $p=\frac{5}{8}$) and one to any other vertex. From the transitivity properties of Thompson's group it follows that each dyadic number in $\mathbb{Z}[\frac{1}{2}] \cap (0,1)$ has a unique corresponding vertex in the graph.

\begin{figure}[h]
	\centering
	\includegraphics[width=10cm]{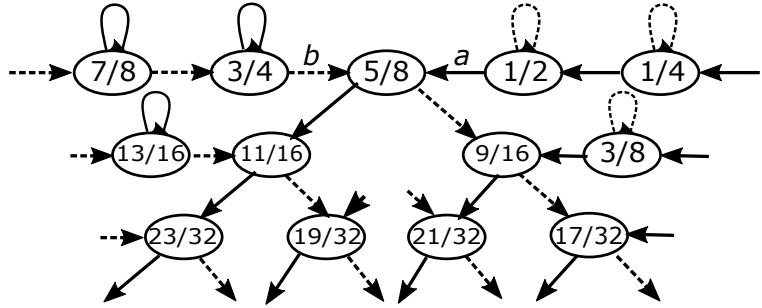}
	\caption{Schreier graph of $F$ acting on $\mathbb{Z}[\frac{1}{2}]$}
\end{figure}

Further on, we will only be interested in the general structure of the graph. The exact dyadic numbers corresponding to individual vertices are of little interest to us.

We also note that the group $\pfx \rtimes F$ is generated by a finite set $\\ \{ (\varnothing, a), (\varnothing, b), (\{p\}, e) \}$. When no confusion occurs, we may abuse the notation and call them $a$, $b$ and $\sigma$ respectively. Roughly speaking, if $E \in \pfx$ is a set, $a$ and $b$ move all its points as the generators $a$ and $b$ of the Thompson's group respectively, and $\sigma$ makes a "switch" at the point $p$ by removing it from the set if it is contained there or adding it to the set otherwise.

\begin{theorem}
	The action of $F$ on $X$ is amenable.
\end{theorem}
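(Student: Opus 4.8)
The plan is to confirm amenability of the action via Kesten's criterion, Theorem~\ref{kesten}: since the action is transitive and $\mu$ is finitely supported, symmetric and generating, it is enough to produce a Følner sequence in the Schreier graph $X$, i.e.\ finite vertex sets $F_k$ with $|\partial F_k|/|F_k|\to 0$, where $\partial F_k$ denotes the edges leaving $F_k$. A Følner sequence forces the spectral radius $\rho(X,\mu)$ to equal $1$, whence amenability by Theorem~\ref{kesten}. (Equivalently one could verify $\inf\{\lambda:\exists f>0,\ Pf\le\lambda f\}=1$ from the corollary to Woess's Lemma~7.2, but the Følner route is the most transparent given the explicit description of $X$.)

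The first step is bookkeeping. Using the generators $a=g_1 g_0^{-1}$ and $b=g_1$, I would record exactly how $a^{\pm 1},b^{\pm 1}$ act on the skeleton $X_{sk}$ and on the hairs: which moves walk a vertex up or down the binary tree, which move along a hair, and hence the degree and incident edges of every vertex. The degrees should come out to $4$ everywhere (the root has two tree-children plus two hairs, every other skeleton vertex has a parent, two children and one hair), matching the four generator slots $a,a^{-1},b,b^{-1}$. The decisive structural input is that the hairs are \emph{infinite} rays rather than bounded pendants; this is precisely what rescues amenability despite the exponentially branching skeleton.

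Granting this, I would take as Følner sets $F_{n,m}$ the union of the ball of radius $n$ about the root $p$ in $X_{sk}$ (a finite subtree with $\sim 2^{n+1}$ vertices) together with the initial segment of length $m$ of each hair attached to a vertex of that subtree. The edge boundary of $F_{n,m}$ then consists only of the skeleton edges crossing from level $n$ to level $n+1$ and the single outgoing edge at the far tip of each truncated hair; both families have cardinality $O(2^{n})$, so $|\partial F_{n,m}|=O(2^{n})$, whereas the volume is $|F_{n,m}|\sim 2^{n+1}m$. Thus $|\partial F_{n,m}|/|F_{n,m}|=O(1/m)\to 0$, and choosing $m=m_k\to\infty$ yields a Følner sequence, giving $\rho(X,\mu)=1$ and amenability.

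The hard part will not be the isoperimetric count but the structural verification underlying it: one must prove rigorously, from the action of $a$ and $b$ on $\mathbb{Z}[\frac{1}{2}]\cap(0,1)$, that $X$ genuinely is a binary tree decorated with infinite hairs, that the interior hair vertices carry no hidden edges back into the skeleton or across to other hairs, and that no generator identifies distinct hairs. Were some hairs bounded, or were the hairs themselves to branch, the volume estimate above would collapse and amenability would have to be argued by other means; so the core of the proof is pinning down the exact local geometry of the Schreier graph that the figure only suggests.
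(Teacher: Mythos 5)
Your proposal is correct and takes essentially the same approach as the paper, whose entire proof is the one-line observation that arbitrarily long segments of hairs form F\o lner sets (the hairs being infinite rays is exactly the structural point you emphasize). The only difference is that your sets $F_{n,m}$ also include the skeleton ball, which is unnecessary: a single hair segment of length $m$ already has edge boundary $O(1)$ against volume $m$, so the paper's choice is a simpler instance of the same isoperimetric mechanism.
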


\begin{proof}\label{amenable-action}
	We can take arbitrarily long subsequences of hairs as F\o lner sets.
\end{proof}

\section {Northshield's criterion extended to group actions}\label{north}

The amenability criterion below is a generalization of the criterion proved by Sam Northshield and published in \cite{northshield}. The original proof only deals with amenable groups, whereas we extend the statement to all amenable actions of a group $G$ on a graph $X$. The structure of our proof is based on the original.

Here the measure $\mu$ on $G$ is assumed to be generating, symmetric and aperiodic in the sense of Markov chains, the action of $G$ is left and $P$ is the Markov operator with respect to $\mu$.

The following fact is proven in \cite{woess} (Lemma 7.2, p. 81):

\begin{lemma}\label{one}
	The action of $G$ on $X$ is amenable if and only if $\inf\{ \lambda : \exists f > 0: \ Pf \le \lambda f \} = 1$.
\end{lemma}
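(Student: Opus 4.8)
The plan is to derive the statement by combining Kesten's criterion (Theorem \ref{kesten}) with a variational description of the spectral radius, namely the identity
\[
\rho(X,\mu) \;=\; \inf\{\lambda : \exists f > 0 \text{ with } Pf \le \lambda f\}.
\]
Since $\mu$ is symmetric, $P$ is self-adjoint on $l^2(X)$, so its operator norm equals its spectral radius $\rho(X,\mu) = \lim_n \sqrt[n]{P_n(x,x)}$; by Theorem \ref{kesten} the action is amenable precisely when this quantity equals $1$. Thus the whole lemma reduces to proving the displayed identity, after which amenability is equivalent to the infimum being $1$.

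For the inequality $\rho(X,\mu) \le \inf\{\dots\}$ I would use positivity of $P$. The entries $P_n(x,y)$ are nonnegative, so if $f > 0$ satisfies $Pf \le \lambda f$, then iterating gives $P^n f \le \lambda^n f$ pointwise. Keeping only the diagonal term in $(P^n f)(x) = \sum_y P_n(x,y) f(y)$ yields $P_n(x,x) f(x) \le (P^n f)(x) \le \lambda^n f(x)$, and since $f(x) > 0$ this forces $P_n(x,x) \le \lambda^n$. Taking $n$-th roots and passing to the limit, which exists by aperiodicity, gives $\rho(X,\mu) \le \lambda$ for every admissible $\lambda$, hence $\rho(X,\mu) \le \inf\{\dots\}$.

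For the reverse inequality I would exhibit, for each $\lambda > \rho(X,\mu)$, a strictly positive function with $Pf \le \lambda f$. The natural choice is the Green's function at $z = 1/\lambda$: set $f(x) = G(x,p|z)$ for a fixed base point $p$. Because $z < r(X,\mu) = \rho(X,\mu)^{-1}$ the series converges, and because $\mu$ is generating, for every $x$ some $P_n(x,p)$ is positive, so $f > 0$ everywhere. A short computation with the convolution identity $\mu^{*n}*\mu = \mu^{*(n+1)}$ shows $zPf(x) = f(x) - \delta_{x,p}$, so that $Pf \le z^{-1} f = \lambda f$. This bounds the infimum by every $\lambda > \rho(X,\mu)$, giving $\inf\{\dots\} \le \rho(X,\mu)$ and completing the identity.

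I expect the Green's function step to be the main obstacle, since it requires simultaneously guaranteeing finiteness of $f$ (which needs $z$ strictly below the radius of convergence), strict positivity of $f$ (which relies on the generating hypothesis), and the resolvent-type identity relating $Pf$ and $f$ via a one-step shift of the defining series. The opposite direction is essentially formal once positivity of $P$ is invoked. This is, of course, the content of \cite{woess}, Lemma 7.2, so an alternative would be to cite it directly; the argument above is the self-contained version.
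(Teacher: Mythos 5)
Your proposal is correct and takes essentially the same route as the paper: the paper obtains this lemma by citing Woess's Lemma 7.2 for the identity $\rho(X,\mu) = \inf\{\lambda : \exists f > 0,\ Pf \le \lambda f\}$ and combining it with Kesten's criterion (Theorem \ref{kesten}), which is exactly your reduction. The only difference is that you additionally supply a self-contained proof of the cited identity (the iteration/diagonal bound for one direction and the Green's function $f(\cdot) = G(\cdot,p|1/\lambda)$ with the resolvent identity for the other), both steps of which are sound, whereas the paper leaves this to the reference.
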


\begin{theorem}\label{north-strong}
	The action of $G$ on $X$ is amenable if and only if for all positive superharmonic functions $f$ with respect to $\mu$ there exists a sequence $(E_n)$ in $X$ such that for all $x \in G$, $\frac{f(xE_n)}{f(E_n)} \rightarrow 1$ as $n \rightarrow \infty$.
\end{theorem}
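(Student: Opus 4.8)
The plan is to route both directions through Lemma \ref{one}, which identifies amenability of the action with the equality $\inf\{\lambda : \exists f>0,\ Pf \le \lambda f\}=1$. The reverse implication is the quick one. Suppose the approximation property holds for every positive superharmonic $f$, yet the action is not amenable; then the infimum above is some $c<1$, witnessed by a strictly positive $h$ with $Ph \le c\,h$. Such an $h$ is in particular superharmonic, so the hypothesis supplies a sequence $(E_n)$ with $h(xE_n)/h(E_n)\to 1$ for every $x \in G$. Averaging against $\mu$ then gives $Ph(E_n)/h(E_n)=\sum_x \mu(x)\,h(xE_n)/h(E_n)\to 1$, which is incompatible with $Ph(E_n)/h(E_n)\le c<1$. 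Hence the infimum equals $1$ and the action is amenable.

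For the forward direction I assume amenability and fix a positive superharmonic $f$; the goal is to manufacture base points at which $f$ becomes asymptotically flat. The first step is to upgrade amenability to the statement that, for each fixed $n$, $\sup_E \pn f(E)/f(E)=1$, where $\pn$ denotes the $n$-step operator. This is where Lemma \ref{one} does its work: if the supremum were some $\varepsilon^n<1$, so that $\pn f \le \varepsilon^n f$ pointwise, then the geometric combination $g=\sum_{0\le i\le n-1}\varepsilon^{-i}P^{(i)}f$ would satisfy $Pg-\varepsilon g = \varepsilon^{1-n}\pn f-\varepsilon f\le 0$, i.e.\ $Pg \le \varepsilon g$ with $\varepsilon<1$, contradicting that the infimum in Lemma \ref{one} is $1$.

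The decisive device comes next: a square-root change of function. For a point $E$ I set $f_E(x)=\sqrt{f(xE)/f(E)}$ as a function of $x\in G$, noting $f_E(e)=1$. Because $t\mapsto t^{1/2}$ is concave, Jensen's inequality keeps $f_E$, $f_E^2$, and $f^{1/2}$ all superharmonic, and a regrouping of the $n$-step sum by $F=xE$ yields the identity $\pn f_E(e)=\pn[f^{1/2}](E)/f(E)^{1/2}$. Feeding the first step, applied now to the superharmonic function $f^{1/2}$, into this identity gives $\sup_E \pn f_E(e)=1$ for every $n$. Since $\pn f_E(e)$ is non-increasing in $n$ (a superharmonic function only decreases under $P$, and $P$ preserves superharmonicity), a diagonal selection produces base points $E(k)$ with $\pn f_{E(k)}(e)\to 1$ simultaneously for all $n$. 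At these points I would run the sandwich $1=f_{E(k)}(e)^2\ge \pn[f_{E(k)}^2](e)\ge[\pn f_{E(k)}(e)]^2\to 1$, whose outer terms expand into $\sum_x \mu^{*n}(x)\big(f_{E(k)}(x)-1\big)^2\to 0$. Because $\mu$ is generating, the supports of the $\mu^{*n}$ exhaust $G$, so this forces $f_{E(k)}\to 1$ pointwise, i.e.\ $f(xE(k))/f(E(k))\to 1$ for each $x$ --- the required sequence.

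I expect the main obstacle to be precisely the square-root trick in the third paragraph, since it is what converts a soft averaged statement ($\sup_E \pn f(E)/f(E)=1$) into genuine pointwise flatness. Making it work hinges on two things at once: verifying that the concave reparametrization preserves superharmonicity through Jensen, and correctly matching the group-side quantity $\pn f_E(e)$ with the space-side quantity $\pn[f^{1/2}](E)$ via the regrouping of transition weights. Once this bridge is in place, the first step's contradiction argument and the final $L^2$ expansion are comparatively mechanical.
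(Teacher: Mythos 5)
Your proposal is correct and mirrors the paper's own proof step for step: the same use of Lemma \ref{one} in both directions, the same geometric-sum contradiction establishing $\sup_E \pn f(E)/f(E)=1$, the same square-root function $f_E$ with Jensen's inequality, the identity $\pn f_E(e)=\pn[f^{1/2}](E)/f(E)^{1/2}$, the diagonal choice of $E(k)$, and the final $L^2$ expansion forcing pointwise convergence under the generating hypothesis. Your reverse direction is in fact slightly more explicit than the paper's (naming the witness $h$ with $Ph\le ch$), but it is the same argument.
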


\begin{proof}
	Let $G$ act amenably on $X$ and let $f$ be a positive superharmonic function. Define $P^{(n)}$ to be the $n$-fold convolution of $P$ with itself and suppose that $\sup\limits_E \frac{\pn f(E)} {f(E)} \neq 1$. Then there exists $\varepsilon \in (0,1) $ such that $\pn f \le \varepsilon^n f$. Let $g = \sum\limits_{0 \le i \le n-1} \frac{P^{(i)}f}{\varepsilon^i}$. Then $Pg - \varepsilon g = \frac{\pn f(E)}{\varepsilon^{n-1}} - \varepsilon f \le 0 $, which is, by Lemma \ref{one}, a contradiction. Hence $\sup\limits_E \frac{\pn f(E)} {f(E)} = 1$.
	
	For $E \in X$, define $f_E(x)=\sqrt{\frac{f(xE)}{f(E)}}$, where $x \in G$. $f_E^2$ is positive superharmonic, since \[ \sum\limits_g \mu(g) \frac{f(gxE)}{f(E)} = \frac{1}{f(E)} \sum\limits_g  \mu(g) f(g \cdot xE) =  \frac{Pf(xE)}{f(E)} \le \frac{f(xE)}{f(E)}. \] 
	
	Since $t \rightarrow t^{1/2}$ is an increasing concave function, by Jensen's inequality $f_E$ is also positive and superharmonic. Note that
	
	\begin{multline*}
		\pn f_E(e) = \sum\limits_x \mu^{(n)} (x) f_E(x) = \sum\limits_x \mu^{(n)} (x) \sqrt{\frac{f(xE)}{f(E)}} = \\ = \sum\limits_F \pn (E,F) \sqrt{\frac{f(F)}{f(E)}} = \frac{\pn [f^{1/2}](E)}{f(E)^{1/2}} .
	\end{multline*}
	
	Thus, $\sup\limits_E \pn f_E(e) = 1$.
	
	Since, for all $E$, $\pn f_E(e)$ is decreasing as a function of $n$, we can choose $E(k)$ such that, for all $n$, $\pn f_{E(k)}(e) \rightarrow 1$ as $k$ goes to infinity. Then we have: 
	
	$1 = \left( f_{E(k)}(e) \right)^2 \ge \pn [f_{E(k)}]^2(e) \ge [\pn f_{E(k)}(e)]^2 \rightarrow 1$. Thus:
	
	$\sum\limits_x \mu^{(n)} (x) \left(f_{E(k)}(x)-1\right)^2 = \pn [f_{E(k)}]^2(e) - 2 \pn f_{E(k)}(e)+1 \rightarrow 0$, $k \rightarrow \infty$. Since $\mu$ is aperiodic, we have $f_{E(k)} \rightarrow 1$ pointwise.
	
	To prove the opposite direction, assume that for every positive superharmonic function $f$ there is a sequence $(E_n)$ satisfying the condition of the theorem. Then $\frac{Pf(E_n)}{f(E_n)} = \sum\limits_x \mu(x) \frac{f(xE_n)}{f(E_n)} \rightarrow 1$, which implies amenability by Lemma \ref{one}.
	
\end{proof}

In particular, all bounded harmonic functions have this approximation.

\begin{lemma}
	Let $h$ be a bounded positive harmonic function with respect to the simple random walk on a locally finite connected graph $X$. Then there is a sequence $x_n$ in $X$ such that for any word $g$ of generating elements and their inverses the sequence $\frac{h(gx_n)}{h(x_n)}$ converges to 1.
\end{lemma}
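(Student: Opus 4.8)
The plan is to exploit the maximum-principle flavour of harmonicity: a bounded harmonic function that comes within $\varepsilon$ of its supremum at a single vertex must in fact be uniformly close to that supremum on a combinatorial ball whose radius grows as $\varepsilon \to 0$. First I would normalize so that $\sup_X h = 1$ (possible after scaling, since $h$ is positive and bounded) and dispose of the trivial case in which $h$ is constant. Because $1$ is the supremum, for every $n$ there is a vertex $y_n$ with $h(y_n) > 1 - \varepsilon_n$, where $\varepsilon_n$ is a small parameter to be fixed at the end of the argument.

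The key step is a one-step propagation estimate. Writing $d = |S|$ for the finite number of generators and their inverses, every vertex of the Schreier graph has degree at most $d$, and harmonicity gives $h(y) = \frac{1}{d}\sum_{s} h(sy)$ as an average of values each bounded above by $1$. Hence if $h(y) > 1 - \delta$, then for any single generator $s$ one has $h(sy) \ge d\,h(y) - (d-1) > 1 - d\delta$, since the remaining $d-1$ terms contribute at most $d-1$ in total. Iterating this inequality along a word $g = s_1 \cdots s_k$ of length $k \le n$, where each successive letter multiplies the defect by at most $d$, yields $h(gy_n) > 1 - d^{\,k}\varepsilon_n \ge 1 - d^{\,n}\varepsilon_n$.

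Choosing $\varepsilon_n = \frac{1}{n\,d^{\,n}}$ then forces $1 \ge h(g y_n) > 1 - \frac{1}{n}$ for every word $g$ of length at most $n$, and in particular $1 - \frac{1}{n} < h(y_n) \le 1$ as well. Setting $x_n = y_n$, both the numerator and the denominator of $\frac{h(gx_n)}{h(x_n)}$ lie in the interval $\left(1 - \frac{1}{n},\, 1\right]$ once $n \ge |g|$, so for each fixed $g$ the ratio tends to $1$ as $n \to \infty$, which is precisely the claimed approximation.

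The main obstacle is making the degree bound genuinely uniform and handling the non-lazy, non-uniform version of the walk cleanly. In the homogeneous setting at hand this is harmless: the generating set $S$ is finite, so $d = |S|$ bounds every vertex degree simultaneously, and the averaging identity requires neither laziness nor aperiodicity for the propagation estimate to run. If one instead allowed a genuinely inhomogeneous locally finite graph with unbounded degrees, the single factor $d$ would have to be replaced by the maximal degree encountered in the ball of radius $n$ about $y_n$, and one would need to keep this quantity controlled as $n$ grows; the finiteness of $S$ is exactly what removes that difficulty.
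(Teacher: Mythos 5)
Your proposal is correct and follows essentially the same route as the paper's own proof: normalize $\sup h = 1$, pick a near-maximizing point $y_n$ with defect $\varepsilon = \frac{1}{n d^n}$, and use the harmonic averaging identity to show each letter of a word multiplies the defect by at most $d$, so all words of length $\le n$ keep $h$ within $\frac{1}{n}$ of the supremum. The only cosmetic difference is that you make the one-step estimate $h(sy) \ge d\,h(y) - (d-1)$ and the treatment of the non-lazy case explicit, where the paper leaves both as remarks.
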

	
\begin{proof}
	We will give a proof for the case of a lazy simple random walk: the generating set $S$ is symmetric, contains the identify element and has $d$ elements in total. We also assume each step is a left multiplication by an arbitrary element from the set $S$ chosen with probability $\frac{1}{d}$. The more general case can be proven similarly.
	
	The statement is equivalent to the following: for each $n$ there is an element $x_n \in X$ such that $\left| \frac{h(gx_n)}{h(x_n)} -1 \right| < \frac{1}{n}$ for any word $g$ of length $\le n$. 
		
	Without loss of generality, assume $\sup h = 1$. If the function is constant, the statement is obvious. Otherwise, take a point $y_n$ such that $f(y_n)>1-\varepsilon$, where $\varepsilon = \frac{1}{n d^n}$.
		
	If $g$ is a one-letter word, by harmonicity we get $h(gy_n)>1-d\varepsilon$, where $d=2k+1$ is greater than or equal to the degree of $y_n$. This happens because $h(gy_n)>1-\varepsilon$ is an average of $d$ values not exceeding $1$. Similarly, if $g$ is a two-letter word, we get $h(gy_n)>1-d\cdot d\varepsilon=1-d^2\varepsilon$. Continuing in the same way, we obtain that for any word $g$ of length $n$ or less $1 \ge h(gy_n) > 1 - d^n \varepsilon = 1-\frac{1}{n} $, which implies that $\left| \frac{h(gy_n)}{h(y_n)} -1 \right| < \frac{1}{n}$. 
\end{proof}

Since an example of a suitable non-superharmonic function for a non-amenable group action can be constructed in a universal manner for all actions, the criterion can be reformulated as follows:

\begin{theorem}\label{north-weak}
	The action of $G$ on $X$ is amenable if and only if for all positive superharmonic functions $f$ with respect to $\mu$ there exists a sequence $(E_n)$ in $X$ such that for all $x \in G$, $\frac{Pf(E_n)}{f(E_n)} \rightarrow 1$ as $n \rightarrow \infty$.
\end{theorem}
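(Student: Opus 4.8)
The plan is to obtain both implications of Theorem~\ref{north-weak} cheaply from the strong criterion (Theorem~\ref{north-strong}) together with Lemma~\ref{one}; no new machinery should be needed, since all the analytic work has already been done. For the \emph{forward} direction I would assume the action amenable and let $f$ be any positive superharmonic function. Theorem~\ref{north-strong} then furnishes a sequence $(E_n)$ with $\frac{f(xE_n)}{f(E_n)} \to 1$ for every $x \in G$. Because $\mu$ is finitely supported, the sum defining $P$ is finite, so the limit passes through it term by term:
\[
\frac{Pf(E_n)}{f(E_n)} = \sum_x \mu(x)\,\frac{f(xE_n)}{f(E_n)} \;\longrightarrow\; \sum_x \mu(x) = 1 \qquad (n \to \infty),
\]
using that $\mu$ is a probability measure. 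This is precisely the weaker approximation required, so the forward implication is immediate.

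For the \emph{reverse} direction I would argue by contraposition, producing a single positive superharmonic function that admits no such sequence. Suppose the action is \emph{not} amenable. By Lemma~\ref{one} the quantity $\inf\{\lambda : \exists f>0,\ Pf \le \lambda f\}$ differs from $1$; but the constant function satisfies $P1 = 1$, so this infimum is at most $1$, and hence strictly below $1$. I may therefore fix some $\lambda < 1$ and a function $f > 0$ with $Pf \le \lambda f$. Since $\lambda < 1$ we have $Pf \le \lambda f \le f$, so $f$ is genuinely positive superharmonic, and moreover $\frac{Pf(E)}{f(E)} \le \lambda < 1$ for \emph{every} $E \in X$. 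Thus along any sequence $(E_n)$ the ratio $\frac{Pf(E_n)}{f(E_n)}$ stays bounded away from $1$ and cannot converge to $1$, contradicting the hypothesis applied to this $f$. Hence the action must be amenable.

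The only point that deserves emphasis — and the reason the criterion can be relaxed in exactly this way — is that the counterexample is \emph{universal}: for \emph{any} non-amenable action Lemma~\ref{one} supplies a positive function with $Pf \le \lambda f$ for a single $\lambda < 1$, and the resulting bound $Pf/f \le \lambda$ holds uniformly over all of $X$ rather than merely along a sequence. I do not expect a genuine obstacle here; the substantive content is already carried by Theorem~\ref{north-strong} and Lemma~\ref{one}, and this theorem simply records that the full pointwise approximation $f(xE_n)/f(E_n)\to 1$ may be replaced by the single averaged quantity $Pf(E_n)/f(E_n)\to 1$ without weakening the equivalence with amenability.
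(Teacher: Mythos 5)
Your proof is correct and follows essentially the same route as the paper: the forward implication is deduced from Theorem~\ref{north-strong} by passing the limit through the finite sum defining $P$, and the reverse implication is the universal counterexample supplied by Lemma~\ref{one}, namely a positive $f$ with $Pf \le \lambda f$ for some $\lambda < 1$ whenever the action is non-amenable, so that $Pf(E)/f(E) \le \lambda$ uniformly in $E$. You merely spell out details the paper's terser proof leaves implicit, such as why the infimum in Lemma~\ref{one} is at most $1$ (via the constant function) and hence admits witnesses $\lambda < 1$.
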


\begin{proof}
	If the action is amenable, then for any suitable function $f$ there is a sequence $E_n$ satisfying the stronger condition from Theorem \ref{north-strong}, which implies in particular the statement of this theorem.
	
	The opposite direction is proven using Lemma \ref{one} in a way similar to Theorem \ref{north-strong}.
\end{proof}

\section{Strong and weak approximations}
Depending on the context we can choose to use the strong (as in Theorem \ref{north-strong}) or weak (as in Theorem \ref{north-weak}) approximation. If the aim is to prove that a group is non-amenable, the strong criterion works better because the strong approximation has a better chance of catching a "bad" superharmonic function. On the other hand, to conclude that a group is amenable, it is easier to verify that all superharmonic functions have an approximation in a weak sense.

In this paper we will be primarily focused on strong approximations, with making references to weak approximations when necessary.

\begin{theorem}
	Let $f$ be a function of $\pfx$ such that any finite set $E$ satisfies $f(E)=f(E \Delta \{p\})$ (we will call such functions switch-invariant). Then $f$ satisfies the weak approximation condition: there is a sequence $E_n$ such that $\frac{Pf(E_n)}{f(E_n)} \rightarrow 1$.
\end{theorem}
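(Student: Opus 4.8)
The plan is to use switch-invariance to neutralize the only lamp the walk can toggle, and thereby reduce the claim to the already-established weak criterion (Theorem \ref{north-weak}) for the amenable base action $F \curvearrowright X$. Write the symmetric generating measure on the Lamplighter group as $\mu = \mu(e)\,\delta_e + \mu(\sigma)\,\delta_\sigma + \sum_{s} \mu(s)\,\delta_s$, where $\sigma = (\{p\},e)$ and $s$ ranges over the four $F$-generators $a^{\pm1}, b^{\pm1}$ (identified with $(\varnothing,a)^{\pm1}$, $(\varnothing,b)^{\pm1}$). Expanding the Markov operator on a configuration $E \in \pfx$ gives $Pf(E) = \mu(e)f(E) + \mu(\sigma)f(E \Delta \{p\}) + \sum_{s}\mu(s)f(sE)$. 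Since $f$ is switch-invariant, $f(E \Delta \{p\}) = f(E)$, so the $\sigma$-move contributes exactly as a holding move; the genuine transport of the configuration is carried only by the $F$-generators.

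First I would restrict $f$ to singleton configurations: set $\phi(x) := f(\{x\})$, which is positive because $f$ is. The point is that a singleton has a trivial lamp everywhere except possibly at $p$, which is precisely the switch point, so switch-invariance makes the $\sigma$-move invisible on singletons as well: $\sigma\{x\} = \{x\}\Delta\{p\}$ equals $\{x,p\}$ for $x\neq p$ and $\varnothing$ for $x=p$, and in either case $f(\sigma\{x\}) = f(\{x\}) = \phi(x)$. Feeding $E=\{x\}$ into the expansion and using $f(s\{x\}) = \phi(sx)$ yields the clean identity
\[
Pf(\{x\}) = \mu(e)\,\phi(x) + \mu(\sigma)\,\phi(x) + \sum_{s}\mu(s)\,\phi(sx) = P_X\,\phi(x),
\]
where $P_X$ is the Markov operator of the measure $\mu_X := \bigl(\mu(e)+\mu(\sigma)\bigr)\delta_e + \sum_{s}\mu(s)\,\delta_s$ on $F$. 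This $\mu_X$ is a probability measure that is symmetric (the weights on $s$ and $s^{-1}$ agree), generating (since $a,b$ generate $F$), and aperiodic (its holding mass $\mu(e)+\mu(\sigma)$ is positive), so it meets the standing assumptions of Section~\ref{north}. Reading the superharmonicity inequality $Pf \le f$ at singleton configurations then gives $P_X\phi \le \phi$ directly, so $\phi$ is a positive superharmonic function on $X$ for $\mu_X$.

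Next I would invoke the amenability of $F \curvearrowright X$ proved above. By the weak criterion (Theorem \ref{north-weak}) applied to this amenable action and the positive superharmonic function $\phi$, there is a sequence $(x_n)$ in $X$ with $P_X\phi(x_n)/\phi(x_n) \to 1$. Taking $E_n := \{x_n\}$ and dividing the displayed identity by $\phi(x_n) = f(\{x_n\})$ gives $Pf(E_n)/f(E_n) = P_X\phi(x_n)/\phi(x_n) \to 1$, which is exactly the asserted weak approximation. The entire conceptual content sits in the reduction of the previous paragraph: switch-invariance is precisely the hypothesis under which the Lamplighter walk, restricted to single-point states, becomes the base walk on $X$. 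The only points requiring care are the transfer of superharmonicity to $\phi$ (the global inequality $Pf\le f$ read solely at singletons, together with $f(\{x\}\Delta\{p\}) = f(\{x\})$) and the verification that $\mu_X$ is generating, symmetric, and aperiodic so that Theorem \ref{north-weak} genuinely applies; the rest is the bookkeeping displayed above.
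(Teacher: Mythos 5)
Your proof is correct and follows essentially the same route as the paper: restrict $f$ to singletons, use switch-invariance to turn the $\sigma$-move into a holding move, transfer superharmonicity to the base walk on $X$, and apply the weak criterion to the amenable action $F \curvearrowright X$ with $E_n = \{x_n\}$. Your explicit lumping of the $\sigma$-mass into the holding mass of $\mu_X$ is in fact slightly more careful bookkeeping than the paper's, which identifies $P\varphi$ with $Pf$ on singletons without spelling out the change of normalization.
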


\begin{proof}
	Let $\varphi(x) = f(\{x\})$ be the restriction of $f$ to the class of one-element sets. Since $f$ is superharmonic and switch-invariant, so is $\varphi_0$ with respect to the simple random walk generated by $\{a,b,a^{-1},b^{-1}\}$. The amenability of the action of $F$ on $X$ (Theorem \ref{amenable-action}) implies that there is a sequence $\{x_n\} \subset X$ such that $\frac{P\varphi(x_n)}{\varphi(x_n)} \rightarrow 1$. On the other hand, $P\varphi(x_n) = Pf(\{x_n\})$ and $\varphi(x_n)=f(\{x_n\})$ by definition and properties of $f$. This, in turn, gives an approximation in a weak sense for $f$.
\end{proof}

It is not known whether a strong approximation exists for this class. Further on in the paper, we will find such approximations for a subclass of switch-invariant functions as well as some other functions. 

\section{Min-functions are well-defined and superharmonic}\label{min-well-def}

\begin{definition}
		Let $\varphi$ be a function defined on a set $X$ and achieving its maximum at the root point $p$. Then the function $f : \mathcal{P}_f(X) \rightarrow \mathbb{R} $ defined by 
		\begin{itemize}
			\item $f(E) = \min\limits_{x \in E} \varphi(x)$ 
			\item $f (\varnothing) = \varphi (p)$
		\end{itemize}
		
		is called the min-function of $\varphi$.
\end{definition}

\begin{lemma}
	Let $\varphi$ be a superharmonic function on $X$ achieving its maximum at the root point $p$. Then its min-function $f$ is also superharmonic with respect to any measure given by $\mu (\varnothing, a)=\mu (\varnothing, b)=\mu (\varnothing, a^{-1})=\mu (\varnothing, b^{-1})>0$, $\mu (\{p\}, id)>0$.
\end{lemma}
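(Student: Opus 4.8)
The plan is to verify the defining inequality $f(E) \ge Pf(E)$ directly, for every finite set $E$, by exploiting two features of the situation: the four ``moving'' generators $a,b,a^{-1},b^{-1}$ carry a common weight in $\mu$ (call it $\alpha$) while the switch $\sigma=(\{p\},id)$ carries weight $\beta = 1-4\alpha$, and $\varphi$ attains its global maximum at the root $p$. Writing out the Markov operator for the Lamplighter action on a set $E$,
\[
Pf(E) = \alpha\bigl(f(aE)+f(bE)+f(a^{-1}E)+f(b^{-1}E)\bigr) + \beta\, f(E\,\Delta\,\{p\}),
\]
I would handle the four moving terms and the single switch term separately, and treat $E=\varnothing$ at the end.

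The crux, and the step I expect to be the main obstacle, is the \emph{switch-invariance} identity $f(E\,\Delta\,\{p\}) = f(E)$, valid for every $E$. The naive worry is that deleting $p$ from $E$ raises the minimum — the wrong direction for a superharmonicity inequality — and it is precisely the placement of the maximum of $\varphi$ at $p$ that rescues us. Concretely, since $\varphi(p)=\max_x\varphi(x)$, for $p\notin E$ one has $f(E\cup\{p\})=\min\bigl(f(E),\varphi(p)\bigr)=f(E)$; and for $p\in E$ with $E\neq\{p\}$ the value $\varphi(p)$ is never the strict minimum over $E$, so $f(E)=\min_{x\in E\setminus\{p\}}\varphi(x)=f(E\setminus\{p\})$. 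The degenerate cases $E=\{p\}$ and $E=\varnothing$ are checked against the convention $f(\varnothing)=\varphi(p)$. Hence the switch term always contributes exactly $\beta\, f(E)$ and is harmless.

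For the moving terms, fix a minimizer $x_0\in E$, so that $f(E)=\varphi(x_0)$. Since each generator $s$ acts as a bijection of $X$, we have $f(sE)=\min_{x\in E}\varphi(sx)\le\varphi(sx_0)$; summing over $s\in\{a,b,a^{-1},b^{-1}\}$ and invoking superharmonicity of $\varphi$ for the simple random walk operator $P_X$ on the Schreier graph gives
\[
\alpha\sum_{s} f(sE) \;\le\; \alpha\sum_{s}\varphi(sx_0) \;=\; 4\alpha\, P_X\varphi(x_0) \;\le\; 4\alpha\,\varphi(x_0).
\]
Combining this with the switch-invariance of the previous paragraph yields $Pf(E)\le 4\alpha\,\varphi(x_0)+\beta\,\varphi(x_0)=(4\alpha+\beta)\varphi(x_0)=\varphi(x_0)=f(E)$, which is exactly what is required for nonempty $E$.

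Finally I would dispose of the empty set: since $s\varnothing=\varnothing$ for each moving generator and $\varnothing\,\Delta\,\{p\}=\{p\}$ with $f(\{p\})=\varphi(p)=f(\varnothing)$, all five terms equal $\varphi(p)$, giving $Pf(\varnothing)=(4\alpha+\beta)\varphi(p)=f(\varnothing)$, so $f$ is in fact harmonic there. This closes the verification in all cases, and the only genuinely delicate input is the switch-invariance, whose validity hinges entirely on the maximum of $\varphi$ sitting at the root.
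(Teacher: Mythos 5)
Your proposal is correct and follows essentially the same route as the paper: decompose the Markov operator into the four moving generators plus the switch, bound the moving part by evaluating at a minimizer $x_0$ of $\varphi$ on $E$ and invoking superharmonicity of $\varphi$, and use the maximality of $\varphi$ at $p$ to see the switch term contributes exactly $f(E)$. Your write-up is in fact more careful than the paper's on the two points it leaves implicit — the case analysis behind switch-invariance (including $E=\{p\}$) and the explicit check at $E=\varnothing$.
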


\begin{proof}
	Define 
	\begin{itemize}
		\item $T_1f(E)=\frac{1}{4} (f(aE)+f(bE)+f(a^{-1}E)+f(b^{-1}E))$
		\item $T_2f(E)=f(E \Delta p)$
		\item $Tf(E)=\alpha T_1f(E)+(1-\alpha) T_2f(E)$, $\alpha \in (0,1)$.
	\end{itemize}

	Let $E$ be a non-empty finite subset of $X$ and let $x$ be the point in $E$ minimizing $\varphi$. Then by definition:
	
	$f(aE) \le \varphi(a.x)$;
	
	$f(bE) \le \varphi(b.x)$;
	
	$f(a^{-1}E) \le \varphi(a^{-1}.x)$;
	
	$f(b^{-1}E) \le \varphi(b^{-1}.x)$;
	
	$T_1f(E)=\frac{1}{4}(f(aE)+f(bE)+f(a^{-1}E)+f(b^{-1}E)) \le \frac{1}{4}(\varphi(a.x)+\varphi(b.x)+\varphi(a^{-1}.x)+ \varphi(b^{-1}.x)) =P\varphi(x) \le \varphi(x) = f(E)$
	
	$T_2f(E) = f(E)$ since by definition $\varphi(p) \ge \varphi(x)$.
	
	Hence, $Tf(E) \le f(E)$ for any non-empty $E$. For the empty set the statement is easily checked by hand.
\end{proof}

\begin{lemma}\label{not-supharm}
	Let $\varphi$ be a function on $X$ such that:
	\begin{itemize}
		\item $\varphi$ is not superharmonic;
		\item $\varphi(x) \le \varphi(p)$ for any $x \in X$.
	\end{itemize}
	
	Then the corresponding min-function $f$ is not superharmonic either.
\end{lemma}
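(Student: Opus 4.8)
The plan is to defeat superharmonicity of $f$ on a single well-chosen finite set, and the natural candidate is a singleton, since on singletons the min-function simply reproduces $\varphi$. First I would unwind the hypothesis that $\varphi$ is not superharmonic: this means there is a point $x_0 \in X$ with $P\varphi(x_0) > \varphi(x_0)$, where $P\varphi(x) = \frac{1}{4}(\varphi(a.x) + \varphi(b.x) + \varphi(a^{-1}.x) + \varphi(b^{-1}.x))$ is the simple random walk operator on $X$ appearing in the previous lemma.

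The one genuinely nontrivial observation is that such a failure point cannot be the root $p$. Indeed, by the second hypothesis $\varphi(y) \le \varphi(p)$ for every $y \in X$, so $P\varphi(p)$ is an average of four values each at most $\varphi(p)$, whence $P\varphi(p) \le \varphi(p)$. Thus $\varphi$ is automatically superharmonic at $p$, and therefore the chosen $x_0$ satisfies $x_0 \ne p$. This is exactly the step where the maximum hypothesis is used, and isolating it is the crux of the argument.

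With $x_0 \ne p$ fixed, I would evaluate the operator $T = \alpha T_1 + (1-\alpha) T_2$ from the preceding lemma on $E = \{x_0\}$. Since $g\{x_0\} = \{g.x_0\}$ for each generator, one has $T_1 f(\{x_0\}) = \frac{1}{4}\bigl(\varphi(a.x_0)+\varphi(b.x_0)+\varphi(a^{-1}.x_0)+\varphi(b^{-1}.x_0)\bigr) = P\varphi(x_0)$ exactly. For the switch term, because $x_0 \ne p$ the symmetric difference is $\{x_0\}\,\Delta\,\{p\} = \{x_0,p\}$, so $T_2 f(\{x_0\}) = \min(\varphi(x_0),\varphi(p)) = \varphi(x_0)$, again invoking $\varphi(x_0) \le \varphi(p)$. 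Combining these, and using $\alpha \in (0,1)$ together with $P\varphi(x_0) > \varphi(x_0)$, I get $Tf(\{x_0\}) = \alpha P\varphi(x_0) + (1-\alpha)\varphi(x_0) > \varphi(x_0) = f(\{x_0\})$.

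Hence $Tf(\{x_0\}) > f(\{x_0\})$ witnesses that $f$ is not superharmonic, completing the proof. The main obstacle is purely the bookkeeping around the switch: had $x_0 = p$ been possible, then $\{p\}\,\Delta\,\{p\} = \varnothing$ and $T_2 f(\{p\}) = f(\varnothing) = \varphi(p)$, which would make $f$ superharmonic at $\{p\}$ and break the computation. Once the maximum hypothesis rules this case out, the remainder is a direct substitution.
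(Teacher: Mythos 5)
Your proof is correct and takes essentially the same approach as the paper's: both test superharmonicity on the singleton of the failure point $x_0$, use the maximum hypothesis both to force $x_0 \ne p$ and to evaluate the switch term as $\min(\varphi(x_0),\varphi(p)) = \varphi(x_0)$, and then conclude from $P\varphi(x_0) > \varphi(x_0)$. The only cosmetic differences are that the paper argues by contradiction using the uniform weight-$\frac{1}{5}$ measure on the five generators, while you compute $Tf(\{x_0\}) > f(\{x_0\})$ directly for a general mixture $\alpha T_1 + (1-\alpha)T_2$ with $\alpha \in (0,1)$.
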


\begin{proof}
	Let $q$ be a point where $\varphi$ is not superharmonic, i.e. $\varphi(q)<P\varphi(q)$. Clearly $q \neq p$. Let $r_i$, $i=\overline{1,4}$, be the neighbors of $q$ (possibly multiple or coinciding with $q$). Then $4\varphi(q)<\varphi(r_1)+\varphi(r_2)+\varphi(r_3)+\varphi(r_4).$
	
	Now assume that $f$ is superharmonic. Rewriting the superharmonicity condition for the set $\{q\}$, we get \[ 5f(\{q\}) \ge f(\{r_1\})+f(\{r_2\})+f(\{r_3\})+f(\{r_4\})+f(\{q,p\}). \] In terms of $\varphi$, this is equivalent to \[ 5\varphi(q) \ge \varphi(r_1)+\varphi(r_2)+\varphi(r_3)+\varphi(r_4)+\varphi(q) \] or \[ 4\varphi(q) \ge \varphi(r_1)+\varphi(r_2)+\varphi(r_3)+\varphi(r_4), \] which contradicts the above.
	
	Therefore, $f$ is not superharmonic at the point $\{q\}$.
\end{proof}

\section{Results for min-functions and their modifications}

\begin{theorem}\label{first}
	Let $X$ be the Schreier graph of Thompson's group $F$ with $p$ being the root vertex. Let $\varphi$ be a positive superharmonic (with respect to the uniform measure) function on $X$ such that $f(q) \le f(p)$ for all points $q \in X$ and $f$ be its min-function.
	
	Then there is a sequence $E_n \in \mathcal{P}_f(X)$, $n \in \mathbb{N}$, such that for any $g \in \pfx \rtimes F$ the sequence $\frac{f(gE_n)}{f(E_n)}$ converges to 1.
\end{theorem}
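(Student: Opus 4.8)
The goal is to verify the strong approximation condition of Theorem \ref{north-strong} for the action of the Lamplighter group $\pfx \rtimes F$ on $\pfx$. By Corollary \ref{north-equiv}, applied with the finite generating set $\{a,b,\sigma\}$, it suffices to produce, for a fixed non-increasing $\beta(n)\to 0$, a sequence $E_n\in\pfx$ such that every word $g$ of length at most $n$ satisfies $\left|\frac{f(gE_n)}{f(E_n)}-1\right|<\beta(n)$. Writing such a $g$ in semidirect-product normal form $g=(F_0,h)$ with $h\in F$, we have $|h|\le n$ and, since $\sigma$ lights the single point $p$ while $a,b$ displace lit lamps by one graph-step, $F_0\subseteq B_n(p)$, the ball of radius $n$ around $p$. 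The plan is to take $E_n=\{x_n\}$ to be a single vertex at which $\varphi$ is very close to its infimum $L:=\inf_X\varphi$. The generator $\sigma$ is then automatically harmless: $\sigma E_n=\{p,x_n\}$ and, because $\varphi$ attains its maximum at $p$, $f(\sigma E_n)=\min(\varphi(p),\varphi(x_n))=\varphi(x_n)=f(E_n)$. The whole difficulty is thus to control $f((F_0,h)E_n)=f(F_0\,\Delta\,\{hx_n\})$.

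The engine of the argument is the minimum principle for superharmonic functions (if $\varphi$ is constant the claim is trivial, so assume not). Since $\varphi$ is superharmonic with respect to the uniform measure on $\{a,b,a^{-1},b^{-1}\}$, for any $x$ and any neighbour $y=sx$ we have $\varphi(y)\le 4\varphi(x)-\sum_{s'\ne s}\varphi(s'x)\le 4\varphi(x)-3L$, so that, writing $\delta(x):=\varphi(x)-L\ge 0$, one gets $\delta(y)\le 4\,\delta(x)$ and hence $\delta(y)\le 4^{\,d(x,y)}\delta(x)$ for all $x,y$. As $\varphi$ is non-constant it is strictly larger than $L$ everywhere, so every finite set carries a positive lower bound for $\delta$; consequently any vertex with $\delta$ small must lie far from $p$. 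I would therefore choose $x_n$ with $\delta(x_n)\le L\,\beta(n)\,4^{-n}$ and, simultaneously, $d(x_n,p)>2n$ (both are achievable since $\inf\delta=0$ is approached only at infinity). For $|h|\le n$ the point $hx_n$ lies in $B_n(x_n)$, whence $\delta(hx_n)\le 4^{n}\delta(x_n)\le L\beta(n)$ and, as $d(x_n,p)>2n$, $hx_n\notin F_0$. Therefore $f((F_0,h)E_n)=\min\bigl(\min_{F_0}\varphi,\ \varphi(hx_n)\bigr)$, a quantity lying in $[L,\,L+L\beta(n)]$ because $\min_{F_0}\varphi\ge L$ and $\varphi(hx_n)\le L+L\beta(n)$. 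Since $f(E_n)=\varphi(x_n)$ lies in the same interval, $\left|\frac{f(gE_n)}{f(E_n)}-1\right|\le\frac{L\beta(n)}{L}=\beta(n)$, as required.

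The step I expect to be the genuine obstacle is the case $L=0$, i.e. when $\varphi$ decays to $0$ at infinity. There the propagation bound $\delta(hx_n)\le 4^{n}\delta(x_n)$ only controls absolute, not relative, size, and the ratio $\varphi(hx_n)/\varphi(x_n)$ is no longer forced towards $1$ by the minimum principle alone. In this regime the elementary estimate must be replaced by the amenability of the action $F\curvearrowright X$: Theorem \ref{north-strong}, together with Theorem \ref{amenable-action}, supplies a sequence along which $\varphi(hx_n)/\varphi(x_n)\to 1$ for every $h$, and the remaining point is to check that such a sequence can be taken with $\varphi(x_n)\to 0=L$, so that the lamp contribution $\min_{F_0}\varphi>0$ is eventually irrelevant. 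For functions where $L=0$ is visibly forced by transience (for instance a Green's function, where every $x_n\to\infty$ already satisfies $\varphi(x_n)\to 0$) this is immediate; isolating the general mechanism that makes the approximating points realize the infimum is the part that requires care.
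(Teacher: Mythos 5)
Your argument for the case $L=\inf_X\varphi>0$ is correct (the propagation bound $\delta(y)\le 4^{\,d(x,y)}\delta(x)$, the minimum principle forcing small-$\delta$ points to lie far from $p$, and the containment $F_0\subseteq B_n(p)$ are all fine), but the case $L=0$ that you defer is not a boundary case to be patched later: it \emph{is} the theorem. The paper's own proof begins by replacing $\varphi$ with $\varphi-r$, $r=\inf\varphi$, observing that superharmonicity is preserved and that convergence of $\frac{f(gE_n)-r}{f(E_n)-r}$ to $1$ implies convergence of $\frac{f(gE_n)}{f(E_n)}$ to $1$; so the general statement reduces to the normalized case $\inf\varphi=0$, exactly the regime where your additive estimate carries no relative information. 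Your fallback for that regime does not close the gap. Applying Theorem \ref{north-strong} to the amenable action of $F$ on $X$ produces \emph{some} sequence $\{x_n\}$ with $\varphi(hx_n)/\varphi(x_n)\rightarrow 1$, but nothing forces $\varphi(x_n)\rightarrow 0$ along it: for instance, for a superharmonic function that is constant on hairs (such as the functions $\varphi_i$ built in the section on unbounded $|E_n|$), any sequence going deep along a single hair in the region where $\varphi\equiv 1$ is already a Northshield sequence, yet $\varphi(x_n)\equiv 1$ while $\inf\varphi=0$. Worse, even granting $\varphi(x_n)\rightarrow 0$, you need the quantitative coupling: for the \emph{same} index $n$, ratio control for all words of length $\le n$ \emph{and} $\varphi(x_n)$ strictly below $\min_{q\in\ball}\varphi(q)$, so that lamps lit inside $\ball$ never achieve the minimum. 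Producing a sequence with both properties simultaneously is precisely the content of the theorem, so invoking Northshield here is circular in spirit.

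What the paper uses, and what your proposal is missing, is the specific hair structure of this Schreier graph, which supplies multiplicative control where the minimum principle gives only additive control. Superharmonicity along a hair forces $m\mapsto\varphi(\hair{z}{m})$ to be concave (Proposition \ref{concave}) and hence non-decreasing (Proposition \ref{nondecr}); therefore at depth $n^2$ on a hair, a displacement by at most $n$ changes the value by at most $\frac{1}{n}\bigl(a_{n^2}-a_0\bigr)<\frac{1}{n}a_{n^2}$, giving the ratio bound with no assumption on $\inf\varphi$. Concavity also yields the linear growth bound $\varphi(\hair{z}{m})\le(3m+1)\varphi(z)$ (Proposition \ref{est}), so by attaching the hair to a skeleton vertex $q_N$ chosen with $\varphi(q_N)<\frac{r_n}{4n^2}$ (where $r_n$ is the minimum of $\varphi$ over the top $n+1$ levels, attained on the bottom level by Proposition \ref{minimum}), every point of the relevant hair segment has value below $r_n\le\min_{\ball}\varphi$, so points created by $\sigma$ and pushed around by words of length $\le n$ can never affect the min. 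That construction handles $\inf\varphi=0$ directly and is the step your proposal would need to replace.
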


Before proving the theorem we will need to introduce some additional notation and prove several intermediate lemmas.

Let $z$ be a point on the binary skeleton $X_{sk}$. Denote by $\hair{z}{m}$ the point located on the corresponding hair $m$ points away from $z$.

We can see $g$ as a word composed of letters in $S=\{a,a^{-1},b,b^{-1}, \sigma\}$ corresponding to the generators of $\pfx \rtimes F$ and their inverses:

\begin{itemize}
	\item $aE = \{a.x \ | \ x \in E \}$;
	\item $a^{-1}E = \{a^{-1}.x \ | \ x \in E \}$;
	\item $bE = \{b.x \ | \ x \in E \}$;
	\item $b^{-1}E = \{b^{-1}.x \ | \ x \in E \}$;
	\item $\sigma E = E \Delta \{p\}$.
\end{itemize}

Denote by $S^*$ the set of all finite words in $S$. If $g$ has several representations in $S^*$, we pick the shortest one. If there are several minimal-length representations, we can pick an arbitrary one. 

\begin{proposition}\label{concave}
	$\varphi(\hair{z}{m})$ is always a concave function with respect to $m$, i.e. $\linebreak \varphi(\hair{z}{m+1}) - \varphi(\hair{z}{m})$ is non-increasing.
\end{proposition}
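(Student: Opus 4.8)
The plan is to reduce the asserted concavity to a single application of the superharmonicity of $\varphi$ at an interior point of the hair, exploiting the fact that a hair sits inside $X$ as a simple one-dimensional ray. First I would record the local structure of the Schreier graph along a hair: for every skeleton vertex $z$ and every $m \ge 1$, the point $\hair{z}{m}$ is an interior point of the hair, so among its four generator-neighbours, counted with multiplicity since the walk on $X$ is generated uniformly by $a,b,a^{-1},b^{-1}$, exactly two are the adjacent hair points $\hair{z}{m-1}$ and $\hair{z}{m+1}$, while the remaining two generator-edges are loops at $\hair{z}{m}$. (Concretely, one of the generators together with its inverse fixes each interior hair point, contributing the two loops, while the other generator and its inverse shift along the hair.) Verifying this description precisely from the action of $a = g_1 g_0^{-1}$ and $b = g_1$ is the one genuinely graph-theoretic ingredient.

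Granting this, I would write out the superharmonicity inequality $P\varphi(\hair{z}{m}) \le \varphi(\hair{z}{m})$ at the interior point $\hair{z}{m}$, for each fixed $z$ and each $m \ge 1$. Since two of the four equally weighted moves are loops and the other two land on the neighbours along the hair, the inequality collapses to the purely one-dimensional statement
\[
\frac14\left(\varphi(\hair{z}{m-1}) + \varphi(\hair{z}{m+1}) + 2\varphi(\hair{z}{m})\right) \le \varphi(\hair{z}{m}),
\]
which, after clearing the denominator and cancelling $2\varphi(\hair{z}{m})$, reads
\[
\varphi(\hair{z}{m-1}) + \varphi(\hair{z}{m+1}) \le 2\,\varphi(\hair{z}{m}).
\]
I would then rewrite this in increment form as $\varphi(\hair{z}{m}) - \varphi(\hair{z}{m-1}) \ge \varphi(\hair{z}{m+1}) - \varphi(\hair{z}{m})$; setting $d_m := \varphi(\hair{z}{m+1}) - \varphi(\hair{z}{m})$, this is precisely $d_{m-1} \ge d_m$ for every $m \ge 1$, i.e. the increments form a non-increasing sequence, which is the claimed concavity.

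It is worth noting that the chain $d_0 \ge d_1 \ge d_2 \ge \cdots$ invokes the superharmonicity inequality only at the genuinely interior hair points $\hair{z}{m}$ with $m \ge 1$; the skeleton vertex $z = \hair{z}{0}$, whose neighbourhood in the binary tree is more complicated, is never tested, so the different local geometry at $z$ causes no difficulty. Because the hairs are infinite rays there is also no outer endpoint to treat separately, and the argument applies uniformly in $m$. The main obstacle is therefore entirely the first step, namely establishing that along a hair the walk reduces to a nearest-neighbour walk on $\mathbb{Z}_{\ge 0}$ with holding probability $\tfrac12$ at interior points; once that local picture is in hand, the remainder is the elementary equivalence between discrete superharmonicity of a function of a single integer variable and concavity of that function.
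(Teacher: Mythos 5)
Your proposal is correct and follows essentially the same route as the paper's own proof: both apply superharmonicity of $\varphi$ at the interior hair points $\hair{z}{m}$, $m \ge 1$, where the walk moves to $\hair{z}{m\pm 1}$ with probability $\tfrac14$ each and stays put with probability $\tfrac12$, and then rearrange $2\varphi(\hair{z}{m}) \ge \varphi(\hair{z}{m-1}) + \varphi(\hair{z}{m+1})$ into the non-increasing increment form. Your write-up is in fact slightly more explicit than the paper's in justifying the two loops at interior hair points and in noting that the skeleton vertex $z$ itself is never tested, but the mathematical content is identical.
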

\begin{proof}
	Apply the superharmonicity of $\varphi$ to $\hair{z}{m}$, where $m \ge 1$: 
	\[ \varphi(\hair{z}{m}) \ge \frac{1}{4} (\varphi(\hair{z}{m+1})+\varphi(\hair{z}{m-1})+2\varphi(\hair{z}{m})).  \] Indeed, from $\hair{z}{m}$ by the distribution law we can get to $\hair{z}{m-1}$, $\hair{z}{m+1}$ and $\hair{z}{m}$ itself with probabilities $\frac{1}{4}$, $\frac{1}{4}$ and $\frac{1}{2}$ respectively. Multiplying the expression by 4 and simplifying gives \[ 2\varphi(\hair{z}{m}) \ge \varphi(\hair{z}{m-1}) + \varphi(\hair{z}{m+1}), \] which is equivalent to \[ \varphi(\hair{z}{m}) - \varphi(\hair{z}{m-1}) \ge \varphi(\hair{z}{m+1}) - \varphi(\hair{z}{m}). \]
\end{proof}

\begin{proposition}\label{nondecr}
	$\varphi$ is non-decreasing on hairs.
\end{proposition}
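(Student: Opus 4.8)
The plan is to derive the monotonicity directly from the concavity obtained in Proposition \ref{concave}, using only the fact that $\varphi$ is positive (and hence bounded below by $0$) along each hair. Fix a skeleton vertex $z$ and abbreviate the successive differences $d_m = \varphi(\hair{z}{m+1}) - \varphi(\hair{z}{m})$ for $m \ge 0$. The statement that $\varphi$ is non-decreasing on hairs is precisely the assertion that $d_m \ge 0$ for every $m$, where increasing $m$ means moving outward along the hair away from the skeleton.

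First I would restate Proposition \ref{concave} in this notation. The inequality $\varphi(\hair{z}{m}) - \varphi(\hair{z}{m-1}) \ge \varphi(\hair{z}{m+1}) - \varphi(\hair{z}{m})$, valid for every interior hair index $m \ge 1$, says exactly that $d_{m-1} \ge d_m$. Hence $(d_m)_{m \ge 0}$ is a non-increasing sequence of reals, and this is the only input from concavity that the argument needs.

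The core of the proof is then a standard observation that a concave sequence which is bounded below cannot eventually decrease, which I would carry out by contradiction. Suppose $d_{m_0} < 0$ for some index $m_0$. Since the $d_m$ are non-increasing, $d_m \le d_{m_0} < 0$ for all $m \ge m_0$, and summing telescopically gives
\[
\varphi(\hair{z}{m}) = \varphi(\hair{z}{m_0}) + \sum_{k=m_0}^{m-1} d_k \le \varphi(\hair{z}{m_0}) + (m - m_0)\, d_{m_0},
\]
so $\varphi(\hair{z}{m}) \to -\infty$ as $m \to \infty$. This contradicts the positivity $\varphi > 0$. Therefore $d_m \ge 0$ for every $m$, which is the claim.

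I do not anticipate a genuine obstacle here, since essentially all the work is already packaged in Proposition \ref{concave}. The only points requiring minor care are fixing the orientation of the word \emph{non-decreasing} (values grow as one moves outward, i.e.\ as $m$ increases) and noting that Proposition \ref{concave} supplies the concavity relation for all interior hair indices $m \ge 1$, which is exactly what guarantees that the full difference sequence $(d_m)_{m \ge 0}$ is non-increasing and lets the telescoping argument run unimpeded.
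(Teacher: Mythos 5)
Your proof is correct and follows essentially the same route as the paper: both arguments assume a negative difference somewhere on a hair, use the concavity from Proposition \ref{concave} to propagate that negative slope (the paper states the linear bound $\varphi(\hair{z}{m+k}) \le \varphi(\hair{z}{m}) - k(\varphi(\hair{z}{m}) - \varphi(\hair{z}{m+1}))$ directly, you obtain it by telescoping the non-increasing differences $d_m$), and conclude that $\varphi$ would eventually become negative, contradicting positivity.
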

\begin{proof}
	Let $q$ be a point on the skeleton. Assume that there is $m \ge 0$ such that $\varphi(\hair{z}{m}) > \varphi(\hair{z}{m+1})$. By Lemma \ref{concave} $\varphi(\hair{z}{m})$ is concave, therefore \[\varphi(\hair{z}{m}) - \varphi(\hair{z}{m+k}) \ge k\left( \varphi(\hair{z}{m}) - \varphi(\hair{z}{m+1})\right) \] and \[\varphi(\hair{z}{m+k}) \le \varphi(\hair{z}{m}) - k\left( \varphi(\hair{z}{m}) - \varphi(\hair{z}{m+1})\right). \] This implies that for a sufficiently large $k$ the value $\varphi(\hair{z}{m+k})$ is negative, which contradicts the positivity assumption.
\end{proof}

\begin{proposition}\label{est}
	For all $m \ge 0$, $\varphi(z^{(+m)}) \le (3m+1)\varphi(z) $.
\end{proposition}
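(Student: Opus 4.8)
The plan is to combine the concavity established in Proposition \ref{concave} with a single application of superharmonicity at the base point $z$. First I would abbreviate $a_m = \varphi(\hair{z}{m})$, so that $a_0 = \varphi(z)$, and note that the desired inequality $a_m \le (3m+1)a_0$ holds trivially for $m=0$, where it reads $a_0 \le a_0$.

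The key step is to bound the first hair value $a_1 = \varphi(\hair{z}{1})$ in terms of $a_0$. Here I would write out the superharmonicity inequality $\varphi(z) \ge P\varphi(z)$ at the skeleton vertex $z$. Since the simple random walk reaches $\hair{z}{1}$ from $z$ with probability $\tfrac14$, the value $P\varphi(z)$ is $\tfrac14 \varphi(\hair{z}{1})$ plus the three remaining neighbor contributions, each weighted by $\tfrac14$. As $\varphi$ is positive, those three contributions are non-negative, so $\varphi(z) \ge \tfrac14 \varphi(\hair{z}{1})$, that is, $a_1 \le 4a_0$. Equivalently, the first increment satisfies $a_1 - a_0 \le 3a_0$.

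Finally, concavity finishes the estimate. By Proposition \ref{concave} the increments $a_{m+1}-a_m$ are non-increasing, so telescoping and bounding every increment by the first one gives $a_m - a_0 = \sum_{k=0}^{m-1}(a_{k+1}-a_k) \le m(a_1 - a_0) \le 3m a_0$. Rearranging yields $\varphi(\hair{z}{m}) = a_m \le (3m+1)a_0 = (3m+1)\varphi(z)$, which is the claim. Note that this argument uses only concavity and positivity; it does not require the monotonicity of Proposition \ref{nondecr}.

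The only point demanding care is the local geometry of the walk at $z$, namely confirming that exactly one of the four equiprobable steps out of $z$ lands on $\hair{z}{1}$. This is immediate for an ordinary skeleton vertex, whose neighbors are its tree-parent, its two tree-children, and its single hair-start. I would verify separately that the conclusion still holds at the root $p$, which carries two hairs: its four neighbors are then its two tree-children together with the two hair-starts, so each hair-start is still reached with probability $\tfrac14$ and the inequality $\varphi(\hair{p}{1}) \le 4\varphi(p)$ goes through unchanged. Everything else is routine bookkeeping.
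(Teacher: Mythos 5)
Your proof is correct and follows essentially the same route as the paper: superharmonicity plus positivity at $z$ gives $\varphi(\hair{z}{1}) \le 4\varphi(z)$, and then the concavity of Proposition \ref{concave} (which you implement by explicit telescoping, the paper by citing Jensen's inequality) yields $\varphi(\hair{z}{m}) \le m\varphi(\hair{z}{1}) - (m-1)\varphi(z) \le (3m+1)\varphi(z)$. Your extra check at the root $p$ with its two hairs is a harmless refinement of the same argument.
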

\begin{proof}
	By superharmonicity, $z$ has four neighbors $z_1$, $z_2$, $z_3$ and $z^{(+1)}$, so $\varphi(z) = \frac{1}{4}(\varphi(z_1)+\varphi(z_2)+\varphi(z_3)+\varphi(z^{(+1)})) \ge \frac{1}{4}\varphi(\hair{z}{1})$ and $\varphi(\hair{z}{1}) \le 4\varphi(z)$. The rest follows from Proposition \ref{concave} and Jensen's inequality: \[ \varphi(\hair{z}{m}) \le m\varphi(\hair{z}{1})-(m-1)\varphi(z) \le 4m\varphi(z)-(m-1)\varphi(z)=(3m+1)\varphi(z). \]
\end{proof}

\begin{proposition}\label{minimum}
	Let $X_n$ be the subtree of the binary skeleton consisting of all points belonging to the upper $n+1$ levels of the tree without hairs (i.e., it contains $2^{n+1}-1$ points). Denote by $r_n$ the minimum of $\varphi$ on $X_n$. Then there is a point $q_n$ on the $(n+1)$-st level such that $\varphi(q_n)=r_n$.
\end{proposition}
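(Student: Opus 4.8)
The plan is to prove this by a descent argument of minimum-principle type: I will show that whenever $\varphi$ attains its minimum value $r_n$ over $X_n$ at a vertex that is \emph{not} on the bottom level (so that both of its tree-children still lie in $X_n$), the value $r_n$ must also be attained at both of those children, and then iterate this until I land on the bottom level. Since $X_n$ is finite (it has $2^{n+1}-1$ points), $r_n$ is certainly attained at some vertex $z_0 \in X_n$, which gives the argument a starting point.

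The first thing I would pin down is the local structure of the Schreier graph at a skeleton vertex, because it dictates exactly which terms appear when I expand the superharmonicity inequality. Every non-root skeleton vertex $z$ has precisely four neighbours: its tree-parent, its two tree-children, and the first hair point $\hair{z}{1}$; the root $p$ instead has its two children together with its two hairs. This is the key input that makes the $4$-term expansion of superharmonicity completely explicit.

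The heart of the matter is the propagation claim. Suppose $z \in X_n$ is not on the bottom level and $\varphi(z) = r_n$. Writing out superharmonicity at $z$ and multiplying by $4$ yields $4\varphi(z) \ge \varphi(z_p) + \varphi(z_{c_1}) + \varphi(z_{c_2}) + \varphi(\hair{z}{1})$, where the parent term $\varphi(z_p)$ is replaced by a second hair value when $z = p$. Now I bound each right-hand term from below by $r_n$: the two children $z_{c_1}, z_{c_2}$ and the parent $z_p$ all lie in $X_n$, hence carry $\varphi$-values at least $r_n$; and the hair term satisfies $\varphi(\hair{z}{1}) \ge \varphi(z) = r_n$ by Proposition \ref{nondecr}. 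Since the left-hand side equals $4r_n$, the whole inequality is forced to be an equality term by term, so in particular $\varphi(z_{c_1}) = \varphi(z_{c_2}) = r_n$, and both children realize the minimum.

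To finish, I would descend: starting from $z_0$ and repeatedly passing to a child whenever the current vertex is not yet on the bottom level, each step increases the level by one, so after finitely many steps I reach a vertex $q_n$ on the $(n+1)$-st level with $\varphi(q_n) = r_n$. The step that needs care — and the place a naive argument would slip — is the lower bound on the hair neighbour: an unrefined minimum principle would only push the minimum to the topological boundary of $X_n$, which includes the hairs, and could in principle let it escape outward along a hair rather than down the tree. It is exactly Proposition \ref{nondecr} (monotonicity of $\varphi$ along hairs) that rules this out and pins the descent to the tree-children. The root case, where the parent slot is occupied by a second hair, is handled identically, again using monotonicity on hairs.
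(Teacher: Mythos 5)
Your proof is correct and is essentially the paper's own argument: the paper likewise starts from a minimizer above the bottom level, uses superharmonicity together with Proposition \ref{nondecr} (to control the hair neighbour) to force all neighbours — in particular the children — to attain the value $r_n$, and then descends level by level to the $(n+1)$-st level. Your version merely makes explicit the term-by-term equality in the $4$-term expansion and the root case, which the paper leaves implicit.
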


\begin{proof}
	
	Assume that $r_n$ is achieved at some point $z$ above the $(n+1)$-st level. Then, by superharmonicity and Proposition \ref{nondecr}, all neighbors $z_0$ of $z$ satisfy $\varphi(z_0) =r_n$. Now we can take the neighbor of $z$ which is one level down and apply the same argument to it. Repeating the same procedure until we reach the $(n+1)$-st level, we obtain the desired conclusion.
	
\end{proof}

\begin{proof} \textit{(of Theorem \ref{first})}
	
	The statement is obvious if $\varphi \equiv C$ (and hence $f \equiv C$) for some $C \in \mathbb{R}$, so further on we assume that $\varphi$ is non-constant.

	Since $\varphi$ is positive and $r_n$ is non-increasing, there exists a number $r \ge 0$ such that $r_n \downarrow r$, and, by Lemma \ref{nondecr}, $r$ is the infimum of $\varphi$ over $X$. Replacing $\varphi (x)$ with $\varphi(x) - r$ preserves superharmonicity and makes the statement to prove even stronger: if $\frac{f(gE_n)-r}{f(E_n)-r}$ converges to 1, then so does $\frac{f(gE_n)}{f(E_n)}$. Since by assumption $\varphi$ is non-constant, it remains strictly positive everywhere together with $f$ because $\varphi$ cannot have minimums. So it can be assumed without loss of generality that $r_n \downarrow 0$.
	
	The idea of the proof is to construct a sequence of one-element sets $E_n=\{y_n\}$ such that any word $g$ of length not exceeding $n$ would satisfy $\left|\frac{f(gE_n)}{f(E_n)}-1\right| < \frac{1}{n}$. By Proposition \ref{north-equiv}, the existence of such a sequence is equivalent to the approximation in a strong sense. 
	
	Let $n \ge 4$ be a positive integer. Since by our assumption $r_n \downarrow 0$, there is a number $N = N(n) \in \mathbb{N}$ such that $r_N = \varphi(q_N) < \frac{r_n}{4n^2}$. Denote by $a_m=\varphi(\hair{(q_N)}{m})$, $m \ge 0$, the sequence of values of $\varphi$ on the hair attached to $q_n$. Put $y_n=\hair{q_N}{n^2}$ and $E_n = \{y_n\}$. By definition, $f(E_n)=\varphi(y_n)=a_{n^2}$.
	
	If the word representing $g$ contains only moves by $a$, $b$ and their inverses, then $gE_n=\{a_{n^2+i}\}$, where $-n \le i \le n$. To check the condition $\left|\frac{f(gE_n)}{f(E_n)}-1\right| < \frac{1}{n}$ for this case, it is sufficient to verify that $\left|\frac{a_{n^2-n}}{a_{n^2}}-1\right|=\frac{a_{n^2}-a_{n^2-n}}{a_{n^2}} < \frac{1}{n}$ and $\left|\frac{a_{n^2+n}}{a_{n^2}}-1\right|=\frac{a_{n^2+n}-a_{n^2}}{a_{n^2}} < \frac{1}{n}$. By concavity, $a_{n^2+n}-a_{n^2} \le a_{n^2}-{a_{n^2-n}}$, so it is enough to verify the former inequality. Using concavity again, it can be seen that $a_{n^2}-{a_{n^2-n}} \le \frac{1}{n}(a_{n^2}-{a_0}) < \frac{a_{n^2}}{n}$, hence, $\frac{a_{n^2}-a_{n^2-n}}{a_{n^2}} < \frac{1}{n}$.
	
	If $g$ contains $\sigma$ in its representation at least once, it means that in the process of applying the word's instruction to $E_n$ we might add some new points and, possibly, move them along tree edges. This can only affect the value of $f(gE_n)$ if for some subword $g_1 \in \{a,b,a^{-1}, b^{-1}\}^*$ with $|g_1| \le |g| \le n$ we have that $\varphi(g_1.p)<a_{n^2+n}$. But by Propositions \ref{nondecr} and \ref{minimum} any value we can achieve this way is greater than or equal to $r_n$. Applying Lemma \ref{est}, we get \[ a_{n^2+n} = \varphi (\hair{q_N}{n^2+n}) \le (3n^2+3n+1)\varphi (q_N) \le (3n^2+3n+1)\frac{r_n}{4n^2}< r_n, \] which completes the proof.
	
\end{proof}

Now we show that the approximation also exists for finite sums of min-functions, however, in this case one-element subsets might not be sufficient. 

\begin{lemma}\label{mins}
	Let $a_1, \ldots, a_m, b_1, \ldots, b_m, \alpha, \beta$ be positive real numbers such that $\frac{a_i}{b_i} \in (\alpha, \beta)$ for all $\oneik$. Then $\frac{\min(a_1, \ldots, a_n)}{\min(b_1, \ldots, b_n)} \in (\alpha, \beta)$.
\end{lemma}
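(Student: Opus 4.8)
The plan is to unfold the hypothesis $\frac{a_i}{b_i} \in (\alpha,\beta)$ into the pair of two-sided inequalities $\alpha b_i < a_i < \beta b_i$, valid for every index $i$; since all the $b_i$ are positive, multiplying through the ratio bound is legitimate and loses no information. Writing $A = \min_i a_i$ and $B = \min_i b_i$, the goal becomes the single statement $\alpha B < A < \beta B$, after which dividing through by $B > 0$ immediately gives $\frac{A}{B} \in (\alpha,\beta)$, which is exactly the conclusion.

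The one point that deserves attention is that $A$ and $B$ need not be attained at the same index, so I would establish the upper and lower bounds separately, choosing a convenient witnessing index for each. For the upper bound, let $l$ be an index realizing $b_l = B$. Then $A \le a_l < \beta b_l = \beta B$, where the first inequality holds because $A$ is the minimum of the $a_i$ and the second is the right-hand half of the unfolded inequality evaluated at $i = l$. For the lower bound, let $j$ be an index realizing $a_j = A$. Then $A = a_j > \alpha b_j \ge \alpha B$, where the first inequality is the left-hand half evaluated at $i = j$ and the second uses $b_j \ge \min_i b_i = B$.

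Combining the two estimates yields $\alpha B < A < \beta B$, and dividing by the positive quantity $B$ gives $\frac{\min(a_1,\ldots,a_m)}{\min(b_1,\ldots,b_m)} \in (\alpha,\beta)$, as required. I do not expect a genuine obstacle: the entire content is the \emph{asymmetric} selection of the index in the two estimates (minimizing $b$ for the upper bound, minimizing $a$ for the lower bound) together with the positivity of the $b_i$ that permits clearing denominators. The same reasoning goes through unchanged with non-strict inequalities, so an analogous statement for the closed interval $[\alpha,\beta]$ would follow by the identical argument should it be needed elsewhere.
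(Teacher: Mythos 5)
Your proof is correct and is essentially the same argument as the paper's: both rest on the asymmetric choice of witnessing indices, bounding the ratio above via the index minimizing $b$ and below via the index minimizing $a$. The only cosmetic difference is that you clear denominators and work with $\alpha B < A < \beta B$, while the paper phrases the same two inequalities directly as $\frac{a_i}{b_j} \in \left[\frac{a_i}{b_i}, \frac{a_j}{b_j}\right] \subset (\alpha,\beta)$.
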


\begin{proof}
	Put $a_i=\min(a_1, \ldots, a_n)$ and $b_j=\min(b_1, \ldots, b_n)$. Then:
	
	$\frac{\min(a_1, \ldots, a_n)}{\min(b_1, \ldots, b_n)}=\frac{a_i}{b_j} \in [\frac{a_i}{b_i}, \frac{a_j}{b_j}] \subset (\alpha, \beta)$.
\end{proof}

\begin{theorem}\label{second}
	Let $f_i$, $i=\overline{1,k}$ be a set of superharmonic functions on $\mathcal{P}_f(X)$ obtained as in Theorem \ref{first}. Put $f=\sum\limits_{i=1}^k \lambda_i f_i$, where $\lambda_i$ are positive real numbers. Then $f$ retains the same property: there is a sequence $E_n \in \mathcal{P}_f(X)$, $n \in \mathbb{N}$, such that for any word $g \in \pfxf$ the sequence $\frac{f(gE_n)}{f(E_n)}$ converges to 1.
\end{theorem}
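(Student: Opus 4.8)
The plan is to reduce the statement for the sum $f=\sum_{i=1}^k \lambda_i f_i$ to the single-function case of Theorem \ref{first} applied to each summand, and then to reassemble the pieces using two elementary ``ratio stays in an interval'' facts. The outer reassembly rests on the observation that if $c_i/d_i\in(\alpha,\beta)$ for positive numbers $d_i$, then the weighted mediant $\big(\sum_i \lambda_i c_i\big)/\big(\sum_i \lambda_i d_i\big)$ again lies in $(\alpha,\beta)$; applied with $c_i=f_i(gE_n)$, $d_i=f_i(E_n)$, $\alpha=1-\tfrac1n$ and $\beta=1+\tfrac1n$, this shows it suffices to produce a \emph{single} sequence $E_n$ for which $f_i(gE_n)/f_i(E_n)\to 1$ uniformly over words $g$ of length $\le n$, \emph{simultaneously} for every $i$. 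This is exactly where one-element sets are no longer enough: I would take $E_n=\{y_n^{(1)},\dots,y_n^{(k)}\}$, the union of the singletons $\{y_n^{(i)}\}$ that Theorem \ref{first} produces for the individual generating functions $\varphi_i$, each $y_n^{(i)}=\hair{q_{N,i}}{n^2}$ sitting at depth $n^2$ on a hair rooted at a vertex $q_{N,i}$ where $\varphi_i$ is extremely small.

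The point that makes one $E_n$ serve all the $f_i$ at once is that Propositions \ref{concave} and \ref{nondecr} are statements about \emph{every} hair and hold for each superharmonic $\varphi_i$ regardless of which hair we inspect. Thus for a fixed $i$ and \emph{any} index $j$, the sequence $a^{(i)}_m=\varphi_i(\hair{q_{N,j}}{m})$ is positive, non-decreasing and concave, so the concavity estimate from the proof of Theorem \ref{first} (bounding the increment over the last $n$ steps by the average increment over all $n^2$ steps) applies verbatim and yields $\big|\varphi_i(\bar g\,y_n^{(j)})/\varphi_i(y_n^{(j)})-1\big|<\tfrac1n$ for the $F$-part $\bar g$ of any word $g$ with $|g|\le n$; here $\bar g$ shifts $y_n^{(j)}$ by at most $n$ steps and so keeps it on hair $j$ at depth $\ge n^2-n>0$. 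Feeding these bounds into Lemma \ref{mins} (with $a_j=\varphi_i(\bar g\,y_n^{(j)})$ and $b_j=\varphi_i(y_n^{(j)})$) gives $f_i(gE_n)/f_i(E_n)\in(1-\tfrac1n,1+\tfrac1n)$, provided the minima defining $f_i(gE_n)$ and $f_i(E_n)$ are attained on the moved hair points and not on points introduced by the switches.

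Controlling the switch contribution is where I would reuse the normalization and estimate of Theorem \ref{first}, now read one function at a time. Replacing each $\varphi_i$ by $\varphi_i-\inf\varphi_i$ (which replaces $f$ by $f-C$ for the constant $C=\sum_i\lambda_i\inf\varphi_i\ge 0$, absorbing also any constant summands) lets me assume $r_n^{(i)}\downarrow 0$ and pick $q_{N,i}$ with $\varphi_i(q_{N,i})<r_n^{(i)}/4n^2$; then Proposition \ref{est} gives $\varphi_i(\bar g\,y_n^{(i)})\le a^{(i)}_{n^2+n}<r_n^{(i)}$, whereas by Propositions \ref{nondecr} and \ref{minimum} every point created by a $\sigma$ in $g$ has $\varphi_i$-value at least $r_n^{(i)}$. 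Hence the switch points never enter the minimum for $f_i$, and the reduction of the previous paragraph is legitimate. Finally, since $f=(f-C)+C$ with $C\ge 0$ and $f-C>0$, and since adding the same nonnegative constant to numerator and denominator only pushes a ratio closer to $1$, the convergence $(f-C)(gE_n)/(f-C)(E_n)\to 1$ upgrades to $f(gE_n)/f(E_n)\to 1$; by Corollary \ref{north-equiv} this is the desired strong approximation.

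The main obstacle, and the one I would be most careful about, is precisely this interplay between the $k$ different hairs: the point $y_n^{(i)}$ is tailored to make $\varphi_i$ tiny, but nothing is assumed about the size of $\varphi_i$ on the hairs belonging to the other indices $j\ne i$. The argument survives only because (i) the hair-concavity and hair-monotonicity of Propositions \ref{concave} and \ref{nondecr} are genuinely uniform across hairs, so the good ratio bound holds for $\varphi_i$ at every $y_n^{(j)}$ at once, and (ii) the floor $r_n^{(i)}$ beats the home-hair value $a^{(i)}_{n^2+n}$, so the switches stay invisible to the minimum no matter how the other hairs behave. Checking that these two facts hold simultaneously for all $i$ with $E_n$ held fixed is the crux; once it is in place, Lemma \ref{mins} and the weighted-mediant inequality finish the proof mechanically.
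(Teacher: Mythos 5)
Your proposal is correct and takes essentially the same route as the paper's own proof: the same set $E_n$ of $k$ points at depth $n^2$ on hairs rooted where each $\varphi_i$ is tiny, the same uniform-across-hairs use of Propositions \ref{concave}, \ref{nondecr}, \ref{est} together with Lemma \ref{mins}, the same floor argument (via Proposition \ref{minimum}) making switch points invisible to each minimum, and summation to reassemble. The only cosmetic differences are that the paper normalizes $\lambda_i=1$ by rescaling the $\varphi_i$ and uses a single common floor $\varepsilon=\min_i f_i(\mathcal{B}_n)$ where you use per-index floors $r_n^{(i)}$; both choices work.
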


\begin{proof}
	
	Without loss of generality it can be assumed that $\lambda_i=1$ since the conditions of Theorem \ref{first} are invariant under scaling by a positive number.
	
	Let $\varphi_i : X \rightarrow \mathbb{R}$ be the functions producing $f_i$. As in the previous proof, we can assume $\inf \varphi_i = 0$ for all $i$.
	
	Denote by $\ball$ the set of all points in $X$ at a distance at most $n$ from $p$ and put $\varepsilon= \min\limits_{\oneik} f_i(\ball) = \min\limits_{\oneik, \ x \in \ball} \varphi_i(x)$.
	
	Now for each $\oneik$ let $x_i$ be an arbitrary point on $X_{sk}$ satisfying $\varphi_i(x_i)<\frac{\varepsilon}{4n^2}$. We claim the set $E_n = \{x_1^{(+n^2)}, \ldots, x_k^{(+n^2)}\}$ satisfies $\left|\frac{f(gE_n)}{f(E_n)}-1\right| < \frac{1}{n}$ for words $g$ of length $\le n$.
	
	As in the previous proof, assume first that the instruction given by $g$ contains only moves by $a$, $b$ and their inverses. Then $gE_n = \{x_1^{(+n^2+s_1)}, \ldots, x_k^{(+n^2+s_k)}\}$, where $-n \le s_i \le n$ (in fact, there are only two possible values of $s_i$ depending on whether $x_i$ is a left or right child on the binary tree). For convenience, denote $y_{i}=x_i^{(+n^2)}$ and $z_{i}=x_i^{(+n^2+s_i)}$.

	By definition, $f_i(E_n) = \min\limits_{1 \le j \le n} \varphi_i(y_j)$ and \[ f_i(gE_n)=\min\limits_{1 \le j \le n} \varphi_i(z_j). \] Using the concavity analogously to the proof of Theorem \ref{first}, we have that
	
	\[ \varphi_i(z_j) \in\left( (1-\frac{1}{n})\varphi_i(y_j), (1+\frac{1}{n})\varphi_i(y_j)  \right). \]  
	
	By Lemma \ref{mins} we conclude that \[ f_i(gE_n) \in \left( (1-\frac{1}{n})f_i(E_n), (1+\frac{1}{n})f_i(E_n)  \right). \] Taking the sum over all $i$, we get \[ f(gE_n) \in \left( (1-\frac{1}{n})f(E_n), (1+\frac{1}{n})f(E_n)  \right), \] which is equivalent to $\left|\frac{f(gE_n)}{f(E_n)}-1\right| < \frac{1}{n}$.
	
	Again, if $g$ contains at least one switch, it means that in the process of applying the word's instruction to $E_n$ we might add some new points and, possibly, move them along tree edges. In this case, $gE_n= F_1 \cup F_2 $, where 
	$ F_1 = \{x_1^{(+n^2+s_1)}, \ldots, x_k^{(+n^2+s_k)}\}$ as defined previously and $F_2 \subset \mathcal{B}_n$.
	
	For all $n \ge 4$ we can estimate that
	\begin{multline*}
		f_i(F_1) = \min\limits_{1 \le j \le n} \varphi_i (x_j^{(+n^2+s_j)}) \le \min\limits_{1 \le j \le n} (3n^2+3n+1)\varphi_i(x_j) \le \\ \le (3n^2+3n+1)\varphi_i(x_i) \le \varepsilon \le f_i(\ball) \le f_i(F_2),  
	\end{multline*}
	
	where the last inequality follows from the definition of $f_i$ as the minimum over a set. Finally, we conclude that \[f(gE_n) = \sum\limits_{i=1}^n f_i(gE_n) = \sum\limits_{i=1}^n \min(f_i(F_1), f_i(F_2)) = \sum\limits_{i=1}^n f_i(F_1) = f(F_1).\] From the proof of the first case we know that $\left|\frac{f(F_1)}{f(E_n)}-1\right| < \frac{1}{n}$, which completes the proof.
	
\end{proof}

Another generalization can be made by using the fact that the Markov operator preserves superharmonicity.

\begin{theorem} \label{third}
	Let  $f : \mathcal{P}_f(X) \rightarrow \mathbb{R} $ be a superharmonic function with the strong approximation property: there is a sequence $E_n \in \mathcal{P}_f(X)$, $n \in \mathbb{N}$, such that for any word $g \in \pfxf $ the sequence $\frac{f(gE_n)}{f(E_n)}$ converges to 1. Then $Pf$, where $P$ is the corresponding Markov operator, satisfies the same property for the same sequence $E_n$.
\end{theorem}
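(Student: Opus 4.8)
The plan is to exploit the finite support of $\mu$ to reduce everything to finitely many applications of the hypothesis. Write $S$ for the support of $\mu$, which is finite by the standing assumption. Unwinding the definition of the Markov operator on $\pfx$ together with the action of $\pfxf$, for any word $g$ and any set $E$ we have
\[
Pf(gE) = \sum_{h \in S} \mu(h)\, f(h \cdot gE) = \sum_{h \in S} \mu(h)\, f((hg)E),
\]
and in particular $Pf(E) = \sum_{h \in S} \mu(h)\, f(hE)$. Since $f$ is positive and $\mu$ is a probability measure, for all large $n$ both $Pf(gE_n)$ and $Pf(E_n)$ are strictly positive, so the ratio $\frac{Pf(gE_n)}{Pf(E_n)}$ is well defined.

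Next I would divide numerator and denominator by $f(E_n)$ to obtain
\[
\frac{Pf(gE_n)}{Pf(E_n)} = \frac{\sum_{h \in S} \mu(h)\, \frac{f((hg)E_n)}{f(E_n)}}{\sum_{h \in S} \mu(h)\, \frac{f(hE_n)}{f(E_n)}}.
\]
For each fixed $h \in S$, both $hg$ and $h$ are fixed elements of $\pfxf$, so the approximation hypothesis for $f$ yields $\frac{f((hg)E_n)}{f(E_n)} \to 1$ and $\frac{f(hE_n)}{f(E_n)} \to 1$ as $n \to \infty$. Because $S$ is finite, the numerator and denominator are each finite weighted sums of these ratios, hence both converge to $\sum_{h \in S} \mu(h) = 1$. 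The ratio of two quantities each tending to the positive limit $1$ therefore tends to $1$, which is exactly the claim for the sequence $\{E_n\}$.

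The argument is short because almost all the work is absorbed into the hypothesis; the only place the standing assumption enters is the finiteness of $S$, which lets me pass the limit through the sum term by term with no uniform estimate. The main point requiring care is purely bookkeeping: one must observe that $h \cdot (gE_n) = (hg)E_n$ follows from the definition of the action of the Lamplighter group, and that $hg$ ranges over only finitely many fixed words as $h$ ranges over $S$, so each summand is controlled by the hypothesis applied to a single word. In particular no new approximating sequence is needed, since the very sequence $\{E_n\}$ that approximates $f$ also approximates $Pf$.
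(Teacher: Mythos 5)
Your proof is correct and takes essentially the same approach as the paper's: expand $Pf(E)$ as a finite weighted sum of translates $f(hE)$ over the support of the measure, apply the approximation hypothesis to each fixed word $hg$ and $h$, and pass the limit through the finite sum to conclude that the same sequence $\{E_n\}$ works. The only cosmetic difference is that the paper writes the sum explicitly over the five generators $a$, $b$, $a^{-1}$, $b^{-1}$, $\sigma$ with equal weights $\frac{1}{5}$ and sums the pairwise equivalences $f(sgE_n) \sim f(sE_n)$, whereas you work with a general finitely supported $\mu$ and take the limits of numerator and denominator separately after dividing by $f(E_n)$.
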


\begin{proof}
	Note that $Pf(E)=\frac{1}{5}(f(aE)+f(bE)+f(a^{-1}E)+f(b^{-1}E)+f(E \Delta \{p\}))$. 
	
	Let $g$ be an arbitrary word. Then \[ \frac{Pf(gE_n)}{Pf(E_n)}=\frac{f(agE_n)+f(bgE_n)+f(a^{-1}gE_n)+f(b^{-1}gE_n)+f(gE_n \Delta \{p\})}{f(aE_n)+f(bE_n)+f(a^{-1}E_n)+f(b^{-1}E_n)+f(E_n \Delta \{p\})}. \] From the properties of $E_n$, we have that $\frac{f(agE_n)}{f(aE_n)} = \frac{f(agE_n)/f(E_n)}{f(aE_n)/f(E_n)} \rightarrow 1, \ n \rightarrow \infty$, or $f(agE_n) \sim f(aE_n), \ n \rightarrow \infty$. Summing up the equivalences for all five generators, we obtain the desired conclusion.
	
\end{proof}

Now we can move on to a more general observation involving linear combinations and iterations of the Markov operator.

\begin{lemma} \label{sigen}
	Let $f_i$, $\oneik$, be the min-functions obtained from $\varphi_i$ (not necessarily distinct. Let $s_i$ and $\lambda_i>0$, $\oneik$, be fixed elements of $G$
	and real numbers respectively. Then there is a sequence $\{E_n\}$ such that for any word $g \in \pfxf$:  \[\frac{\sum\limits_{1 \le i \le k} \lambda_i f_i (s_igE_n) }{ \sum\limits_{1 \le i \le k} \lambda_i f_i (s_i E_n)} \rightarrow 1, \ n \rightarrow \infty.\]
\end{lemma}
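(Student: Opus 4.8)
The plan is to recognize this lemma as a common generalization of Theorem \ref{second}, which handles positive linear combinations, and Theorem \ref{third}, which handles shifts (there realized as the five summands of the Markov operator). The only genuinely new feature is the arbitrary fixed prefixes $s_i \in \pfxf$, and these will simply be absorbed into the word-length bookkeeping that already underlies the strong approximation property. So I do not expect to construct anything new: I would reuse the sequence produced in Theorem \ref{second} and then repeat the "sum the asymptotic equivalences" move of Theorem \ref{third}, now carried out for the $s_i$ rather than for $a,b,a^{-1},b^{-1},\sigma$.

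First I would produce a single sequence $\{E_n\}$ that simultaneously approximates every $f_i$. This is exactly what the construction in the proof of Theorem \ref{second} delivers: choosing points $x_i$ deep on the skeleton with $\varphi_i(x_i)$ small and setting $E_n = \{x_1^{(+n^2)}, \ldots, x_k^{(+n^2)}\}$ yields not only an approximation of the sum but the per-index estimate $f_i(gE_n) \in \left((1-\tfrac1n)f_i(E_n),\,(1+\tfrac1n)f_i(E_n)\right)$ for each individual $i$ and every word $g$ of length at most $n$. I would record this as the starting point; when some $\varphi_i$ coincide the same construction applies verbatim, so the hypothesis that the $\varphi_i$ are ``not necessarily distinct'' causes no trouble. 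Since the $\varphi_i$ are strictly positive, all the $f_i$ are strictly positive, so every denominator appearing below is nonzero.

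Next, fix a word $g \in \pfxf$. For each $i$ both $s_i$ and $s_i g$ are fixed words, and $(s_i g)E_n = s_i(gE_n)$, so the strong approximation of $f_i$ along $\{E_n\}$ gives $\frac{f_i(s_i E_n)}{f_i(E_n)} \to 1$ and $\frac{f_i(s_i g E_n)}{f_i(E_n)} \to 1$. Dividing these two relations yields, for every $i$,
\[
\frac{f_i(s_i g E_n)}{f_i(s_i E_n)} \to 1, \qquad\text{i.e.}\qquad \lambda_i f_i(s_i g E_n) \sim \lambda_i f_i(s_i E_n), \quad n \to \infty.
\]
Finally I would pass from these term-by-term equivalences to the ratio of the two finite sums. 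Because there are only $k$ strictly positive terms, the elementary fact that $u_i(n)\sim v_i(n)$ for each $i$ forces $\sum_i u_i(n) \sim \sum_i v_i(n)$: given $\varepsilon>0$, for $n$ large enough the inequalities $(1-\varepsilon)v_i(n) \le u_i(n) \le (1+\varepsilon)v_i(n)$ hold for all $i$ at once, and summing sandwiches the ratio of the sums in $[1-\varepsilon,1+\varepsilon]$. Applying this with $u_i(n)=\lambda_i f_i(s_i g E_n)$ and $v_i(n)=\lambda_i f_i(s_i E_n)$ gives the claim. The only delicate point is the first step, namely that a single sequence approximates all the $f_i$ at once; but this is already contained in the proof of Theorem \ref{second}, and everything afterward is routine finite-sum asymptotics, so I expect no substantial obstacle.
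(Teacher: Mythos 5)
Your proposal is correct and follows essentially the same route as the paper: both rely on the per-index estimate $f_i(gE_n) \in \left((1-\tfrac1n)f_i(E_n),(1+\tfrac1n)f_i(E_n)\right)$ established in the proof of Theorem \ref{second}, divide two such ratios to get $\frac{f_i(s_igE_n)}{f_i(s_iE_n)} \to 1$, and then sum the weighted equivalences over the finitely many indices. The only cosmetic difference is that the paper re-indexes the sequence using the length budget $m = n + \max_i |s_i|$ to keep a uniform $\tfrac1n$ rate, while you invoke pointwise convergence for each fixed $g$, which suffices for the statement.
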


\begin{proof}
	First we show that for each $n \ge 4$ there is a finite set $E_n$ such that for each $\oneik$ and for any word $g$ of length $\le n$ the following two estimates hold:
	
	\begin{itemize}
		\item $\left| \frac{f_i(s_igE_n)}{f(E_n)}-1 \right| < \frac{1}{n}$;
		\item $\left| \frac{f_i(s_iE_n)}{f(E_n)}-1\right| < \frac{1}{n}$.
	\end{itemize}
	
	This holds because according to the proof of Theorem \ref{second}, there exists a finite set $E$ such that for all elements $g$ such that $|g|\le m = n+\max\limits_{\oneik} |s_i|$ and for all $\oneik$ we have that $|\frac{f_i(gE)}{f_i(E)} -1| < \frac{1}{m} < \frac{1}{n}$. Since the lengths of $s_ig$ and $s_i$ do not exceed $m$, both inequalities are true for $E_n:=E$. Thus, for any $g$ and all $\oneik$:
	
	\[ \frac{f_i(s_igE_n)}{f_i(s_iE_n)} = \frac{f_i(s_igE_n)/f_i(E_n)}{f_i(s_iE_n)/f_i(E_n)} \rightarrow \frac{1}{1} = 1. \]
	
	Fix $\varepsilon > 0$. From the convergence above, there is a number $N \in \mathbb{N}$ such that for all $n \ge N$: 	\[ f_i(s_igE_n) \in \left((1-\varepsilon){f_i(s_iE_n)} , (1+\varepsilon){f_i(s_iE_n)}\right). \] Summing up the inequalities above with corresponding weights, we get \[ \sum\limits_{1 \le i \le k} \lambda_i f_i (s_igE_n) \in \left((1-\varepsilon)\sum\limits_{1 \le i \le k} \lambda_i f_i (s_i E_n) , (1+\varepsilon)\sum\limits_{1 \le i \le k} \lambda_i f_i (s_i E_n)\right), \] which implies \[ \frac{\sum\limits_{1 \le i \le k} \lambda_i f_i (s_igE_n)}{\sum\limits_{1 \le i \le k}\lambda_i f_i (s_i E_n)} \in (1-\varepsilon, 1+\varepsilon). \] The latter is equivalent to convergence to 1.
	
\end{proof}

\begin{theorem}\label{fourth}
	Let $f_i$, $\oneik$, be min-functions and let $P$ be the Markov operator as in Theorem \ref{third}. Then for any integers $n_i \ge 0$, $P^{n_1}f_1 + \ldots + P^{n_k}f_k$ has an approximation in a strong sense.
\end{theorem}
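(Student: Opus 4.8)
The plan is to write each iterate $P^{n_i} f_i$ out explicitly and observe that the total sum is exactly a finite positive linear combination of shifted min-functions, so that Lemma \ref{sigen} applies verbatim.

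First I would recall that the Markov operator averages against $\mu$, so that $P^{n} f(E) = \suml_h \mu^{*n}(h)\, f(hE)$, where the sum runs over the support of the $n$-fold convolution $\mu^{*n}$. By the standing assumption $\mu$ is finitely supported on the generating set $\{a, b, a^{-1}, b^{-1}, \sigma\}$ of $\pfxf$, so each convolution $\mu^{*n_i}$ is again finitely supported, with support contained in the words of length at most $n_i$. Hence every
\[ P^{n_i} f_i(E) = \suml_h \mu^{*n_i}(h)\, f_i(hE) \]
is a genuine \emph{finite} sum, and the coefficients $\mu^{*n_i}(h)$ are strictly positive on that support.

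Next I would collapse the $k$ iterates into a single finite sum. Writing
\[ \Big( \suml_{\oneik} P^{n_i} f_i \Big)(E) = \suml_{\oneik} \suml_h \mu^{*n_i}(h)\, f_i(hE), \]
I reindex the double sum as $\suml_{1 \le j \le m} \lambda_j\, \tilde f_j(s_j E)$, where $j$ ranges over all pairs $(i,h)$ with $h$ in the support of $\mu^{*n_i}$, the group element is $s_j := h \in \pfxf$, the weight is $\lambda_j := \mu^{*n_i}(h) > 0$, and $\tilde f_j := f_i$ is one of the given (not necessarily distinct) min-functions. This is precisely the shape of the linear combination treated in Lemma \ref{sigen}.

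It then remains to invoke Lemma \ref{sigen} for this combination: it furnishes a sequence $\{E_n\}$ such that, for every word $g \in \pfxf$,
\[ \frac{\suml_{j} \lambda_j\, \tilde f_j(s_j g E_n)}{\suml_{j} \lambda_j\, \tilde f_j(s_j E_n)} \longrightarrow 1, \quad n \to \infty, \]
and since the numerator is $\big( \suml_i P^{n_i} f_i \big)(g E_n)$ and the denominator is $\big( \suml_i P^{n_i} f_i \big)(E_n)$, this is exactly the strong approximation property for $\suml_i P^{n_i} f_i$. I do not expect a genuine obstacle beyond the bookkeeping: the whole argument rests on the fact that finitely many iterations of $P$ against a finitely supported measure yield a finite sum of shifted copies of the $f_i$, matching the hypothesis of Lemma \ref{sigen}. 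The essential limitation is exactly this finiteness, which is why the method does not extend to infinite convergent sums $\suml_i P^{n_i} f_i$: there the explicit expansion ceases to be a finite combination and Lemma \ref{sigen} no longer applies.
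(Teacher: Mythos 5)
Your proposal is correct and matches the paper's own proof essentially verbatim: the paper likewise expands each $P^{n_i}f_i(E)$ as the finite sum $\suml_s \mu^{(n_i)}(s) f_i(sE)$ over the support of the convolution power, recognizes the total as a positive linear combination of shifted min-functions, and applies Lemma \ref{sigen}. Your reindexing of the double sum and your closing remark about why finiteness is essential (and why the argument fails for infinite sums) are exactly the points the paper makes as well.
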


\begin{proof}
	Note that $P^{(n_1)}f_1(E_n) = \sum\limits_s \mu^{(n)}(s) f_1(sE_n)$, where $\mu^{(n)}$ is the $n$-th convolution of the uniform measure and the summation is taken over its support. Now it can be seen that
	
	\[ \frac{\sum\limits_i P^{n_i}f_i(gE_n)} {\sum\limits_i {P^{n_i}f_i(E_n)}} = \frac{\sum\limits_{i, |s_i|\le n_i} \mu^{(n_i)} f_i(s_igE_n)}{\sum\limits_{i, |s_i|\le n_i} \mu^{(n_i)} f_i(s_iE_n)}, \] which converges to 1 by Lemma \ref{sigen}.
\end{proof}

The same result is true for countable sums of min-functions whenever they are well-defined.

\begin{theorem}
	If $\varphi_i, \ i \ge 1$, are functions satisfying the condition above such that $\sum\limits_{i=1}^\infty \varphi_n(p) < \infty$, and $f_i$ are the corresponding min-functions, then there is a sequence $\{E_n\}$ for $f = \sum\limits_{i=1}^\infty f_i$ satisfying $\frac{f(gE_n)}{f(E_n)} \rightarrow 1, \ n \rightarrow \infty,$ for all $g \in \pfxf$.
\end{theorem}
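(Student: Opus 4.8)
The plan is to prove the statement by truncation, reducing the countable sum to the finite sums already handled in Theorem \ref{second} and controlling the tail by means of the summability hypothesis. First I would normalize: replacing each $\varphi_i$ by $\varphi_i - \inf_X \varphi_i$ preserves superharmonicity, keeps the maximum at $p$, and does not increase $\varphi_i(p)$, so $\sum_i \varphi_i(p)$ stays finite; any constant $\varphi_i$ contributes the same constant to numerator and denominator of the ratio and may be discarded. Thus I assume $\inf_X\varphi_i = 0$ for every $i$. Set $\delta_K = \sum_{i>K}\varphi_i(p)$, so $\delta_K \to 0$. Since each min-function satisfies $0 \le f_i(E) \le \varphi_i(p)$, the tail $R_K := \sum_{i>K} f_i$ obeys $0 \le R_K(E) \le \delta_K$ uniformly in $E$, and we may write $f = S_K + R_K$ with $S_K = \sum_{i\le K} f_i$ a finite sum of exactly the type covered by Theorem \ref{second}.

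Next, for a given $n$ and a truncation level $K$, I would take the set $E_n$ produced by the construction of Theorem \ref{second} applied to $S_K$, so that every word $g$ with $|g| \le n$ satisfies $|S_K(gE_n) - S_K(E_n)| \le \frac1n S_K(E_n)$. Recall that this construction places one point at depth $n^2$ on a hair for each targeted function, which guarantees (via Propositions \ref{concave} and \ref{est} together with Lemma \ref{mins}) that the moves $a,b,a^{-1},b^{-1}$ change every $f_i(E_n)$ by a factor in $(1-\frac1n,\,1+\frac1n)$, and that the switch $\sigma$ cannot lower $f_i(E_n)$ for $i\le K$ because $f_i(E_n) < \min_{\ball}\varphi_i$. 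Combining this finite estimate with the uniform bound $|R_K(gE_n) - R_K(E_n)| \le \delta_K$ and with $f(E_n)\ge S_K(E_n)$ gives, for all $|g|\le n$,
\[ \left|\frac{f(gE_n)}{f(E_n)} - 1\right| \le \frac1n + \frac{\delta_K}{f(E_n)}. \]
By Corollary \ref{north-equiv} it then suffices to choose $K = K_n \to \infty$ so that $\delta_{K_n}/f(E_n) \to 0$.

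The hard part is precisely this last step, because $E_n$ itself depends on $K_n$ and $f(E_n)\to 0$: each targeted point sits below the ball-minimum of its own function, and every additional targeting point can only lower all the minima $\min_{E_n}\varphi_i$, so one cannot fix $K_n$ first, read off $f(E_n)$, and conclude $\delta_{K_n} = o(f(E_n))$ automatically. The summability hypothesis is what rescues the argument: since $\varphi_i \le \varphi_i(p)$ and $\sum_i\varphi_i(p)<\infty$, the tail functions are uniformly small and need no individual switch-protection, so only the finitely many functions $i\le K_n$ require a targeting hair. I would therefore build $E_n$ adaptively, placing the hair for $\varphi_i$ in a region where the already-chosen $\varphi_1,\dots,\varphi_{i-1}$ remain large — such regions being available because, by Proposition \ref{minimum}, each $\varphi_i$ takes small values only deep in the tree, where the exponentially many branches give room to keep the finitely many smallness regions essentially disjoint — so that each new point drags the earlier minima down only by a controlled amount and $S_{K_n}(E_n)$ stays comparable to $\sum_{i\le K_n}\min_{\ball}\varphi_i$. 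Stopping at the first $K_n$ for which $\delta_{K_n} < \frac1n S_{K_n}(E_n)$ then yields the desired sequence. The main obstacle is to make this last separation quantitative, i.e.\ to guarantee that $S_{K_n}(E_n)$ does not collapse faster than $\delta_{K_n}$; this is where the detailed tree geometry of the Schreier graph must be exploited, and it is the one point of the argument that does not reduce directly to Theorem \ref{second}.
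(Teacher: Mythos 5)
You have set up exactly the paper's strategy: split $f = S_K + R_K$ with $S_K=\sum_{i\le K}f_i$ handled by Theorem \ref{second}, bound the tail uniformly by $R_K(E)\le\delta_K:=\sum_{i>K}\varphi_i(p)$ (valid because $f_i(E)\le\varphi_i(p)$ for every finite $E$), and reduce to Corollary \ref{north-equiv}; your inequality $\left|\frac{f(gE_n)}{f(E_n)}-1\right|\le\frac1n+\frac{\delta_K}{f(E_n)}$ is essentially the estimate the paper works with. But your proposal stops exactly where a proof is required: you yourself label the comparison of $\delta_{K_n}$ with $S_{K_n}(E_n)$ as ``the main obstacle'' and offer only the heuristic that the hairs serving different $\varphi_i$ can be placed in essentially disjoint smallness regions. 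That heuristic fails in general. Take $\varphi_1$ arbitrary (normalized so $\inf\varphi_1=0$) and $\varphi_i=\min(\varphi_1,4^{-i})$ for $i\ge 2$; these are positive, superharmonic, maximal at $p$, and summable at $p$. Any admissible placement point for $\varphi_i$, i.e.\ any skeleton point $z$ with $\varphi_i(z)$ below the threshold $\min_{\mathcal{B}_n}\varphi_i/(16n^2)$ demanded by the construction, automatically satisfies $\varphi_1(z)=\varphi_i(z)<4^{-i}/(16n^2)$: the smallness regions are nested, not disjoint, and by Proposition \ref{est} every point placed for a later function drags $\min_{E_n}\varphi_1$, and with it all the values $f_i(E_n)$, further down. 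So the gap is genuine, and the mechanism you sketch for closing it breaks already on this simple family.

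The idea you are missing is a reversal of quantifiers, not a geometric separation argument. The paper does \emph{not} fix the truncation level first. It first places the point for $\varphi_1$ alone: a skeleton point $z_1$ with $\delta:=\varphi_1(z_1)<\varepsilon_1/(16n^2)$ and $y_1=z_1^{(+4n^2)}$. Only after the positive number $\delta$ is in hand does it choose the truncation level, taking $N$ with $\sum_{i>N}\varphi_i(p)<\frac{\delta}{3n}$ --- possible by the summability hypothesis, and unaffected by whatever points $y_2,\dots,y_N$ are added afterwards, since this tail bound is uniform over all finite sets. The remaining points are then placed as in Theorem \ref{second}, and the proof closes by asserting $S_1:=\sum_{i\le N}f_i(E_n)=\delta+r$ with $r>0$, so that the tail $<\frac{\delta}{3n}$ is dominated by $\frac{S_1}{2n}$ and the $(1\pm\frac{1}{2n})$ control of the finite part becomes $(1\pm\frac{1}{n})$ control of $f$. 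Thus the truncation level is a function of the first placed point, never the other way around, and no statement of the form $\delta_{K_n}=o(f(E_n))$ with $K_n$ chosen in advance is ever needed. In fairness to you: the one inequality the paper does not argue, namely that adding $y_2,\dots,y_N$ cannot push $S_1$ below $\delta$, is precisely the phenomenon you flagged, and the family $\varphi_i=\min(\varphi_1,4^{-i})$ shows it cannot be taken for granted (there the theorem itself still holds, since $f=h\circ f_1$ for a concave increasing $h$ with $h(0)=0$, so the single-point sets of Theorem \ref{first} already work). But identifying the weak point is not the same as repairing it: as written, your proposal leaves the decisive step unproved, while the paper supplies a concrete ordering of choices that your argument lacks.
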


\begin{proof}
	As in the proof of Theorem \ref{second}, denote by $\ball$ the set of all points in $X$ at a distance at most $n$ from $p$. Put $\varepsilon_i = \min\limits_{x \in \ball} f_i(x)$.
	
	Put $t_i = \varphi_i(p)$. Without loss of generality, we can assume $\sum\limits_{i=1}^\infty t_n = \sum\limits_{i=1}^\infty \varphi_n(p) = 1$.
	
	As before, we are trying to construct a sequence of sets $E_n$ such that for all words $g$ of length not exceeding $n$, we have $\left|\frac{f(gE_n)}{f(E_n)}-1\right|<\frac{1}{n}$. 
	
	Fix $n \in \mathbb{N}$. First, we take a point $z_1$ in the skeleton of the tree such that $\delta := \varphi_1(z_1) < \frac{\varepsilon_1}{16n^2}$ and define $y_1=\hair{z_1}{4n^2}$. We can verify that if $|g| \le n$, then $\frac{f_1(g\{y_1\})}{f_1(\{y_1\})} \in (1-\inv{2n}, 1+\inv{2n})$, because if $f(g\{y_1\})$ has any points outside the hair, they are in $\ball$ and hence do not influence the value of $f$ as \[ f(\hair{z_1}{4n^2+n}) \le (12n^2+3n+1) f(z_1) < (12n^2+3n+1)\frac{\varepsilon_1}{16n^2} \le \varepsilon_1. \] Outside $\ball$, the only point $g(\{y_n\})$ can contain is $\hair{z_1}{4n^2+s}, \ |s| \le n$, for which the proof is analogous to that in Theorem \ref{first}. 
	
	Let $N$ be an integer index such that $\sumr{i}{N+1}{\infty} t_i < \frac{\delta}{3n}$. For each $2 \le i \le N$, we can find a point $y_i$ such that $\frac{f_i(g\{y_i\}) } {f_i(\{y_i\})} \in (1-\inv{2n}, 1+\inv{2n})$. 
	
	In order to do it, we choose $z_i$ on the skeleton such that $f_i(z_i) < \frac{\varepsilon_i}{16n^2}$ and then define $y_i=\hair{z_i}{4n^2}$. The proof that $\frac{f_i(g\{z_i\})}{f_i(\{z_i\})} \in (1-\inv{2n}, 1+\inv{2n})$ is equivalent to the case $n=1$.
	
	Finally, define $E_n = \{y_1, y_2, \ldots, y_N\}$. From the construction it follows that $\frac{f_i(gE_n)}{f_i(E_n)} \in  (1-\inv{2n}, 1+\inv{2n})$ for all $1 \le i \le n$. This is true because $gE_n$ consists of two subsets: a subset of $\ball$ and a set of the form $\{\hair{z_1}{4n^2+s_1}, \ldots, \hair{z_N}{4n^2+s_N}\}$. The subset of $\ball$ does not influence the values of $f_i$, and hence $f$ (by construction). For the latter set, we use the argument from Theorem \ref{second}. This statement also implies $\frac{\sumr{i}{1}{N}f_i(gE_n)}{\sumr{i}{1}{N}f_i(E_n)} \in (1-\inv{2n}, 1+\inv{2n})$.
	
	It remains to notice that $\suml_{i=N+1}^\infty f_i(gE_n) < \frac{\delta}{3n}$ and $\suml_{i=N+1}^\infty f_i(E_n) < \frac{\delta}{3n}$. Define $S_1 = \sumr{i}{1}{N} f_i(E_n) = \delta + r$, where $r>0$. Then $f(E_n) \in [\delta+r, \delta(1+\inv{3n})+r]$. 
	
	Now we can see that 
	\begin{multline*}
		f(gE_n) = \sumr{i}{1}{N}f_i(gE_n) + \sumr{i}{N+1}{\infty}f_i(gE_n) \le (1+\inv{2n})(\delta+r)+\frac{\delta}{3n} \le \\ \le (1+\inv{n})(\delta+r) \le (1+\inv{n})f(E_n). 
	\end{multline*}
	
	The central inequality is equivalent to $\frac{\delta}{3n} \le \frac{\delta+r}{2n}$, which is natually true.
	
	Similarly, 
	\begin{multline*} f(gE_n) \ge \sumr{i}{1}{N}f_i(gE_n) \ge (1-\inv{2n})(\delta+r) \ge (1-\inv{n})(\delta(1+\inv{3n})+r) \ge (1-\inv{n})f(E_n), 
	\end{multline*} 
	where the central inequality comes from $\frac{\delta+r}{2n} > \frac{\delta}{2n} > \frac{\delta}{3n}(1-\inv{n})$. The proof is complete.
\end{proof}

\subsection{$E_n$ can have unbounded size}

It is also the case that for some superharmonic functions $f$ the minimal size of $E_n$ as a finite set is not necessarily bounded as a function of $n$.

The idea of one possible counterexample is to construct an infinite sequence of superharmonic functions $\varphi_n : X \rightarrow \mathbb{R}$, each giving rise to a min-function $f_n$ on finite subsets of $X$. As proved before, if the sum $f = \sum\limits_{n=0}^\infty f_i$ is defined, it is also superharmonic and has an $\{E_n\}$-approximation in the strong sense.

Let us introduce some extra notation.

Denote subtrees of $X$ by $T_0, \ T_1, \ldots, T_n, \ldots$ as follows:

$T_0$ includes the right subtree of $X_{sk}$ with all attached hairs. $T_1$ is the right subtree of the underlying binary tree of $X_sk \backslash T_0$, also with all corresponding hairs attached. Continuing in the same fashion, $T_n$ is the right subtree of the underlying binary tree of $X_sk \backslash T_{n-1}$ with hairs. 

\begin{figure}[h]
	\centering
	\includegraphics[width=8cm]{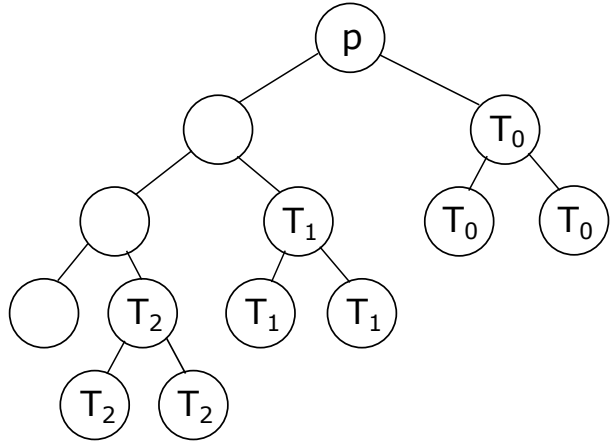}
	\caption{Structure of subtrees $T_i$ (with no hairs)}
	\label{fig:tn}
\end{figure}

Define $\varphi_n$ on $X$ as follows:

$\varphi_n (x) =
\begin{cases}
\frac{1}{2^n}, \ x \notin T_n \\
\frac{1}{2^d}, \ x \in T_n \cap X_{sk}, \  d=d(x,p) \\
\varphi_n(z), \ x=z^{(+m)}, \ m \in \mathbb{N}
\end{cases}
$

Notice that $\varphi_n$ is always constant on hairs. The graphs of $\varphi_n$ on the binary tree for small values of $n$ are given in Fig. \ref{fig:values}.

\begin{figure}[h]\label{fig:values}
	\centering
	\includegraphics[width=15cm]{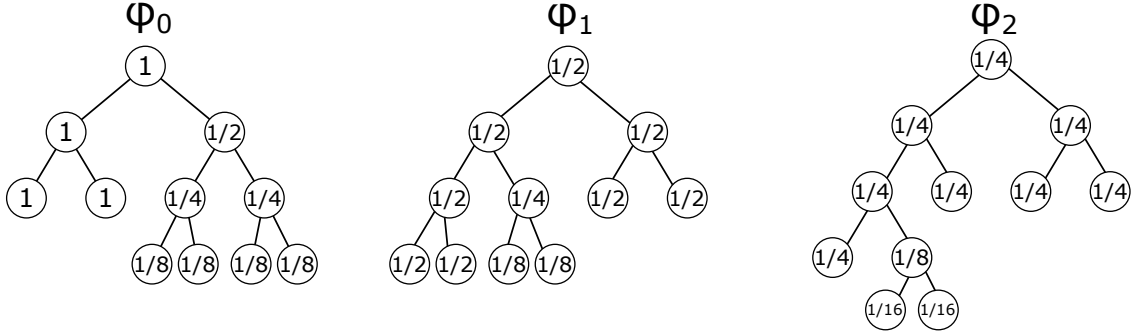}
	\caption{Values of $\varphi_0$, $\varphi_1$, $\varphi_2$ on $X_{sk}$ around $p$}
\end{figure}

For any $n$ and any finite set $E \subset X$, $f_n(E)<\frac{1}{2^n}$, hence $f(E)$ is well-defined, bounded ($0 < f(E) < 2$) and superharmonic.

The amenability criterion from \ref{north-strong} can be reformulated as follows: there is a sequence $\{E_n\}$ such that for any word of length $\le n$ representing an element $g$ we have that $|\frac{f(gE_n)}{f(E_n)} - 1| < \frac{1}{2^n}$. (In previous sections we took the inequality $|\frac{f(gE_n)}{f(E_n)} - 1| < \frac{1}{n}$ instead, but $\frac{1}{2^n}$ is more suited for the purposes of this construction).

\begin{proposition}
	Let $E_n$ be a sequence in $\mathcal{P}_f(X)$ such that for any word $g \in S^*$ of length $\le n$ we have that $|\frac{f(gE_n)}{f(E_n)} - 1| < \frac{1}{2^n}$. Then $|E_n| \rightarrow \infty$, $n \rightarrow \infty$.
\end{proposition}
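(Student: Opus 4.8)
The plan is to read off a lower bound for $|E_n|$ from the number of subtrees $T_k$ that $E_n$ is forced to meet. Write $f=\sum_{k\ge 0}f_k$, where $f_k$ is the min-function of $\varphi_k$. Two elementary facts drive everything. First, since $\varphi_k$ attains its maximal value $2^{-k}$ at every point outside $T_k$ and strictly smaller values inside $T_k$, for any finite $E$ we have $f_k(E)=\min_{x\in E}\varphi_k(x)\le 2^{-k}$, with equality \emph{if and only if} $E\cap T_k=\varnothing$. Second, the subtrees $T_0,T_1,\dots$ are pairwise disjoint, so $|E|\ge \#\{k:\ E\cap T_k\neq\varnothing\}$. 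Hence it suffices to prove that the approximation hypothesis forces $E_n$ to meet $T_k$ for a number of indices $k$ tending to infinity with $n$.

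The tool I would use to detect a missing subtree is a \emph{switch-and-push} test. Fix $k$ with $k+2\le n$, let $u_k$ be the top vertex of $T_k$ (the right child of the $k$-th spine vertex), which lies at distance $k+1$ from $p$, and let $w_k$ be a geodesic word with $w_k\cdot p=u_k$, so $|w_k|=k+1$. Assuming $p\notin E_n$ (the case $p\in E_n$ is identical after replacing $E_n$ by $\sigma E_n$, which leaves every $f_k$ unchanged because $\varphi_k(p)$ is maximal and only shifts admissible word-lengths by $1$), the word $g_k=w_k\sigma$ satisfies $g_kE_n=w_kE_n\cup\{u_k\}$. Because $u_k\in T_k$ and the $T_m$ are disjoint, adjoining $u_k$ can alter only the $k$-th summand, so
\[
f(w_kE_n)-f(g_kE_n)=\max\!\left(0,\ f_k(w_kE_n)-2^{-(k+1)}\right).
\]
Both $w_k$ and $g_k$ have length at most $n$, so the hypothesis bounds $|f(w_kE_n)-f(E_n)|$ and $|f(g_kE_n)-f(E_n)|$ by $2^{-n}f(E_n)$; the common global move $w_k$ cancels, yielding $f_k(w_kE_n)\le 2^{-(k+1)}+2^{1-n}f(E_n)$. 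Since $0<f(E_n)\le 2$, for every $k\le n-4$ the right-hand side is strictly below $2^{-k}$, and by the first fact above this means that $w_kE_n$ meets $T_k$.

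The remaining, and genuinely delicate, point is that the generators act \emph{globally}: $w_k$ moves every point of $E_n$ simultaneously, so "$w_kE_n$ meets $T_k$" does not by itself certify that $E_n$ meets $T_k$ — a point of $E_n$ sitting just outside $T_k$ may be dragged in by $w_k$ and thereby "fool" the test. This is the main obstacle, and it is exactly where the concrete geometry of the $F$-action has to enter. I would handle it by bounding the number of fooled indices: if $|E_n|\le M$ along a subsequence, then among the $n-3$ indices $k\le n-4$ at most $M$ are occupied, and for every \emph{unoccupied} such index the proven fact "$w_kE_n$ meets $T_k$" can only be due to dragging, so each unoccupied index is fooled. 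It remains to show (using the action explicitly, e.g. by choosing the geodesics $w_k$ coherently so that the regions $w_k^{-1}T_k$ cover each point of $X$ only boundedly often) that a single point of $E_n$ can be dragged into $T_k$ for at most a bounded number $C$ of indices $k$. Granting this, the number of fooled indices is at most $CM$, whence $n-3-M\le CM$, which is false for large $n$ and contradicts $|E_n|\le M$. Therefore $|E_n|\to\infty$, and the expected hard step is precisely the boundedness of the fooling count, i.e.\ controlling the global displacement caused by the $w_k$.
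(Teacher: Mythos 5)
Your first half is sound: the identity $f(w_kE_n)-f(g_kE_n)=\max\left(0,\ f_k(w_kE_n)-2^{-(k+1)}\right)$ is correct (adjoining $u_k$ can only affect the $k$-th summand, since $\varphi_m(u_k)=2^{-m}$ is the maximal value of $\varphi_m$ for $m\neq k$), and the arithmetic showing that $w_kE_n$ must meet $T_k$ for all $k\le n-4$ is right. But the argument then stops at exactly the point you flag yourself: the ``bounded fooling count'' claim --- that a single point of $E_n$ can be dragged into $T_k$ by $w_k$ for at most boundedly many indices $k$ --- is asserted, not proved, and it is not a routine verification. It is a statement about the fine structure of the action of $a$ and $b$ on the Schreier graph (how points on the spine, on the hairs, and inside the subtrees $T_m$ are moved), which is precisely the content a complete proof must supply. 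Note that the claim is even false without your reduction excluding $p$: $w_kp=u_k\in T_k$ for \emph{every} $k$, so $p$ has infinite fooling count; whether any other vertex behaves this badly can only be settled by the same explicit case analysis of the action that your outline defers. So as it stands this is a genuine gap, and it sits at the crux of the argument.

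The paper closes precisely this hole with one structural observation your proposal never uses: under the letters $a$, $b$ (no inverses) and $\sigma$, \emph{every} summand $f_k$ is non-increasing --- $\sigma$ never changes any $f_k$ because $\varphi_k(p)$ is maximal, and $a$, $b$ only push points deeper into the tree (in particular each subtree $T_m$ is invariant). Consequently, if $E_n\cap T_i=\varnothing$ (so that $f_i(E_n)=2^{-i}$), then the single word $g=ba^i\sigma$ (or a word in $\{a,b\}^*$ pushing the nearest point of the golden path to $ba^i.p$), of length at most $i+2$, forces $f_i(gE_n)\le 2^{-(i+1)}$ while no other summand increases; hence $f(gE_n)\le f(E_n)-2^{-(i+1)}$, and since $f\le 2$ this gives $\left|\frac{f(gE_n)}{f(E_n)}-1\right|\ge 2^{-(i+2)}\ge 2^{-n}$ for $i\le n-2$, contradicting the hypothesis. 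Thus $E_n$ meets every $T_i$ with $i\le n-2$, so $|E_n|\ge n-1$. The monotonicity makes the dragging issue vanish: the contradiction comes from a guaranteed \emph{decrease} of $f$, so one never needs to convert ``some point of $w_kE_n$ lies in $T_k$'' back into ``$E_n$ meets $T_k$''. To salvage your two-word cancellation scheme you would have to prove your covering claim by the same kind of explicit analysis of the action; the paper's route shows this can be bypassed entirely.
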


\begin{proof}
	
	Fix $n \ge 2$. Let us assume that for some $i$, $0 \le i \le n-2$, $E_n$ contains no points in $T_i$. Then it is possible to find a word $g$, $|g| \le i+2$, such that:
	
	$f_i(gE_n) \le f_i(E_n)-\frac{1}{2^{i+1}}$
	
	$f_k(gE_n) \le f_k(E_n)$, $k \neq i$.
	
	Let us construct this word $g$ for a given $i$. Define the \textit{golden path} as the set of points \{$p$, $a.p$, $a^2.p$, \ldots, $a^i.p$, $ba^i.p$\}. 
	
	\begin{figure}[h]
		\centering
		\includegraphics[width=6cm]{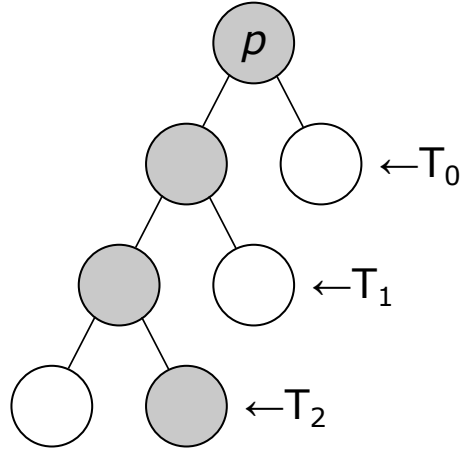}
		\caption{\textit{Golden path} for $n=2$}
	\end{figure}
	
	If $E_n$ has no points on the \textit{golden path}, we add one using the switcher $\sigma$ at $p$ and using $a$ and $b$ push it down the \textit{golden path}. In this case, $g=ba^i\sigma$. Otherwise, we take $g \in \{a,b\}^*$ which pushes the nearest point of the path to $ba^i.p$. Note that $\varphi_i$ is equal to $\frac{1}{2^i}$ on all points of the golden path except $ba^i.p$ where it is $\frac{1}{2^{i+1}}$. In this way, if $E_n$ contains no points in $T_i$, we have that $f(E_n) = \frac{1}{2^i}$, however, with our construction of $g$, $f(gE_n) \le \frac{1}{2^{i+1}}$. Also, if $g \in \{a,b\}^*$ or $g \in \{a,b\}^*\sigma$, as in our case, then $f_k(gE_n) \le f_k(E_n)$. This happens because applying $\sigma$ does not change the value of any $f_k$, and applying $a$ and $b$ can only make it smaller.
	
	Therefore, $\left|\frac{f(gE_n)}{f(E_n)} -1 \right| \ge \frac{1/2^{i+1}}{f(E_n)} \ge \frac{1}{2^{i+2}} \ge \frac{1}{2^n}$, which contradicts the assumption. Hence, $E_n$ must contain at least one point in each of the sets $T_0$, $T_1$, \ldots, $T_{n-2}$ and $|E_n| \ge n-1$. This immediately implies the statement of the lemma.
\end{proof}

Now we shall explicitly construct a sequence $\{E_n\}$ satisfying the condition from Theorem \ref{north-strong}. Our construction will produce a sequence with a stronger property: $f_i(gE_n)=f_i(E_n)$ for any $i$ and for any word $g$ of length $\le n$. This, of course, also means $f(gE_n)=f(E_n)$, which satisfies our requirement.

Take $E_n = \{ a^{-n}b^n.p, a^{-n}b^n.ap, \ldots, a^{-n}b^na^{n-1}.p \}$ . Let $g$ be a word of length $\le n$.

\begin{figure}[h]
	\centering
	\includegraphics[width=7cm]{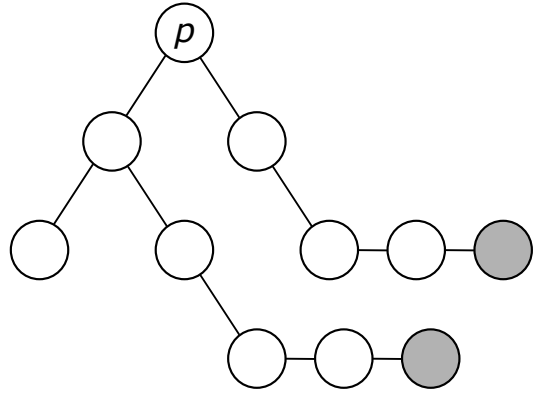}
	\caption{$E_2$ (in grey) as a subset of $X$}
\end{figure}

$E_n$ has exactly one point in each of the sets $T_0$, $T_1$, \ldots, $T_{n-1}$. Each of these points is located in the hair $n$ points away from the binary skeleton. Let $i$ be an index such that $0 \le i \le n-1$. If $g$ contains only shifts, i.e. letters in $\{a,b,a^{-1},b^{-1}\}$ without switches, it cannot change the value of $f_i$ since each $\varphi_i$ is constant on hairs. If the word contains a switch, it might add a point at $p$, but to change the value of $f_i$ this point should move at least $n+1+i$ times -- which is not allowed. Hence, $f_i(gE_n)=f_i(E_n)$ for $i<n$.

\begin{figure}[h]
	\centering
	\includegraphics[width=11cm]{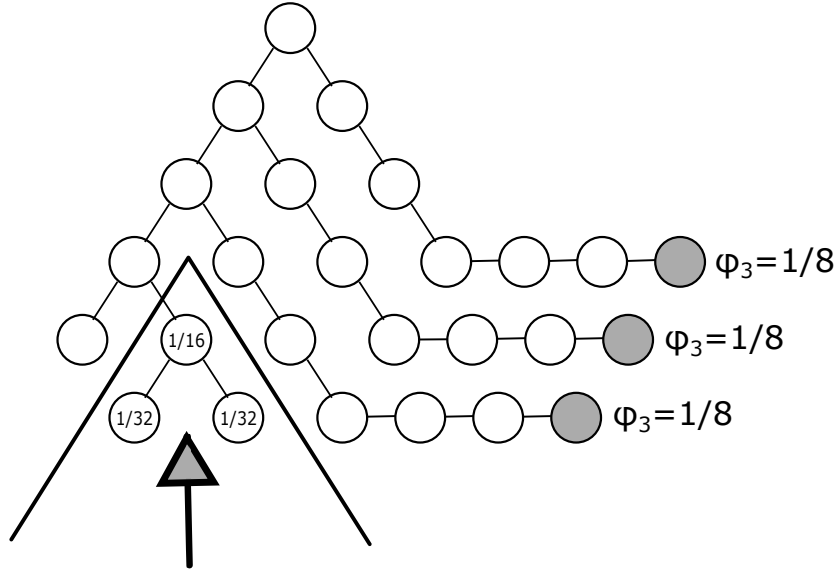}
	\caption{Location of $E_3$ (in grey) and points in $X$ where $\varphi_3< \frac{1}{8}$}
\end{figure}

Now take $i \ge n$. It is easy to see that $f_i(E_n)=\frac{1}{2^i}$ since $E_n$ has no points in $T_i$. In order to get $f_i(gE_n)<\frac{1}{2^i}$, we need $gE_n$ to have at least one point in $T_i$. By definition of $\varphi_i$, it can be done in at least $i+2 > n$ steps. Hence, $f_i(gE_n)=f_i(E_n)$ even when $i \ge n$.

\section{Generalized min-functions}

Denote by $T$ the subspace of sequences in $(0,1]^\mathbb{N}$ whose all but finitely many coordinates are equal to 1. Let $g : T \rightarrow \mathbb{R}^+$ be a function satisfying the following:

\begin{itemize}
	\item $g$ is non-negative and non-decreasing in any variable;	
	\item $g$ is concave, i.e. $\lambda g(u)+(1-\lambda)g(v) \le g(\lambda u + (1-\lambda)v)$ for $0 \le \lambda \le 1$ and $u,v \in T$;
	\item $g$ is symmetric, i.e. $g(x_1, x_2, \ldots, x_n, \ldots) = g(x_{\pi(1)}, \ldots, x_{\pi(n)}, \ldots)$ for any bijection $\pi : \mathbb{N} \rightarrow \mathbb{N}$.
\end{itemize}

Let $\varphi$ be a superharmonic function on $X$ achieving its maximum $1$ at the root point $p$. Let $E = \{q_1, \ldots, q_n\} \in \mathcal{P}_f(X)$ be a finite set. Define $\\ f(E) = g(\varphi(q_1), \ldots, \varphi(q_n), 1, 1, 1, \ldots)$. Then $f$ is non-negative, superharmonic on $\mathcal{P}_f(X)$ and invariant with respect to the switch. 

Alternatively, one could view $g$ as a collection of concave functions $g_n$, $n \ge 0$, where $g_n(x_1, \ldots, x_n) = g(x_1, \ldots, x_n, 1, 1, 1, \ldots)$. In this way, each $g_n$ is a restriction of $g_{n+1}$ where an arbitrary coordinate is taken to be $1$, and $g_n$ are defined by the same properties as $g$.

This gives us a big class of superharmonic functions on the Lamplighter group $\pfxf$. The existence of strong approximations for them is not immediately obvious since there is not much information on what happens if we add or delete a point close to $p$.

An example of such a function is $g(x_1, x_2, \ldots) = \min(h(x_1),h(x_2), \ldots)$, where $h: (0,1] \rightarrow \mathbb{R}$ is a non-negative, non-decreasing and concave function. In particular, this construction produces all min-functions. The theorem below further generalizes this class.


\begin{theorem}\label{ext}
	
	Let $n$ be a positive integer and $r : (0,1]^n \rightarrow \mathbb{R}^+$ be a symmetric non-negative concave function, non-decreasing in each variable. Then the function $g(x_1, \ldots, x_n,\ldots) = r(x_1, \ldots, x_n)$ for $x_1 \le x_2 \le \ldots \le x_n \le \ldots = 1$, extended to $T$ by sorting, satisfies the conditions above. In particular, if $n=1$ and $r(x)=x$ (or, more generally, $n \in \mathbb{N}$ and $r(x_1, \ldots, x_n) = \min(x_1, \ldots, x_n)$), this construction yields the standard min-function. 
\end{theorem}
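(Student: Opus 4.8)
The plan is to treat the three required properties separately, disposing of symmetry, non-negativity and monotonicity at once and concentrating all the effort on concavity. For $u\in T$ write $u^{\uparrow}=(u^{\uparrow}_1,u^{\uparrow}_2,\dots)$ for the increasing rearrangement, so that $g(u)=r(u^{\uparrow}_1,\dots,u^{\uparrow}_n)$, and set $S_k(u)=\sum_{i=1}^{k}u^{\uparrow}_i=\min_{|F|=k}\sum_{i\in F}u_i$, the sum of the $k$ smallest coordinates (the minimum is attained since only finitely many coordinates differ from $1$). Symmetry is built into the definition, because permuting the coordinates of $u$ leaves $u^{\uparrow}$ unchanged. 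Non-negativity is inherited from $r$. For monotonicity, if $u\le u'$ coordinatewise then each order statistic obeys $u^{\uparrow}_i\le u'^{\uparrow}_i$, so the truncated sorted vectors satisfy the same coordinatewise inequality and $r$, being non-decreasing in each variable, yields $g(u)\le g(u')$.

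The substantive task is concavity: for $u,v\in T$ and $\lambda\in[0,1]$ I must show $g(\lambda u+(1-\lambda)v)\ge \lambda g(u)+(1-\lambda)g(v)$. Put $p=(u^{\uparrow}_1,\dots,u^{\uparrow}_n)$, $q=(v^{\uparrow}_1,\dots,v^{\uparrow}_n)$, $\tilde p=\lambda p+(1-\lambda)q$, and $w=\bigl((\lambda u+(1-\lambda)v)^{\uparrow}_1,\dots,(\lambda u+(1-\lambda)v)^{\uparrow}_n\bigr)$, all lying in $(0,1]^{n}$. The vector $\tilde p$ is again increasingly sorted, being a convex combination of two increasingly sorted vectors, so that $g(\lambda u+(1-\lambda)v)=r(w)$ while, by concavity of $r$ on $(0,1]^{n}$, $\lambda g(u)+(1-\lambda)g(v)=\lambda r(p)+(1-\lambda)r(q)\le r(\tilde p)$. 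It therefore suffices to establish the single inequality $r(w)\ge r(\tilde p)$.

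The link between $w$ and $\tilde p$ is that partial sums of smallest coordinates are concave in the data. For each $k\le n$,
\[ S_k(\lambda u+(1-\lambda)v)=\min_{|F|=k}\sum_{i\in F}\bigl(\lambda u_i+(1-\lambda)v_i\bigr)\ \ge\ \lambda\min_{|F|=k}\sum_{i\in F}u_i+(1-\lambda)\min_{|F|=k}\sum_{i\in F}v_i, \]
and, rewriting the endpoints through the truncated vectors, this gives $S_k(w)\ge \lambda S_k(p)+(1-\lambda)S_k(q)=S_k(\tilde p)$ for all $1\le k\le n$. Thus $w$ and $\tilde p$ are two increasingly sorted vectors in $(0,1]^{n}$ with $S_k(\tilde p)\le S_k(w)$ for every $k$; equivalently, $w$ weakly supermajorizes $\tilde p$.

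The heart of the argument, and the step I expect to be the main obstacle, is to deduce $r(w)\ge r(\tilde p)$ from this partial-sum domination, where all three hypotheses on $r$ are used together: symmetry with concavity makes $r$ Schur-concave, and monotonicity absorbs the surplus mass $S_n(w)-S_n(\tilde p)\ge 0$. Concretely, I would build a finite chain from $\tilde p$ to $w$ out of two elementary moves, neither of which decreases $r$: raising a single coordinate (permissible by monotonicity), and a Robin Hood transfer $(a_i,a_j)\mapsto(a_i+\delta,a_j-\delta)$ with $a_i\le a_j$ that shifts the vector toward equality (permissible by Schur-concavity). That some such chain realises any weak-supermajorization relation is the monotone form of the Hardy--Littlewood--P\'olya theorem, which may be quoted from Marshall--Olkin; the only delicate point is to keep every intermediate vector inside $[0,1]^{n}$, where the concave $r$ extends continuously from $(0,1]^{n}$, so that each move is legitimate. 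Granting this, $r(w)\ge r(\tilde p)\ge \lambda g(u)+(1-\lambda)g(v)$, proving concavity. I would close by noting that the truncation to the $n$ smallest coordinates and the padding by $1$'s cause no trouble, since for $k\le n$ each $S_k$ depends only on the smallest $k$ entries and so all the identities between $S_k(u)$ and $S_k(p)$ (and their analogues for $v,q$ and for the combination) hold verbatim; the cases $n=1$ with $r(x)=x$ and $r=\min$ then collapse $g$ to the ordinary min-function.
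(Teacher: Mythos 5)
Your proposal is correct, but it takes a genuinely different and heavier route than the paper. The paper never compares the two sorted vectors $w$ and $\tilde p$ at all: it sorts only the convex combination $\lambda u+(1-\lambda)v$, writes $g(\lambda u+(1-\lambda)v)=r\bigl(\lambda u_{(1)}+(1-\lambda)v_{(1)},\ldots,\lambda u_{(n)}+(1-\lambda)v_{(n)}\bigr)$ where $(i)$ is the permutation sorting the \emph{combination}, applies concavity of $r$ directly to the (generally unsorted) restrictions $\bigl(u_{(1)},\ldots,u_{(n)}\bigr)$ and $\bigl(v_{(1)},\ldots,v_{(n)}\bigr)$, and then finishes with the elementary observation that the order statistics of any $n$-element subset of coordinates dominate, entrywise, the $n$ smallest coordinates overall — so symmetry plus coordinatewise monotonicity give $r\bigl(u_{(1)},\ldots,u_{(n)}\bigr)\ge r\bigl(u_{((1))},\ldots,u_{((n))}\bigr)=g(u)$, and likewise for $v$. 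This clever choice of permutation circumvents precisely the step you identify as "the heart of the argument": your reduction forces a comparison $r(w)\ge r(\tilde p)$ between two sorted vectors related only by partial-sum domination, which then requires Schur-concavity and the monotone Hardy–Littlewood–Pólya/Marshall–Olkin theorem (weak supermajorization). Your route is sound — the partial-sum inequality via $S_k(u)=\min_{|F|=k}\sum_{i\in F}u_i$ is correct, and symmetric $+$ concave $+$ non-decreasing does preserve the relevant weak order — and it has the merit of making the Schur-concavity structure explicit and quotable. Two small remarks: the worry about extending $r$ to $[0,1]^n$ is unnecessary, since in the decomposition $z\prec\tilde p$, $z\le w$ all coordinates of $z$ (and of the T-transform intermediates) lie in $[\min\tilde p,\max\tilde p]\subset(0,1]$, so everything happens inside the original domain; and the claimed continuous extension of a concave function to the closed cube is not automatic in general, so it is fortunate you do not actually need it. Net comparison: your argument is modular and cites a standard theory; the paper's is shorter, self-contained, and avoids majorization entirely.
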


If $f$ is obtained from some $\varphi$ as described above and $r$ satisfying the theorem above, we call it a \textit{generalized min-function}.

\begin{proof}
	$g$ is non-negative, symmetric and non-decreasing by construction. It remains to check the concavity. We notice that 
	\begin{align*}
		& g(\lambda u + (1-\lambda)v) = g(\lambda u_1+(1-\lambda)v_1, \ldots, \lambda u_n+(1-\lambda)v_n, \ldots) = \\ 
		& = g(\lambda u_{(1)}+(1-\lambda)v_{(1)}, \ldots, \lambda u_{(n)}+(1-\lambda)v_{(n)}, \ldots) = \\
		& = r(\lambda u_{(1)}+(1-\lambda)v_{(1)}, \ldots, \lambda u_{(n)}+(1-\lambda)v_{(n)}) \ge \\ & \ge \lambda r(u_{(1)}, \ldots, u_{(n)}) + (1-\lambda) r(v_{(1)}, \ldots, v_{(n)}) \ge \\
		& \ge \lambda r(u_{((1))}, \ldots, u_{((n))}) + (1-\lambda) r(v_{((1))}, \ldots, v_{((n))}) = \lambda g(u) + (1-\lambda) g(v).	
	\end{align*} 
	
	The indices are sorted so that $u_{((i))}$, $v_{((i))}$ and $\lambda u_{(i)}+(1-\lambda)v_{(i)}$ are all non-decreasing permutations of $u_i$, $v_i$ and $\lambda u_{(i)}+(1-\lambda)v_{(i)}$ respectively. This proves that $g$ is concave.
\end{proof}

\begin{proposition}
	The function $r$ is continuous.
\end{proposition}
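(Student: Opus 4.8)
The plan is to split the domain $(0,1]^n$ into its open interior $(0,1)^n$ and the boundary faces where at least one coordinate equals $1$, since these faces are the only boundary points of the cube that actually belong to the domain. On the interior the statement is immediate, so the whole substance of the proposition lives on the top faces, and the key realization is that continuity there is exactly where the non-decreasing hypothesis (rather than concavity alone) does the work.

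First I would invoke the classical fact that a finite concave function on a convex set is continuous on the relative interior of that set. Since $(0,1]^n$ is full-dimensional, its relative interior is the open cube $(0,1)^n$, so $r$ is automatically continuous (in fact locally Lipschitz) there. This disposes of every point $\bar x$ all of whose coordinates are strictly less than $1$.

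Next I would reduce the boundary behaviour to a one-variable computation. A concave function restricted to a line segment is concave, so it suffices to understand an increasing concave $\psi$ on $(0,1]$ near the right endpoint. For $s<t<1$, writing $t = \frac{1-t}{1-s}\,s + \frac{t-s}{1-s}\cdot 1$, concavity gives $\psi(t) \ge \frac{1-t}{1-s}\psi(s) + \frac{t-s}{1-s}\psi(1)$, and letting $t\to 1^-$ yields $\lim_{t\to 1^-}\psi(t) \ge \psi(1)$; the monotonicity gives the reverse inequality $\psi(t)\le\psi(1)$. Hence $\psi$ has no jump at the top endpoint. This is precisely the step that forces the non-decreasing assumption into the argument: concavity by itself permits a downward jump at $1$ (a function constant on $(0,1)$ that drops at $1$ is concave but not monotone), and it is monotonicity that rules this out.

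Finally I would upgrade this one-coordinate statement to joint continuity at a boundary point $\bar x$ by monotone sandwiching. Set $I=\{i:\bar x_i=1\}$, and for $x$ near $\bar x$ let $x^{+}=x\vee\bar x$ and $x^{-}=x\wedge\bar x$ be the coordinatewise maximum and minimum; for small perturbations both lie in $(0,1]^n$, and monotonicity gives $r(x^{-})\le r(x)\le r(x^{+})$. The upper bound is easy: $x^{+}$ keeps the $I$-coordinates pinned at $1$ and moves the others within $(0,1)$, so $r(x^{+})\to r(\bar x)$ by interior continuity on the lower-dimensional open face. The lower bound is the delicate one: $x^{-}$ approaches $\bar x$ with every $I$-coordinate rising to $1$ from below simultaneously, and here I would feed the one-variable no-jump lemma into an induction on $|I|$, peeling off one saturated coordinate at a time and using interior continuity on each successive face. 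Squeezing then yields $r(x)\to r(\bar x)$. I expect this $x^{-}$ limit — controlling the concave function as several coordinates tend to $1$ together — to be the main obstacle, and the whole argument to hinge on the interplay of concavity (no upward escape at the endpoint) with monotonicity (no downward jump).
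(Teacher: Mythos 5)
Your proposal is correct, and at bottom it rests on the same two pillars as the paper's proof: monotonicity, used to sandwich an arbitrary perturbation between comparable points, and a one-variable slope/endpoint computation showing that a bounded, non-decreasing, concave function of one variable cannot jump. But the organization is genuinely different, and the comparison is instructive. The paper argues by contradiction through a single diagonal restriction $R(\varepsilon)=r(x_1+\varepsilon,\ldots,x_n+\varepsilon)$, asserts that $R$ is defined on an open interval around $0$, and proves continuity of $R$ at $0$ by two slope estimates; the monotone sandwich (this is where the non-decreasing hypothesis enters for the paper) shows a discontinuity of $r$ would force a discontinuity of $R$. You instead split the cube into its interior and the faces where some coordinates equal $1$, invoke the classical relative-interior continuity of concave functions, and handle the faces with the sandwich $r(x\wedge\bar x)\le r(x)\le r(x\vee\bar x)$. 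Your extra care is not wasted: at a point with some coordinate equal to $1$, the paper's $R$ is \emph{not} defined on an open interval around $0$ (only on a left half-interval), so the paper's proof as written silently skips exactly the boundary case that you treat via $x\vee\bar x$ and continuity on the relatively open face; that case is the one that matters for the later extension of $r$ to $[0,1]^m$. Conversely, your one delicate step --- showing $r(x\wedge\bar x)\to r(\bar x)$ as several saturated coordinates rise to $1$ simultaneously --- needs no induction on $|I|$, and the naive peeling argument would in any case require the one-variable lemma to hold uniformly in the moving coordinates. Both issues evaporate if you borrow the paper's diagonal on the side where it is available: with $\varepsilon=\max_i|x_i-\bar x_i|$ one has $x\wedge\bar x\ge\bar x-\varepsilon\mathbf{1}$ coordinatewise, and the map $\varepsilon\mapsto r(\bar x-\varepsilon\mathbf{1})$ is concave and non-increasing in $\varepsilon$, hence continuous at $\varepsilon=0$ by your own endpoint lemma, which gives the lower bound in one line.
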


\begin{proof}
	Since $0 \le r(x_1, \ldots, x_n) \le r(1, \ldots, 1)$, $r$ is bounded.
	
	Assume the converse. Let $(x_1, \ldots, x_n)$ be a point of discontinuity and put $t = r(x_1, \ldots, x_n)$. Denote $R(\varepsilon)=r(x_1+\varepsilon, \ldots, x_n+\varepsilon)$ for all values $\varepsilon$ where it is well-defined, including some open interval around $0$. The assumption and the fact that $r$ is non-decreasing imply that $R$ is discontinuous at $0$. On the other hand, $R$ is bounded, non-decreasing and concave. Let $R(-a)=t-b \le 0$, where $-a$ is an arbitrary negative number for which $R$ is defined. Now it follows that $t \le R(z) \le t+\frac{zb}{a}$ for $z \ge 0$ and $t-\frac{zb}{a} \le R(z) \le t$ for $z \le 0$, meaning that $R$ is continuous at $0$. Contradiction.
\end{proof}

\begin{theorem}
	If $f$ is a generalized min-function in the sense of Lemma \ref{ext}, then $f$ has an approximation in the strong sense, that is, there is a sequence $\{E_n\}$ such that for any $s \in \pfxf$ we have $\frac{f(sE_n)}{f(E_n)} \rightarrow 1$ as $n \rightarrow \infty$.
\end{theorem}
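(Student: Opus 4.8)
The plan is to recycle the template of Theorems \ref{first} and \ref{second}: place a fixed number of points far out on hairs rooted at vertices where $\varphi$ is tiny, and control everything through the concavity of $\varphi$ along hairs (Propositions \ref{concave}--\ref{est}). The one genuinely new feature is that the outer function $r$ is an arbitrary symmetric concave function of some arity $k$ rather than a plain minimum, so before anything else I would record how $r$ reacts to a \emph{multiplicative} perturbation of its arguments. As a preliminary normalization I would reduce to $\inf_X\varphi = 0$. If $\varphi$ is constant the claim is trivial; otherwise put $c = \inf_X\varphi \in [0,1)$ and replace $\varphi$ by $\hat\varphi = (\varphi - c)/(1-c)$ and $r$ by $\hat r(x_1,\ldots,x_k) = r((1-c)x_1+c,\ldots,(1-c)x_k+c)$. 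Since $x \mapsto (1-c)x+c$ is an increasing affine bijection of $(0,1]$ onto $(c,1]$, the function $\hat r$ is again symmetric, non-negative, non-decreasing and concave, the padding value $1$ is preserved (as $\hat\varphi(p)=1$), and the generalized min-function built from $\hat\varphi,\hat r$ coincides with $f$. Thus I may assume $\varphi(p)=1$, $\inf_X\varphi=0$, and, $\varphi$ being non-constant, that $\varphi>0$ attains no minimum, so $\min_{\ball}\varphi>0$ for every $n$.

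I would then establish two scaling inequalities for $r$, both immediate from concavity together with $r\ge 0$ (extending $r$ continuously to the closed cube, using the continuity proven above, so that $r(0)\ge 0$): for $w\in(0,1]^k$ one has $r(\lambda w)\ge \lambda r(w)$ for $\lambda\in[0,1]$, and $r(\mu w)\le \mu r(w)$ for $\mu\ge 1$ with $\mu w\in(0,1]^k$. The same comparison with $\lambda\mathbf{1}\le w$ shows $r(w)>0$ whenever every coordinate of $w$ is positive, so $f>0$ and the ratios make sense. I would also invoke the standard monotonicity of order statistics: if $v\le w$ coordinatewise, then the non-decreasing rearrangements satisfy $v_{(i)}\le w_{(i)}$ for all $i$.

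The construction then mirrors Theorem \ref{second}. Fix $n\ge 4$, set $\varepsilon = \min_{\ball}\varphi>0$, choose $k$ distinct skeleton vertices $z_1,\ldots,z_k$ lying outside $\ball$ with $\varphi(z_i)<\varepsilon/(4n^2)$ (possible since $\inf_X\varphi=0$), and put $y_i=\hair{z_i}{n^2}$, $E_n=\{y_1,\ldots,y_k\}$. For a word $g\in\pfxf$ of length at most $n$ I would write $gE_n = E_g\,\Delta\,\bar g E_n$, where $\bar g\in F$ is the image of $g$ in $F$ and $E_g\subseteq\ball$ is its lamp configuration. The shifted points $\bar g E_n=\{\hair{z_i}{n^2+s_i}\}$ with $|s_i|\le n$ stay deep on their hairs, remain distinct (as $\bar g$ is a bijection of $X$), and lie outside $\ball$, so the union is disjoint; moreover Proposition \ref{est} gives $\varphi(\hair{z_i}{n^2+s_i})<\varepsilon$, so these $k$ points carry the $k$ smallest values of $gE_n$ while every vertex of $E_g$ has value $\ge\varepsilon$. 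Hence $f(E_n)=r(\tilde u)$ and $f(gE_n)=r(\tilde v)$, where $\tilde u,\tilde v$ are the sorted tuples of the values at the $y_i$ and at the $\hair{z_i}{n^2+s_i}$. The concavity estimate of Theorem \ref{first} yields $(1-\tfrac1n)u_i\le v_i\le(1+\tfrac1n)u_i$ coordinatewise, order-statistic monotonicity promotes this to $(1-\tfrac1n)\tilde u\le\tilde v\le(1+\tfrac1n)\tilde u$, and the two scaling inequalities give $(1-\tfrac1n)f(E_n)\le f(gE_n)\le(1+\tfrac1n)f(E_n)$. Thus $\left|f(gE_n)/f(E_n)-1\right|\le 1/n$ for every word of length $\le n$, which is the strong approximation by Corollary \ref{north-equiv}.

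The heart of the matter, and the step I expect to require the most care, is passing from multiplicative closeness of the \emph{inputs} of $r$ to multiplicative closeness of its \emph{output}: for ordinary min-functions this was automatic because $f$ was literally one of its arguments, whereas here it is supplied by the two scaling inequalities, which hold precisely because $r$ is concave and non-negative. The reduction to $\inf_X\varphi=0$ is the other delicate point, since it is exactly what allows the deep points to undercut the ball-minimum $\varepsilon$ and thereby renders the switch generator $\sigma$ harmless.
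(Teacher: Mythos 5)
Your proof is correct, and it takes a genuinely different route from the paper's. The paper splits into two cases according to whether $X$ has at least $m$ hairs on which $\sup\varphi<\varepsilon$ (where $m$ is the arity of $r$, your $k$): in the first case it places one point deep on each of $m$ such hairs; in the second it invokes the uniform-continuity extension of $r$ to $[0,1]^m$ to produce an \emph{additive} window, i.e. a tuple $(y_1,\ldots,y_m)$ and $\alpha>0$ with $0<y_i<\varepsilon-\alpha$ and $r(y_1+\alpha,\ldots,y_m+\alpha)/r(y_1,\ldots,y_m)<1+\tfrac1n$, and then uses the bound on hair increments to land the points of $E_n$ in the intervals $(y_i+\alpha/3,y_i+2\alpha/3)$, so that $f(E_n)$ and $f(sE_n)$ both lie between $r(y_1,\ldots,y_m)$ and $r(y_1+\alpha,\ldots,y_m+\alpha)$. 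You avoid the case split entirely: your construction is verbatim that of Theorems \ref{first} and \ref{second} (points at depth $n^2$ on hairs over skeleton vertices with $\varphi(z_i)<\varepsilon/(4n^2)$, with Proposition \ref{est} keeping every relevant value below $\varepsilon$), and you use the continuous extension of $r$ only to make sense of $r(0)$, replacing the paper's quantitative use of continuity by the scaling inequalities $r(\lambda w)\ge\lambda r(w)$ ($\lambda\in[0,1]$) and $r(\mu w)\le\mu r(w)$ ($\mu\ge1$) — multiplicative rather than additive control of the outer function. This buys two real improvements. First, it supplies exactly the step the paper leaves implicit in its first case: there, coordinatewise multiplicative closeness of the hair values is claimed to give $|f(sE_n)/f(E_n)-1|<\tfrac1n$ ``from the concavity properties of $\varphi$'', but the passage from a $(1\pm\tfrac1n)$-perturbation of the \emph{inputs} of $r$ to a $(1\pm\tfrac1n)$-perturbation of its \emph{output} is precisely your scaling lemma, which appears nowhere in the paper. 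Second, your reduction to $\inf_X\varphi=0$ is done honestly: for generalized min-functions, subtracting a constant from $\varphi$ does not simply shift $f$, so one must simultaneously conjugate $r$ by the affine map $x\mapsto(1-c)x+c$, as you do; the paper's ``as before, without loss of generality'' imports the plain min-function reduction, which does not apply verbatim. What the paper's route offers in exchange is modest: its second case needs only monotonicity and continuity of $r$ (not concavity), and its explicit interval-landing technique is of some independent interest. Two small points you should still record: the existence of $k$ \emph{distinct} low-value skeleton vertices (choose them with strictly decreasing values, replacing a low point on a hair by its base via Proposition \ref{nondecr}), and the degenerate case $r\equiv0$, where $f\equiv0$ and the ratios are undefined (excluded if $\mathbb{R}^+$ is read as strictly positive); finally, your use of $r(\mu w)\le\mu r(w)$ needs $(1+\tfrac1n)\tilde u\in(0,1]^k$, which does follow from your estimates since $(1+\tfrac1n)(3n^2+1)/(4n^2)<1$ for $n\ge4$, but deserves a line.
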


\begin{proof}
	Let $r = r(x_1, \ldots, x_m)$ and $\varphi$ be the function from the definition of a generalized min-function. As before, without loss of generality we assume $\inf \varphi = 0$.
	
	The properties of $r$ imply that it is also uniformly continuous. By the uniform continuity theorem for metric spaces, we can continuously extend it to a function defined on the closed hypercube $[0,1]^m$, which for convenience we will also call $r$. 
	
	Now, fix $n \in \mathbb{N}$ in order to construct a set $E_n$ satisfying $|\frac{f(sE_n)}{f(E_n)} - 1| < \frac{1}{n}$. The idea is to construct $E_n$ as a set of $m$ points located on hairs away from $p$ and at a distance $>n$ from the binary skeleton. In this case $sE_n$ will consist of $m$ points located on the same hairs and, possibly, some points at a distance $\le n$ from $p$.
	
	Let $\varepsilon = \min\limits_{q \in \mathcal{B}_n} \varphi(q)$, where $\mathcal{B}_n$ is defined as in the proof of Theorem \ref{second}. Then our goal is to take $E_n$ so deep in the tree that $\varphi(q)<\varepsilon$ for all $q \in sE_n$.
	
	There are two possible cases which define the way we are going to construct the approximation. The first case is when $X$ has at least $m$ hairs where the supremum of $\varphi$ is less than $\varepsilon$; the second case is when it has at most $m-1$ hairs with this property. 
	
	
	\textbf{Case 1.} $X$ has at least $m$ hairs where the supremum of $\varphi$ is less than $\varepsilon$.
	
	In this case, we are constructing $E_n$ as a set consisting of $m$ points, one on each hair. The idea is to choose them far enough from the skeleton so the value of $\varphi$ wouldn't change much by perturbing the points.
	
	Denote $y_i = \sup\limits_{x \in H_i} \varphi(x)$, where $H_i$ is the corresponding hair for $i=\overline{1,m}$ satisfying the above property. Let $z_i \in H_i$ be such a point that $\varphi(z_i) \ge \frac{n}{n-1} y_1$. Then it follows from the concavity properties of $\varphi$ on hairs that the set $E_n = \{z_1^{(+n)}, \ldots, z_m^{(+n)}\}$ satisfies $|\frac{f(sE_n)}{f(E_n)} - 1| < \frac{1}{n}$.

	\textbf{Case 2.}  
	We make the following claim which directly follows from the continuity of $r$:
	
	There is a number $\alpha>0$ and a tuple $(y_1, \ldots, y_m)$ such that:
	\begin{itemize}
		\item $0 < y_i < \varepsilon - \alpha$ for all $i$
		\item $\frac{r(y_1+\alpha, \ldots, y_n+\alpha)}{r(y_1, \ldots, y_n)} < 1+\frac{1}{n}$
	\end{itemize}

	If $X$ has only finitely many hairs where the supremum of $\varphi$ is less than $\varepsilon$, we can pick $m$ points $\{ z_1, \ldots, z_m \} \subset X_sk$ such that $\varphi(z_i)<\frac{\alpha}{10n}$ and $\lim\limits_{l \rightarrow \infty} \varphi(z_i^{+l}) > \varepsilon$. According to the proof of Theorem \ref{first}, $0 \le z_i^{(k+1)}-z_i^{(k)} < \frac{\alpha}{3n}$. Hence, we can pick points $ z_1^{(k_1)}, \ldots, z_m^{(k_m)}$ such that $z_i^{(k_i)} \in (y_i+\frac{\alpha}{3} , y_i + \frac{2\alpha}{3})$. After applying the word $s$, $z_i^{(k_i)}$ moves to $z_i^{(k_i+q)}$, with $-n \le q \le n$. This implies that $z_i^{(k_i+q)} \in (y_i, y_i+\alpha)$ and $f(sE_n) \in (r(y_1, \ldots, y_n), r(y_1+\alpha, \ldots, y_n+\alpha))$, however, by construction $f(E_n)$ lies in the same range. As a conclusion, a simple calculation shows that $|\frac{f(sE_n)}{f(E_n)} - 1| < \frac{1}{n}$.
	
\end{proof}

\section{Possible further applications}

The most important open question related to the topic is whether the approximation in a strong sense exists for Green's function $G(E,\varnothing|z)$ for all values of $z$ where it is defined. If Thompson's group is not amenable, we know that:

\begin{itemize}
	\item $G(E,\varnothing|z)$ is defined for some $z=\frac{1}{r}>1$, and 
	\item $G(E,\varnothing|z)$ has no approximation for any $z>1$ as evidenced by the proof of Theorem \ref{north}.
\end{itemize}

If it is amenable, then $G(E,\varnothing|z)$ exists only for $z \le 1$ and, by Northshield's criterion, has an approximation in a strong sense. 

However, the structure of the graph seems to be too complex to explicitly calculate Green's function even in the case $z=1$. Even for graphs with relatively simple structure, for instance, the standard Cayley graph of $\mathbb{Z}^n$ for $n \ge 3$, there is no elementary formula for Green's function.

Another open question is whether such approximations exist for well-defined countable sums of $P^{n_i}f_i$-like expressions from Theorem \ref{fourth}. Since one of the central points of the proof for finite sums was finding the maximum $n_i$, this approach does not work when the sum is infinite. A notable subclass of this class is the set of potentials $Gf(E|z) = \sum\limits_{n=0}^{\infty} P^nf(E) z^n = \sum\limits_F G(E,F|z)f(F) $ for $z<1$ and any bounded positive min-function $f$. They are always well-defined and the existence of $E_n$-approximations in the strong sense for them has not been proved or disproved.


	\section{Free group action is amenable but not extensively amenable}\label{almost-f2}
	
	Consider the following Schreier graph $Z$:
	
	First, we take the right Cayley graph of the free group $\mathbb{F}_2$ generated by $a$ and $b$. Then we cut the edge connecting $e$ and $a$ and replace the part containing $a$ with an infinite tail isomorphic to $\mathbb{Z}^+$ where $a$ acts by moving one edge away from $e$ and $b$ acts trivially.
	
	Define the following superharmonic function $\varphi$ on $Z$:
	
	\begin{itemize}
		\item $\varphi(x) = 1$ if $x$ is on the hair;
		\item $\varphi(x) = 3^{-|x|}$ if $x$ is outside of the hair at a distance $|x|$ from $e$.
	\end{itemize}
	
	\begin{figure}[h]
		\centering
		\includegraphics[width=11cm]{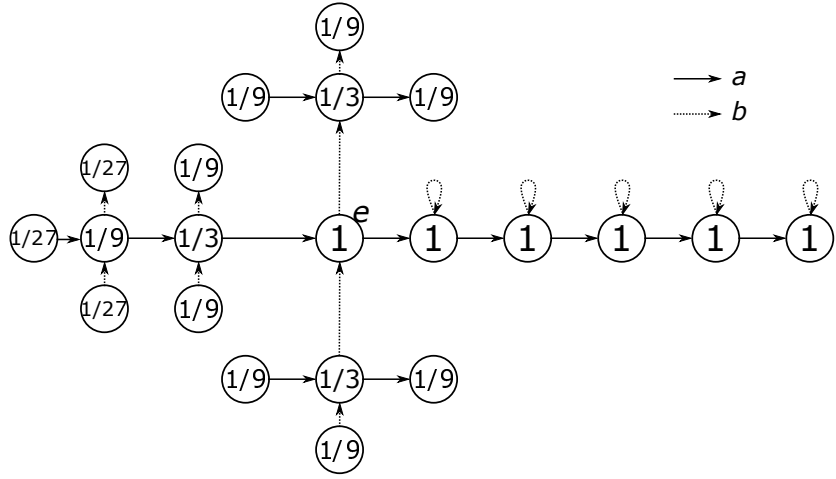}
		\caption {Fragment of graph $Z$ with values of $\varphi$}
	\end{figure}
	
	Now define the corresponding superharmonic min-function $f$ on $\mathcal{P}_f(Z)$:
	
	\begin{itemize}
		\item $f(E) = \min\limits_{x \in E} \varphi(x), \ E \neq \varnothing$;
		\item $f (\varnothing) = 1$.
	\end{itemize}
	
	Then there is no sequence $E_n \in \mathcal{P}_f(X)$ such that for any word $g$ the sequence $\frac{f(gE_n)}{f(E_n)}$ converges to 1.	
	
	\begin{proof}
		Assume such a sequence exists. As before, we can assume $|\frac{f(E_ng)}{f(E_n)}-1|<\frac{1}{n}$ Consider two cases:
		
		$\mathbf{Case \ 1}.$ $\exists N \in \mathbb{N} \ \forall n \ge N: f(E_n)=1.$
		
		This means that starting from some moment all $E_n$ are subsets of the tail. Take $g=pb$, where $p$ is the generator corresponding to the switch at $e$ and $b$ multiplies all elements by $b$ (here we are assuming the right notation, i.e. when applying $g=pb$ we apply $p$ first and then $b$). Then $f(E_ng)=\varphi(b)=\frac{1}{3}$, which contradict the assumption.
		
		$\mathbf{Case \ 2}.$ For infinitely many $n$, $E_n$ is not a subset of the hair, i.e. $f(E_n)<1$.
		
		Let $x_n$ be the element of $E_n$ minimizing $\varphi$ (if there are several, pick any). There are infinitely many $x_n$ terminating in the same letter, say, $b$. Then by taking $g$ to be any letter different from $b^{-1}$ we have that $f(E_ng) = \varphi(xg) = -3^{|x|+1} = \frac{1}{3} \varphi(x) = \frac{1}{3}f(E_n)$ for infinitely many $n$, which prevents the convergence of $\frac{f(gE_n)}{f(E_n)}$ to 1.
		
		In both cases we arrived at a contradiction.
	\end{proof}
	
	We have used Theorem \ref{north} to prove that the action of the free group on $Z$ is not extensively amenable by showing that there is no approximation in a strong sense. However, the action is clearly amenable since we can take arbitrarily long portions of the tail to be the F\o lner sets. This gives us another example of a non-extensively amenable group action.
	
	\section {Some facts about random walks on X}
		Let the Schreier graph $X$ and the point $p$ be defined as above. We can consider a left simple random walk on $X$ given by the measure $\mu(a)=\mu(b)=\mu(a^{-1})=\mu(b^{-1})=\frac{1}{4}$. This random walk is transient and has spectral radius $1$ due to the graph's amenability (more on spectral radii can be found in \cite{woess}). In this section we collect some useful lemmas which we will need in order to show some other results.
		
		\begin{lemma} \label{gpp4}
			The Green's function of the random walk above satisfies $G(p,p|1)=4$.
		\end{lemma}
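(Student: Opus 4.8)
The plan is to reduce the computation of the Green's function at $z=1$ to a first-return probability via the standard renewal identity $G(p,p|1) = (1 - F(p,p|1))^{-1}$, where $F(p,p|1)$ denotes the probability that the walk started at $p$ ever returns to $p$. Thus it suffices to show $F(p,p|1) = \frac{3}{4}$. Since $p$ is the root, it has exactly four neighbors --- the first vertices of its two hairs and its two tree-children --- each reached in one step with probability $\frac{1}{4}$. Writing $F_H$ for the probability of hitting $p$ started from a hair-neighbor and $F_T$ for the probability of hitting $p$ started from a child, the symmetry of the two hairs and of the two isomorphic subtrees gives $F(p,p|1) = \frac{1}{2}(F_H + F_T)$.

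For the hair term, I would observe that a walk entering a hair is trapped on that hair (a half-line ending at $p$) until it returns to $p$, and that by Proposition \ref{concave} its motion there is the lazy nearest-neighbor walk on $\mathbb{Z}_{\ge 0}$ with up/down steps of probability $\frac{1}{4}$ each and holding probability $\frac{1}{2}$. Laziness does not affect recurrence, so this one-dimensional walk hits $0 = p$ with probability one; hence $F_H = 1$. For the tree term, the key reduction is that every excursion into a hair returns to its base vertex with probability one, so hairs only delay the walk and do not alter any hitting probability on the skeleton. Integrating them out, from each non-root skeleton vertex the effective transition to each of its three tree-neighbors (parent and two children) has probability $\frac{1/4}{3/4} = \frac{1}{3}$; that is, the induced skeleton walk is exactly the simple random walk on the infinite binary tree.

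Using self-similarity --- from a child $c_1$ of $c$ one must first hit $c$ and then $p$, so the hitting probability of $p$ from $c_1$ equals $F_T^2$ --- I obtain the recursion $F_T = \frac{1}{3} + \frac{2}{3} F_T^2$, i.e. $2F_T^2 - 3F_T + 1 = 0$, whose roots are $1$ and $\frac{1}{2}$. Since the simple random walk on the binary tree is transient, the correct non-trivial root is $F_T = \frac{1}{2}$. Substituting back, $F(p,p|1) = \frac{1}{2}\left(1 + \frac{1}{2}\right) = \frac{3}{4}$, and therefore $G(p,p|1) = \left(1-\frac{3}{4}\right)^{-1} = 4$.

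The steps requiring the most care are the two reductions that justify integrating out the hairs: that a hair walk returns to its base with probability one (so it contributes neither to escape nor to the branch eventually selected on the skeleton), and that the resulting induced chain on the skeleton is genuinely the simple random walk on the binary tree. The only other subtle point is the selection of the transient root $F_T = \frac{1}{2}$ rather than $1$, which must be justified by invoking the transience of the binary-tree walk (equivalently, that $X$ supports escape to infinity) rather than extracted from the quadratic alone.
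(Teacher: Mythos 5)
Your proof is correct, and it takes a genuinely different route from the paper. You argue probabilistically: the renewal identity $G(p,p|1)=(1-F(p,p|1))^{-1}$, a first-step decomposition of the return probability over the four neighbours of $p$, elimination of the hairs (each hair excursion ends at its base almost surely, since the lazy nearest-neighbour walk on a half-line is recurrent), and the self-similar recursion $F_T=\tfrac{1}{3}+\tfrac{2}{3}F_T^2$, whose transient root $F_T=\tfrac{1}{2}$ gives $F(p,p|1)=\tfrac{1}{2}(1+\tfrac{1}{2})=\tfrac{3}{4}$ and hence $G(p,p|1)=4$. All the delicate points you flag are indeed the right ones, and they hold: hairs attach to the skeleton at a single vertex, so excursions into them cannot create shortcuts; the subgraphs hanging below a child $c$ of $p$ and below a grandchild $c_1$ (hairs included) are isomorphic, which legitimizes the strong-Markov factorization $F_T^2$; and $F_T=1$ is excluded because it would force almost-sure return to $p$, contradicting the transience of the walk on $X$ that the paper records at the start of the section. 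The paper's proof is entirely different: it exhibits the Green kernel explicitly. Writing $u(x)$ for the depth of the skeleton point nearest to $x$, the function $f(x)=2^{2-u(x)}$ satisfies $f-Pf=\delta_p$ (a direct check at $p$, at non-root skeleton vertices, and on hairs) and vanishes at the boundary, so the harmonic part of its Riesz decomposition is zero; thus $f$ is a potential, $f=G(\cdot,p|1)$, and $G(p,p|1)=f(p)=4$. The trade-off is clear: the paper's argument requires guessing the right function and invoking the Riesz decomposition, but it yields the entire column $G(\cdot,p|1)$ of the Green kernel --- your hitting probabilities are visible inside it, e.g. $G(c,p|1)=F_T\,G(p,p|1)=2=2^{2-1}$ for a child $c$ of $p$ --- whereas your argument needs no guesswork and only first-principles Markov chain theory, but produces the single value $G(p,p|1)$ and must separately justify discarding the root $F_T=1$.
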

		
		\begin{proof}
			For any point $x$, denote by $u(x)$ the distance between $p$ and the point of the binary tree closest to $x$ (i.e. $u(x)$ says how many levels down from $p$ the point is located). It is easy to check that the function $f(x)=2^{2-u(x)}$ satisfies $f-Pf=\delta_p$ and converges to $0$ on any Poisson boundary of the tree. Thus, the harmonic component in its Riesz decomposition (see Section \ref{potentials} for definitions) is $0$ and $f$ is a potential. From the fact that $f-Pf=\delta_p$ it follows that this potential is equal to Green's function $G(\cdot,p)$. It remains to notice that $f(p)=2^2=4$.
		\end{proof}

		\begin{lemma} \label{walkx1}
			Let $q_n$ be a simple random walk on $X$ as defined above starting at a point $q_0$ and let $\varphi$ be a bounded positive superharmonic function on $X$. Then there is a non-zero probability that $\lim\limits_{n \rightarrow \infty} \varphi(q_n) \le \varphi(q_0)$. 
		\end{lemma}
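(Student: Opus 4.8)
The plan is to exploit the fact that superharmonicity of $\varphi$ translates, along the trajectory of the random walk, into a supermartingale property. Let $\mathcal{F}_n$ be the filtration generated by $q_0, \ldots, q_n$. Since $\varphi$ is superharmonic with respect to the measure driving the walk, $\mathbb{E}[\varphi(q_{n+1}) \mid \mathcal{F}_n] = P\varphi(q_n) \le \varphi(q_n)$, so $M_n := \varphi(q_n)$ is a supermartingale. Because $\varphi$ is bounded and positive, $M_n$ takes values in $[0, \sup \varphi]$ and is in particular bounded in $L^1$. First I would invoke the martingale convergence theorem: a bounded supermartingale converges almost surely, so $L := \lim_{n \to \infty} \varphi(q_n)$ exists almost surely and is a bounded random variable. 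This also justifies the limit appearing in the statement.

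Next I would record the expectation bound coming from the supermartingale inequality. Iterating $\mathbb{E}[M_{n+1}] \le \mathbb{E}[M_n]$ and using that $q_0$ is deterministic yields $\mathbb{E}[\varphi(q_n)] \le \varphi(q_0)$ for every $n$. Since the $\varphi(q_n)$ are uniformly bounded and converge almost surely to $L$, the bounded convergence theorem gives $\mathbb{E}[L] = \lim_n \mathbb{E}[\varphi(q_n)] \le \varphi(q_0)$.

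Finally I would convert this expectation inequality into the desired positive-probability statement by contraposition. Suppose, for contradiction, that $\mathbb{P}(L \le \varphi(q_0)) = 0$, i.e. $L > \varphi(q_0)$ almost surely. Then $L - \varphi(q_0)$ is nonnegative and strictly positive almost surely, hence has strictly positive expectation, giving $\mathbb{E}[L] > \varphi(q_0)$ and contradicting the previous paragraph. Therefore $\mathbb{P}(L \le \varphi(q_0)) > 0$, which is exactly the claim. The argument is essentially routine once the supermartingale structure is identified, and in particular it uses no special feature of the graph $X$ beyond the hypotheses on $\varphi$. I expect the only point requiring care to be the last step: one must resist reading $\mathbb{E}[L] \le \varphi(q_0)$ as an almost-sure statement, and instead apply the elementary fact that a random variable strictly positive almost surely cannot have expectation at most its almost-sure strict lower bound. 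Verifying the hypotheses of the martingale and bounded convergence theorems—boundedness and $L^1$-boundedness of $\varphi(q_n)$—is the other, more modest, thing to check.
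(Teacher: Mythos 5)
Your proof is correct and follows essentially the same route as the paper: identify $\varphi(q_n)$ as a bounded supermartingale, apply the convergence theorem, bound $\mathbb{E}[\lim_n \varphi(q_n)]$ by $\varphi(q_0)$, and conclude. The only cosmetic difference is that you pass to the limit via the bounded convergence theorem where the paper uses Fatou's lemma (both are valid here since $\varphi$ is bounded), and you spell out the final contraposition step that the paper leaves as ``this immediately implies the statement.''
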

		
		\begin{proof}
			
			Since $\varphi$ is superharmonic, the sequence $\varphi(q_n)$ can be seen as a supermartingale. Hence, the expectation $\mathbb{E} \varphi(q_n)$ is non-increasing and $\mathbb{E} \varphi(q_n) \le \varphi(q_0)$ for all $n$. Then $\lim \limits_{n \rightarrow \infty}\varphi(q_n)$ exists almost surely by the supermartingale convergence theorem and $\mathbb{E}(\lim \limits_{n \rightarrow \infty}\varphi(q_n)) \le \varphi(q_0)$ by Fatou's lemma. This immediately implies the statement.
		\end{proof}
		
		\begin{lemma} \label{walkx2}
			Let $q_n$ be a simple random walk on $X$ as defined above such that $q_0=p$ and let $\varphi$ be a positive superharmonic function on $X$ such that $\varphi(q) \le t = \varphi(p) $ for any $q \in X$, and $\inf\limits_{x \in X} \varphi (x) = 0$. Then for any $\varepsilon > 0$  there is a non-zero probability that $\lim\limits_{n \rightarrow \infty} \varphi(q_n) < \varepsilon$. 
		\end{lemma}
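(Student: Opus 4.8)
The plan is to reduce the statement to Lemma \ref{walkx1} by first steering the walk, with positive probability, into a region where $\varphi$ is already below $\varepsilon$, and then invoking the near-monotone asymptotic behaviour of $\varphi$ along the walk that is guaranteed by that lemma. Since $\inf_{x \in X} \varphi(x) = 0$, I would begin by fixing a point $w \in X$ with $\varphi(w) < \varepsilon$; such a point exists by the definition of the infimum. Note that $\varphi$ is bounded, because $\varphi(q) \le t = \varphi(p)$ for all $q$, so $\varphi$ meets the hypotheses of Lemma \ref{walkx1}.

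Next I would use irreducibility to reach $w$. Because the measure $\mu(a)=\mu(b)=\mu(a^{-1})=\mu(b^{-1})=\frac{1}{4}$ is generating and the action of $F$ on $X$ is transitive (so $X$ is connected), the random walk is irreducible. Hence there is a finite time $m$ and a path of positive probability from $p$ to $w$, giving $P_m(p,w) > 0$, i.e. $\mathbb{P}(q_m = w) > 0$. (Periodicity of the walk on the tree is irrelevant here, since we only need \emph{some} $m$ with positive transition probability.)

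I would then glue these two facts together with the Markov property at the deterministic time $m$. Conditioned on the event $\{q_m = w\}$, the shifted process $(q_{m+k})_{k \ge 0}$ is a copy of the random walk started at $w$, so Lemma \ref{walkx1} applies to it and yields $\mathbb{P}_w\big(\lim_{k} \varphi(q_{m+k}) \le \varphi(w)\big) > 0$. The existence of this limit almost surely is itself the content of the supermartingale convergence step in Lemma \ref{walkx1}, and deleting the first $m$ terms does not change the value of a convergent sequence, so $\lim_k \varphi(q_{m+k}) = \lim_n \varphi(q_n)$. Combining,
\[ \mathbb{P}\big( \lim_{n \to \infty} \varphi(q_n) \le \varphi(w) \big) \ge \mathbb{P}(q_m = w)\cdot \mathbb{P}_w\big(\lim_{k \to \infty} \varphi(q_{m+k}) \le \varphi(w)\big) > 0, \]
and on this event $\lim_n \varphi(q_n) \le \varphi(w) < \varepsilon$, which is exactly the claim.

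The only delicate point I anticipate is the bookkeeping in this last step: one must transport the asymptotic event of Lemma \ref{walkx1} correctly across the finite reaching-path and check that the time shift by $m$ leaves the limit unchanged. Everything else — selecting a low-$\varphi$ point from $\inf \varphi = 0$, irreducibility of the walk, and the boundedness needed to apply Lemma \ref{walkx1} — is routine. I would remark that the argument in fact never uses that $p$ is the maximizer of $\varphi$; any starting point would do, so the hypothesis $q_0 = p$ is only there to match the intended downstream application.
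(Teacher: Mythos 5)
Your proof is correct, and it reaches the conclusion by a genuinely different (and cleaner) route than the paper. The paper also reduces to Lemma \ref{walkx1}, but by a different mechanism: it picks a low-$\varphi$ point $r$ on the binary skeleton, invokes transience to say that with positive probability the walk hits $r$ at some time $N$ and then remains forever in the subtree $X_r$ rooted at $r$, and finally re-runs the \emph{proof} of Lemma \ref{walkx1} on $X_r$ for the shifted walk $\tilde q_n = q_{N+n}$. Your route --- fix $w$ with $\varphi(w)<\varepsilon$, use connectivity of the Schreier graph to get $P_m(p,w)>0$ for some finite $m$, then apply Lemma \ref{walkx1} as a black box from $w$ via the Markov property at the deterministic time $m$ --- needs strictly less: no transience, no subtree confinement, and only the elementary fixed-time Markov property rather than conditioning on the infinite-future event $\{q_n \in X_r \ \text{for all} \ n \ge N\}$. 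That last point is a real advantage, not just a stylistic one: under the paper's conditioning the shifted process is no longer a simple random walk (it is a walk conditioned to stay in $X_r$), so the supermartingale step of Lemma \ref{walkx1} does not literally transfer to it and would require extra justification, whereas your fixed-time decomposition $\mathbb{P}_p(\lim_n \varphi(q_n) \le \varphi(w)) \ge P_m(p,w)\,\mathbb{P}_w(\lim_k \varphi(q_k) \le \varphi(w)) > 0$ is airtight. One small wording caveat: your closing remark that the argument ``never uses that $p$ is the maximizer of $\varphi$'' should be read as a statement about the starting point only --- you do use the bound $\varphi \le \varphi(p)$ to obtain the boundedness that Lemma \ref{walkx1} requires.
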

		\begin{proof}
			
			Let $r$ be a point on the binary skeleton of $X$ with $\varphi(r) < \varepsilon$ (it exists by the assumption on $\varphi$). Denote by $X_r$ the binary tree rooted in $r$. There is a non-zero probability that for some number $N$ we have that $q_n \in X_r, \ n \ge N$ and $q_N = r$. Applying the proof of Lemma \ref{walkx1} to $X_r$ and $\tilde{q}_n=q_{N+n}$, we get that $\lim \varphi(q_n) \le \varphi(r) < \varepsilon $ with positive probability.
		\end{proof}

	\section{Min-functions are potentials}\label{potentials}
	
	\begin{definition}
		Suppose that $X$ is a transient graph with a measure defining the Markov operator $P$. For a function $f : X \rightarrow \mathbb{R}$, we define its potential to be $g(x) = Gf(x) = \sum\limits_y G(x,y)f(y)$, where $G$ is the corresponding Green's function. 
	\end{definition}
	
	It was proven in \cite{woess} (Chapter 24) that any positive superharmonic function can be decomposed into the sum of a potential $g=Gf$ and a harmonic function $h \ge 0$. We prove that min-functions, and hence their finite and well-defined countable sums, are potentials.
	
	Another way to look at the potential is in terms of $P$: $g(x)=Gf(x) = \sum\limits_{n=0}^{\infty} P^n f(x)$. The definitions are clearly equivalent due to the definitions of the Green's function and Markov operator.
	
	\begin{theorem}
		Let $X$ be the Schreier graph of Thompson's group $F$ with $p$ being the root vertex. Let $\varphi$ be a positive superharmonic function on $X$ such that $\varphi(q) \le \varphi(p) $ for any $q \in X$ and $\inf\limits_{x \in X} \varphi (x) = 0$. As before, define the min-function $f : \mathcal{P}_f(X) \rightarrow \mathbb{R} $ by 
		\begin{itemize}
			\item $f(E) = \min\limits_{x \in E} \varphi(x), \ E \neq \varnothing$ 
			\item $f (\varnothing) = \varphi (p)$.
		\end{itemize}

		Then $f$ is a potential.
		
	\end{theorem}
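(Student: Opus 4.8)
The plan is to use the probabilistic characterization of potentials. Recall (Riesz decomposition, \cite{woess}, Ch.~24) that every positive superharmonic $f$ on the transient chain $(\mathcal{P}_f(X), P)$ splits as $f = Gf_0 + h$ with $f_0 = f - Pf \ge 0$ and $h \ge 0$ the greatest harmonic minorant, and that $h(E) = \lim_{n \to \infty} P^n f(E)$ pointwise. Hence $f$ is a potential if and only if $\lim_n P^n f \equiv 0$. Interpreting $P$ through the Lamplighter random walk $(E_n)_{n \ge 0}$ with $E_0 = E$, we have $P^n f(E) = \mathbb{E}_E[f(E_n)]$. Since $f$ is superharmonic, $f(E_n)$ is a supermartingale, and since $0 \le f \le \varphi(p)$ it is bounded; therefore $f(E_n) \to f_\infty$ almost surely, and by bounded convergence $\lim_n P^n f(E) = \mathbb{E}_E[f_\infty]$. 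So the theorem reduces to showing that $f_\infty = 0$ almost surely, for every starting configuration.

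Next I would exhibit the mechanism driving $f_\infty$ to $0$. Decompose the Lamplighter walk: at each step, with probability $4/5$ the whole configuration is shifted by a uniform $s \in \{a,b,a^{-1},b^{-1}\}$, and with probability $1/5$ the lamp at the root $p$ is toggled. The trajectory of a single lamp is then transparent: once a lamp sits at $p$ (because $p \in E$, or because a toggle has just lit one), its position evolves, on the shift-steps, as a simple random walk on $X$ started at $p$, and it can only be destroyed by a toggle occurring while it is located at $p$. The idea is to produce one immortal lamp whose $\varphi$-value tends below a prescribed threshold. Fix $\varepsilon \in (0, \varphi(p))$. Starting from any configuration $F$, with probability at least $1/5$ a lamp is present at $p$ after the first step. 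Conditioning on the subsequent shift-steps, Lemma~\ref{walkx2} gives positive probability that this lamp's trajectory enters a subtree $X_r$ with $\varphi(r) < \varepsilon$ and remains there with $\lim \varphi < \varepsilon$; since $r$ lies strictly below $p$, I would further intersect with the positive-probability (by transience of $X$) event that the lamp leaves $p$ immediately and never returns. On this event the lamp is never at $p$ again, hence never toggled, and survives forever, while $\varphi$ along it tends to a limit $< \varepsilon$; consequently $f(E_n) = \min_{x \in E_n}\varphi(x)$ is eventually $< \varepsilon$, so $f_\infty < \varepsilon$. Because the shift-steps are i.i.d.\ and independent of everything prior, this yields a lower bound $\mathbb{P}_F(f_\infty < \varepsilon) \ge c$ with a constant $c = c(\varepsilon) > 0$ independent of $F$.

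It then remains to boost this uniform positive probability to full probability. The event $A = \{f_\infty < \varepsilon\}$ is shift-invariant and $\mathcal{F}_\infty$-measurable, so by the Markov property $\mathbb{P}(A \mid \mathcal{F}_n) = \mathbb{P}_{E_n}(A) \ge c$ for all $n$; Lévy's zero–one law then forces $\mathbf{1}_A = \lim_n \mathbb{P}(A \mid \mathcal{F}_n) \ge c > 0$ almost surely, whence $\mathbb{P}(f_\infty < \varepsilon) = 1$. (Equivalently, one may run the lamp-creation argument at the infinitely many toggle times and conclude by Borel–Cantelli.) Letting $\varepsilon \downarrow 0$ gives $f_\infty = 0$ almost surely, so $\mathbb{E}_E[f_\infty] = 0$ for every $E$, the greatest harmonic minorant vanishes, and $f = Gf_0$ is a potential. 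The main obstacle is the survival step: one must cleanly disentangle the lamp's spatial trajectory (governed only by the shift-steps) from the toggle-steps, and verify that the event ``descend into a low-$\varphi$ subtree and never return to $p$'' — which merges the conclusion of Lemma~\ref{walkx2} with transience of the simple random walk on $X$ — indeed has probability bounded below uniformly in the starting configuration.
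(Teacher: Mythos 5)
Your proposal is correct, and its skeleton coincides with the paper's proof: both interpret the walk on $\mathcal{P}_f(X)$ as a system of lamps, both produce, for each $\varepsilon>0$, a surviving lamp whose $\varphi$-trajectory stays below $\varepsilon$ in the limit (via Lemma \ref{walkx2} together with the fact that a walk started at $p$ never returns there with probability $1/4$, which the paper extracts from Lemma \ref{gpp4}), both conclude $f(E_n)\to 0$ almost surely, and both finish through the Riesz decomposition. The genuine difference is the mechanism used to pass from positive probability to probability one. The paper iterates the Borel--Cantelli lemma three times (infinitely many toggles, infinitely many immortal lamps, some immortal lamp with small limiting $\varphi$); this is delicate because all lamps move in lockstep under the same shift letters, so the relevant events are far from independent and the divergence half of Borel--Cantelli does not apply verbatim. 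You instead prove a lower bound $\mathbb{P}_F(f_\infty<\varepsilon)\ge c(\varepsilon)>0$ uniform in the starting configuration $F$ and invoke L\'evy's zero--one law through the Markov property; this bypasses the independence issue altogether and is the more robust route. The price is exactly the step you flag: the event of Lemma \ref{walkx2} must be refined so that the tracked lamp leaves $p$ before any further toggle and never revisits $p$ (e.g.\ prescribe a direct path from $p$ to the low-$\varphi$ vertex $r$ and intersect it with the stay-in-$X_r$ event already used in that lemma's proof), so that ``never destroyed'' and ``small limit of $\varphi$'' hold on a single positive-probability event rather than being an intersection of two separately positive events. Finally, your closing step --- identifying the harmonic part as $h(E)=\lim_n P^nf(E)=\mathbb{E}_E[f_\infty]=0$ by bounded convergence --- is a streamlined, equivalent version of the paper's argument by contradiction (squeeze theorem plus dominated convergence against $\mathbb{E}\,h(E_n)=h(E_0)>0$).
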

	
	\begin{proof}
		
		We will prove the theorem by simulating a random walk $E_n \in \mathcal{P}_f(X)$ starting at an arbitrary subset $E_0$. The idea is to prove that $\lim\limits_{n \rightarrow \infty} f(E_n) = 0$ almost surely.

		We can regard the sequence $E_n$ from the following prospective: each $E_n$ is a subset of points (\objs) of $X$. Each time we make a move from $E_n$ to $E_{n+1}$, we either move each $\obj$  by a letter ($a$, $b$, $a^{-1}$ or $b^{-1}$), add an $\obj$ at $p$ to the set (if there is none) or remove an $\obj$ from $p$ from it (if there is one). 
		
		According to the Borel-Cantelli lemma, since the probability of invoking the generator $\sigma$ in one move is $\frac{1}{5}$ and we make infinitely many moves, with probability $1$ we will add or remove a new $\obj$ infinitely many times. Since initially we have finitely many $\objs$ and we cannot remove an $\obj$ without adding it first, this implies that as we go onwards from $E_0$ we will almost surely add a new $\obj$ infinitely many times. 
		
		The trajectory of any $\obj$ can be described as a (possibly terminated) simple random walk on the graph $X$. By Lemma \ref{gpp4} its Green's function $G$ satisfies $G(p,p)=4$, meaning that a random walk starting at $p$ visits $p$ four times on average, including the starting position. From Lemma 1.13 (a) in \cite{woess} it follows that an $\obj$ has a $\frac{1}{4}$ chance of never returning back to $p$. Applying the Borel-Cantelli argument again, we see that an infinite number of $\objs$ will never be removed.
		 
		Since each hair of $X$ is recurrent, any $\obj$ will converge to some end of the binary skeleton. Lemma \ref{walkx2} together with the Borel-Cantelli lemma (again!) show that for any $\varepsilon > 0$ sooner or later some $\obj$ will converge to an end of the graph such that its trajectory $\{q_n\}$ will satisfy $\lim\limits_{n \rightarrow \infty} \varphi(q_n) < \varepsilon$. In this case, $\lim\limits_{n \rightarrow \infty} f(E_n) < \varepsilon$ and, since  $\varepsilon$ is arbitrary, $\lim\limits_{n \rightarrow \infty} f(E_n) = 0$.

		Now assume $f$ is not a potential. Then let $f(E)=g(E)+h(E)$ be the Riesz decomposition of $f$, where $g$ is a potential and $h$ is a positive bounded harmonic function. Since $g$ is non-negative, by the squeeze theorem $\lim\limits_{n \rightarrow \infty} h(E_n) = 0$. On the other hand, by harmonicity of $h$, $\mathbb{E} h(E_n) = h(E_0)>0 $ and by dominated convergence theorem $\lim\limits_{n \rightarrow \infty} h(E_n)$ exists and satisfies $\mathbb{E}(\lim\limits_{n \rightarrow \infty} h(E_n)) = \lim\limits_{n \rightarrow \infty} \mathbb{E}h(E_n) = h(E_0) > 0$. Contradiction.		
	\end{proof}


\begin{thebibliography}{AA99}
	
	
	
	\bibitem{woess}\textsc{Wolfgang Woess}, {\it Random Walks on Infinite Graphs and Groups (Cambridge Tracts in Mathematics)} 
	
	
		\bibitem{burillo}\textsc{Jos\'e Burillo}, {\it Introduction to Thompson's group F} \hfil {\tt https://mat-web.upc.edu/people/pep.burillo/F\%20book.pdf}
		
		\bibitem{cfp}\textsc{J.W. Cannon, W.J. Floyd, W.R. Parry}, {\it Introductory notes on Richard Thompson's groups} \hfil {\tt http://people.math.binghamton.edu/matt/thompson/cfp.pdf}
	
		\bibitem{yeow} \textsc{Daniel Yeow}, {\it Introduction to Thompson's group F (Honours Thesis)} \hfil {\tt http://www.danielyeow.com/wp-content/uploads/2009/06/ honoursthesisfinal.pdf}
		
	
	\bibitem{bcw}\textsc{J. Burillo, S. Cleary, B. Wiest } {\it Computational explorations in Thompson's group F}{\tt https://arxiv.org/pdf/math/0506346.pdf}	
	
	\bibitem{akhmedov}\textsc{Azer Akhmedov}, {\it Non-amenability of R.Thompson's group F} {\tt https://arxiv.org/abs/0902.3849}
	
	\bibitem{shavg}\textsc{E.T.Shavgulidze}, {\it About amenability of subgroups of the group of diffeomorphisms of the interval} {\tt https://arxiv.org/abs/0906.0107}
	
	
	\bibitem{JMMS}\textsc{Juschenko, K., Matte Bon, N., Monod, N., de la Salle, M.}, {\it Extensive amenability and an application to interval exchanges.} arXiv preprint arXiv:1503.04977.
	
	\bibitem{JdlS}\textsc{Juschenko, K., de la Salle, M.}, {\it Invariant means for the wobbling group}.
	
	\bibitem{J-book}\textsc{Juschenko, K.}, {\it Amenability}. Book in preparation. \hfil {\tt http://www.math.northwestern.edu/\string~juschenk/book.html}.
	
	
	\bibitem{northshield}\textsc{Sam Northshield}, {\it Amenability and Superharmonic Functions} \hfil {\tt https://digitalcommons.plattsburgh.edu/mathematics\_facpubs/19/}

	
	\bibitem{notstram}\textsc{Y. Hartman, K. Juschenko, O. Tamuz, P. V. Ferdowsi}. {\it Thompson's group F is not strongly amenable} \hfil {\tt https://arxiv.org/abs/1607.04915}
	
	\bibitem{savchuk}\textsc{Dmytro Savchuk}. {\it Some graphs related to Thompson's group F} \hfil {\tt https://arxiv.org/abs/0803.0043}
	

	

\end{thebibliography}
\end{document}